\numberwithin{equation}{section}
\newtheorem{remark}[theorem]{Remark}
\newcommand{\bw}{\pmb{w}}
\newcommand{\bx}{\pmb{x}}
\newcommand{\bU}{\pmb{U}}
\newcommand{\bL}{\pmb{L}}
\newcommand{\bH}{\pmb{H}}
\newcommand{\bff}{\pmb{f}}
\newcommand{\bu}{\pmb{u}}
\newcommand{\baru}{\overline{u}}
\newcommand{\dtildeu}{\widetilde{\widetilde{u}}}
\newcommand{\ih}{\mathfrak{h}}
\newcommand{\iC}{\mathcal{C}}
\newcommand{\mR}{\mathbb{R}}
\newcommand{\ahalf}{\sfrac{1}{2}}
\definecolor{OliveGreen}{rgb}{0,0.6,0}
\title{High Order Explicit Local Time-Stepping Methods For Hyperbolic Conservation Laws\footnotemark[1]}
\author{Thi-Thao-Phuong Hoang\footnotemark[2]
\and Lili Ju\footnotemark[3]\ 
\and Wei Leng\footnotemark[4]\ 
\and Zhu Wang\footnotemark[3]}
\begin{document}

\maketitle

\renewcommand{\thefootnote}{\fnsymbol{footnote}}
\footnotetext[1]{This work is partially supported by US Department of Energy under grant number DE-SC0016540 and US National Science Foundation under grant number DMS-1818438.}
\footnotetext[2]{Department of Mathematics and Statistics, Auburn University, AL 36849, USA. Email: \texttt{tzh0059@auburn.edu}.}
\footnotetext[3]{Department of Mathematics, University of South Carolina, Columbia, SC 29208, USA. Email: \texttt{ju@math.sc.edu}, \texttt{wangzhu@math.sc.edu}.}
\footnotetext[4]{State Key Laboratory of Scientific and Engineering Computing, Chinese Academy of Sciences, Beijing 100190, China. Email: \texttt{wleng@lsec.cc.ac.cn}.}

\renewcommand{\thefootnote}{\arabic{footnote}}

\begin{abstract}
In this paper we present and analyze a general framework for constructing high order explicit local time 
stepping (LTS) methods for hyperbolic conservation laws. In particular, we consider the model problem 
discretized by Runge-Kutta discontinuous Galerkin (RKDG) methods and design LTS algorithms based on 
strong stability preserving Runge-Kutta (SSP-RK) schemes, that allow spatially variable time step sizes to 
be used for time integrations in different regions. The proposed algorithms are of predictor-corrector type, 
in which the interface information along the time direction is first predicted based on the SSP-RK 
approximations and Taylor expansions, and then the fluxes over the region of interface are corrected to 
conserve mass exactly at each time step. Following the proposed framework, we detail the corresponding 
LTS schemes with accuracy up to the fourth order, and prove their conservation property and nonlinear 
stability for the scalar conservation laws. Numerical experiments are also presented to demonstrate 
excellent performance of the proposed LTS algorithms. 
\end{abstract} 
\begin{keywords} Conservation laws, explicit local time-stepping, discontinuous Galerkin, strong stability preserving Runge-Kutta, total variation bounded
\end{keywords}

\begin{AMS}
65M20, 65L06,  65M12
\end{AMS}

\pagestyle{myheadings}

\section{Introduction} 
%
%
%
%
Numerical methods for hyperbolic conservation laws are a subject of great interest and importance as these laws are extensively used for modeling a wide range of physical phenomena such as gas dynamics, shallow water flow, advection of contaminants, traffic flows, etc. It is well known that these problems are often highly nonlinear and may have discontinuous solutions with sharp and moving fronts/shocks. To obtain  accurate and stable numerical solutions to hyperbolic conservation laws, it is popular to use conservative high resolution methods in space together with explicit time stepping. Examples of such spatial discretization include the MUSCL (monotonic upwind scheme for conservation laws) \cite{Leer79}, the ENO (essentially nonoscillatory) and WENO (weighted ENO) schemes \cite{HO87, HJCP87, Liu94, Shu96}, and the RK-DG (Runge-Kutta discontinuous Galerkin) methods \cite{CSI, CSII, CSIII, CSIV}. Note that to guarantee numerical stability, the time step size needs to satisfy the CFL condition, which is determined by the spatial mesh size and wave speed.  The use of local spatial refinements is efficient in resolving the sharp, moving fronts. However, as the CFL condition needs to hold everywhere, the step size for time integration would be controlled by the smallest cell size, or by the highest wave speed, which certainly increases the computational cost as a small time step size has to be used globally. Thus, to improve computational efficiency, the global CFL condition could be replaced by a local one so that the different time step sizes can be used in different regions: smaller time step sizes where the mesh is fine or the wave speed is high, and larger time step sizes where the mesh is coarse or the wave speed is low. \vspace{1pt}

Explicit local time-stepping (LTS) algorithms have a long tradition. To the best of our knowledge, the first LTS algorithm for hyperbolic conservation laws was introduced in \cite{Osher83} for one-dimensional scalar case based on the forward Euler method in time. It is of predictor-corrector type and is first order accurate in both space and time. Extension to high resolution schemes with slope limiters for advection equations was presented in~\cite{Dawson95}, and to second order in time for hyperbolic conservation laws in~\cite{Dawson01}. The numerical results on two-dimensional test problems confirm that these LTS schemes are very competitive to the global time-stepping with respect to the accuracy in time. The application of LTS schemes to the shallow water equations was investigated in \cite{Sanders08} with a Godunov-type finite volume discretization in space and later in \cite{Dawson12} using the RK-DG finite element methods. Note that the LTS scheme in~\cite{Sanders08} is only first order accurate in time, while the one in \cite{Dawson12} is second order accurate in time on regions away from the LTS interface but its accuracy degrades to first order at the interface.  
The LTS scheme in \cite{Dawson12} is based on the second-order strong stability preserving Runge-Kutta (SSP-RK) method, which is also known as a total variation diminishing (TVD) method introduced in \cite{Shu87a, Shu89}. Higher order RK-based explicit LTS methods were introduced for conservation laws in \cite{Kr10,Amaster} and for wave propagation in \cite{Grote15}. In~\cite{NERK09}, a space-time fully adaptive multi-resolution method based on natural continuous extensions for RK methods was proposed, whose accuracy is  of second order in both space and time. 
Other works related to LTS include the adaptive mesh refinement (AMR) method \cite{Berger84, Berger98}, the multirate time-stepping method \cite{Sandu07,Sandu09} and the Implicit-Explicit (IMEX) based LTS methods \cite{Helmig15, Gupta16}. Among them, the AMR method involves the refinement in both space and time, i.e., small time step sizes are taken on the refined mesh and large time step sizes on the coarse mesh. It is different from our approach in the way that refined grids are placed over regions of the coarse grid and information is exchanged between the grids by means of injection and interpolation. The multirate time-stepping method allows different time step sizes in different regions but it requires buffer regions to accommodate the time scale transition between regions. An overview of LTS techniques over the last two decades can be found in \cite{GH13Review}.\vspace{1pt}

In \cite{HJCP19}, inspired by the first order predictor-corrector scheme in \cite{Osher83}, we have designed  conservative  second and third order explicit LTS algorithms, incorporating with SSP-RK, for the rotating shallow water equations. The model is discretized in space by a C-grid staggering finite volume method, namely the TRiSK scheme~\cite{Todd2009, Todd2010}, on orthogonal primal and dual meshes. Numerical results with parallel implementation show excellent performance of the LTS algorithms in terms of stability, accuracy, efficiency and scalability. 
In this work, we extend the approach to construct, in a systematic way, a framework of high order LTS algorithms for hyperbolic conservative laws. In order to derive high order LTS algorithms, the key idea is to find high order approximations on the interface at intermediate time levels to handle the coupling between coarse and fine time steppings. Our proposed schemes are also of predictor-corrector type: 
we derive the predictors based on Taylor series expansions of the solution at the current time level and the SSP-RK stepping algorithms at each intermediate time level. Our approach thus is different from the one proposed in~\cite{Kr10,Amaster} where the predictors are based on RK time-stepping and interpolating polynomials. We present up to fourth order predictors within this framework, and show that the proposed LTS schemes preserve the accuracy in time over the entire domain. Concerning the corrector, it is designed to balance the fluxes from the regions with small time step sizes to the ones with large time step sizes. As high order SSP-RK methods consist of multiple stages, the fluxes at the same stage are accumulated over all the intermediate time levels to update the interface solution associated with that stage. As a consequence, the total mass is well conserved, though the corrector is no longer convex combinations of forward Euler steps as in the global SSP-RK methods. Nevertheless, we rigorously prove that the proposed LTS schemes for scalar conservation laws are total variation bounded (TVB). Such nonlinear stability is a crucial feature of any effective numerical method for hyperbolic conservation laws because it guarantees that the schemes can capture moving shocks without introducing nonphysical oscillations. Various numerical experiments are carried out to validate the accuracy, conservation and stability of our LTS schemes. Since  time advancement of the simulations in the fine regions and in the coarse ones can be implemented in parallel (this will be discussed further in Section~\ref{sec:LTS}), the proposed LTS schemes preserve the natural parallelism of explicit stepping schemes. 

Consider  the initial value problem for hyperbolic conservation laws: \vspace{-0.2cm}
\begin{equation} \label{eq:model}
\begin{array}{rll}
\frac{\partial \bu }{\partial t} + \sum_{i=1}^{d} \frac{\partial \bff_{i}}{\partial x_{i}}(\bu) &\hspace{-0.3cm}= \pmb{0}, & \text{on} \; \mR^{d} \times (0,T), \\
\bu(\bx, 0) & \hspace{-0.3cm}=\bu_{0}(\bx), & \text{in} \; \mR^{d},
\end{array} \vspace{-0.2cm}
\end{equation}
where $\bu(\bx) := \left (u_{1}(\bx), \ldots, u_{m}(\bx)\right )$ is an $m-$dimensional vector of unknowns and each flux function $ \bff_{i}: \mR^{m}  \longrightarrow \mR^{m}$ defined by
\begin{equation*}
\begin{array}{rcl} 
\bu  &\mapsto &\bff_{i}(\bu) := \left (f_{i1}(\bu), \ldots, f_{im}(\bu)\right ), 
\end{array}
\end{equation*}
is vector-valued and is of $m$ components. Since we focus on the time discretization techniques in this paper, we shall only consider the one-dimensional case, $d=1$. In particular, our model problem is the following scalar hyperbolic conservation law, resulting from  the system~\eqref{eq:model} with $d=m=1$:
\begin{equation} \label{eq:CL}
\begin{array}{rll}
\frac{\partial u}{\partial t} + \frac{\partial}{\partial x} f(u)& \hspace{-0.3cm}= 0, & \text{on} \; \mR \times (0,T), \vspace{4pt}\\
u(x,0) & = \;u_{0}, & \text{in} \; \mR.
\end{array} 
\end{equation}
We shall construct and analyze high order Runge-Kutta discontinuous Galerkin algorithms with local time-stepping for \eqref{eq:CL}. The proposed LTS algorithms can be straightforwardly extended to the case of one-dimensional systems of conservation laws ($m>1$), which will be presented in the numerical results, as well as to the higher dimensional problems ($d>1$). 

The rest of this paper is structured as follows. In Section~\ref{sec:RK-DG} we briefly introduce the RK-DG methods for scalar conservation laws~\eqref{eq:CL}.
High order LTS algorithms are carefully derived in Section~\ref{sec:LTS}, and their conservation and stability properties are then proved in Section~\ref{sec:anal}. Numerical results for various test cases are given in Section~\ref{sec:NumRe} to demonstrate the performance of the proposed LTS schemes. Additionally, coefficients of the SSP-RK methods used in the paper are given in Appendix~\ref{SSPRKappend}, and detailed derivation of the predictors for the proposed LTS schemes is  presented in Appendix~\ref{PREDappend}.
%
%
%
%
%
%
%
%
%
%
%
%
\section{Runge-Kutta discontinuous Galerkin methods} \label{sec:RK-DG}
We first introduce the RK-DG methods and refer to \cite{CSII} for a complete presentation of the methods. 
Within the framework of RK-DG, we first discretize equation~\eqref{eq:CL} in space by the discontinuous Galerkin method, then integrate it in time by SSP-RK schemes, and finally apply a slope limiter to achieve stable and high order accurate numerical solutions.
\subsection{Spatial discretization by the discontinuous Galerkin}
Assume a partition of the real line $\mR$ to have the $j$-th intervals as $I_{j}=\left (x_{j-\ahalf}, x_{j+\ahalf}\right )$ and define
$\Delta_{j}=x_{j+\ahalf}-x_{j-\ahalf}$ and $h=\max_{j}\Delta_{j}.$ Let $V_{h}$ be the finite dimensional space consisting of discontinuous, piecewise polynomial functions:
\begin{equation*}
V_{h}=V_{h}^{k}=\left \{ v \in L^{1}(\mR): v\mid_{I_{j}} \in \mathcal{P}^{k}(I_{j}), \; \forall\, j\right \} \; \not \subset H^{1}(\mR),
\end{equation*}
where $\mathcal{P}^{k}(I_{j})$ is the space of polynomials of degree at most $k$ on $I_{j}$. Consider a weak formulation of \eqref{eq:CL} obtained from testing it by any function $v_{h} \in V_{h}$ over $I_{j}$: 

\noindent
\hspace{1mm}
\mbox{For a.e. $ t \in (0,T) $, find $ u_{h}(t) \in V_{h}$ such that: $\forall\, j$ and  $\forall\, v_{h} \in V_{h}$}
\begin{equation}  \label{eq:variational}
\begin{array}{l}
\int_{I_{j}} \partial_{t} u_{h} (x,t) \, v_{h}(x) \, dx - \int_{I_{j}} f(u_{h}(x,t)) \,  \partial_{x} v_{h}(x) \, dx \vspace{0.1cm} \\
\hspace{1.5cm} + h(u_{h})_{j+\ahalf}(t)\, v_{h}(x_{j+\ahalf}^{-}) - h(u_{h})_{j-\ahalf}(t)\, v_{h}(x_{j-\ahalf}^{+})  = 0,  \vspace{0.2cm}\\
\int_{I_{j}} u_h(x,0) v_{h}(x) \, dx  = \int_{I_{j}} u_{0}(x) v_{h}(x) \, dx.
\end{array}
\end{equation}
Note that we have replaced the nonlinear flux $f(u(x_{j+\ahalf},t))$ in \eqref{eq:variational} by a \emph{Lipschitz, consistent, monotone numerical flux} $h(u)_{j+\ahalf}(t)$ which depends on the two values of~$u$ at $x_{j+\ahalf}$: 
$$ h(u)_{j+\ahalf}(t)=h\left (u(x_{j+\ahalf}^{-}, t), u(x_{j+\ahalf}^{+}, t)\right ).
$$
The numerical flux $h(\cdot, \cdot)$ is required to satisfy the following properties: 
i) locally Lipschitz continuous; 
ii) consistent with the flux $f$, that is, $h(u,u)=f(u)$; and 
iii) nondecreasing in the first argument and nonincreasing in the second argument. 
Examples of such a flux include the Godunov flux, Engquist-Osher flux, Lax-Friedrichs flux and Roe flux. 

A local orthogonal basis of $V_{h}$ consists of functions $\varphi_{j}^{(l)}$ defined as, for any $j$, 
\begin{equation*}
\varphi_{j}^{(l)} := P_{l}\left (\frac{2(x-x_{j})}{\Delta_{j}} \right ), \quad \text{for } l=0,1, \ldots, k,
\end{equation*}
in which $P_{l}$ is the Legendre polynomial of degree $l$ and $x_{j}$ is the middle point of $I_{j}$. Consequently, the approximate solution $u_{h}$ is expressed uniquely as \vspace{-0.2cm}
\begin{equation} \label{eq:uh}
u_{h}(x,t) = \sum_{l=0}^{k} u_{j}^{(l)}(t) \, \varphi_{j}^{(l)} (x), \quad \text{for} \; x \in I_{j},
\end{equation}
where the degrees of freedom $u_{j}^{(l)}(t)$ are determined by
\begin{equation*}
u_{j}^{(l)}(t):=\frac{2l+1}{\Delta_{j}} \int_{I_{j}} u(x,t) \varphi_{j}^{(l)}(x) \, dx, \quad \text{for } l=0,1, \ldots, k.
\end{equation*}
Note that $u_{j}^{(0)}$ is the cell average of $u$ in $I_{j}$. By taking $v_{h}=\varphi_{j}^{(l)}$ in~\eqref{eq:variational}, we obtain the following ODE for $u_{j}^{(l)}$ for any $j$: 
\begin{equation} \label{eq:ODE}
\begin{array}{l}
\left (\frac{1}{2l+1}\right )\frac{d u_{j}^{(l)}(t)}{dt}-\frac{1}{\Delta_{j}} \int_{I_{j}} f(u_{h}(x,t)) \, \partial_{x} \varphi_{j}^{(l)} \, dx \vspace{0.1cm} \\
\hspace{1cm} +\frac{1}{\Delta_{j}} \left [ h(u_{h})_{j+\ahalf}(t) - (-1)^{l} h(u_{h})_{j-\ahalf}(t) \right ] = 0, \quad \forall\, l=0,1, \ldots, k,
\end{array}
\end{equation}
with the initial condition
\begin{equation*}
u_{j}^{(l)}(0)  = \frac{2l+1}{\Delta_{j}} \int_{I_{j}} u_{0}(x) \varphi_{j}^{(l)}(x) \, dx.
\end{equation*}
Note that in \eqref{eq:ODE} we have used the following properties of Legendre polynomials:  
$$\varphi_{j}^{(l)}(x_{j+\ahalf}^{-}) =P_{l}(1)=1, \quad \varphi_{j}^{(l)}(x_{j-\ahalf}^{+})=P_{l}(-1)=(-1)^{l}.  $$ 
The numerical flux $h$ is computed by 
 $h(u_{h})_{j+\ahalf}(t)=h\left (u_{j+\ahalf}^{-}(t), u_{j+\ahalf}^{+}(t)\right ),$
where $u_{j+\ahalf}^{\pm}(t)=u_{h}(x_{j+\ahalf}^{\pm},t)$ are defined by 
\begin{equation*} \label{eq:Uedge}
u_{j+\ahalf}^{-}(t) = \sum_{l=0}^{k} u_{j}^{(l)}, \quad  u_{j-\ahalf}^{+}(t)  = \sum_{l=0}^{k} (-1)^{l} u_{j}^{(l)}. 
\end{equation*}
Approximating the integral in \eqref{eq:ODE} by Gauss-Lobatto quadrature rules that involve the two endpoints of the interval yields (using the definition of $u_{h}$ in \eqref{eq:uh}):
\begin{equation*} \label{eq:GLquad}
\int_{I_{j}} f(u_{h}) \, \partial_{x} \varphi_{j}^{(l)} \, dx = \int_{I_{j}} f\left (u_{j-\ahalf}^{+}, (u_{j}^{(l)})_{0\leq l \leq k}, u_{j+\ahalf}^{-}\right ) \, \partial_{x} \varphi_{j}^{(l)} \, dx. \vspace{-0.2cm}
\end{equation*}
The system of ODEs \eqref{eq:ODE} can be recast in an autonomous form as follows:
\begin{equation} \label{eq:auto}
\frac{d  \bU_{h} }{dt}= \bL_{h}(\bU_{h}), \quad \bU_{h}(0)=\bU_{h0},
\end{equation}
where $\bU_{h}=(\bu_{j})_{\forall\, j}$ with $\bu_{j}=(u_{j}^{(l)})_{l=0,\ldots,k}$, the right hand side \vspace{-0.1cm}
$$\bL_{h}(\bU_{h}) = \left (L_{h,j}^{(l)}(u_{j-\ahalf}^{\pm},\bu_{j}, u_{j+\ahalf}^{\pm})\right )_{\forall\, j, \, l=0,1,\ldots,k}, \vspace{-0.2cm}$$ with
\begin{equation} \label{eq:Lhj}
\begin{array}{ll}
 L_{h,j}^{(l)}(u_{j-\ahalf}^{\pm},\bu_{j}, u_{j+\ahalf}^{\pm})=&\hspace{-0.2cm}\frac{2l+1}{\Delta_{j}} \bigg \{ \int_{I_{j}} f\left (u_{j-\ahalf}^{+}, \bu_{j}, u_{j+\ahalf}^{-}\right ) \, \partial_{x} \varphi_{j}^{(l)} \, dx \vspace{3pt}\\
 &- \left [ h(u_{j+\ahalf}^{-},u_{j+\ahalf}^{+}) - (-1)^{l}  h(u_{j-\ahalf}^{-},u_{j-\ahalf}^{+})\right ] \bigg \},
\end{array} \hspace{-0.6cm}
\end{equation}
 and the initial data $\bU_{h0}=\Big[\sfrac{(2l+1)}{\Delta_{j}} \int_{I_{j}} u_{0}(x) \varphi_{j}^{(l)}(x) \, dx\Big]_{\forall\, j, \, l=0,1,\ldots,k}$.
Next, we solve \eqref{eq:auto} explicitly in time by the SSP-RK methods \cite{SO88, GS98}. 

\subsection{Strong stability preserving Runge-Kutta time discretization} 
The SSP-RK methods have been proved to be effective for solving hyperbolic conservation laws with discontinuous solutions. Given a uniform partition of $(0,T)$, 
$0=t_{0} < t_{1} < \ldots < t_{N-1} < t_{N}=T$, 
with the time step size $\textstyle \Delta t = \sfrac{T}{N}$. The $s$-stage, $r$th-order SSP-RK methods, referred to as SSP-RK$(s,r)$, for solving the autonomous system~\eqref{eq:auto} read as follows:  
for $n=0, \ldots, N-1$, compute  \vspace{-0.1cm}
\begin{equation} \label{eq:TVDRK}
\bU_{h}^{n, (i)}=\sum_{\nu=0}^{i-1} \alpha_{i\nu} \bU_{h}^{n, (\nu)} + \beta_{i\nu} \Delta t \bL_{h}(\bU_{h}^{n, (\nu)}), \quad \forall\, i=1, \ldots, s,
\end{equation}
where $\bU_{h}^{n, (0)}=\bU_{h}^{n}$ and set  
$\bU_{h}^{n+1}=\bU_{h}^{n, (s)}$.  
It is required that all the weights  $\alpha_{i\nu}, \beta_{i\nu} \geq 0$.
To measure stability of RK-DG methods, we denote by $\bu^{(l),n} = (u_{j}^{(l),n})_{\forall j}$ and define the total variation of numerical solutions by 
\begin{equation*}
TV(\bu^{(l),n})=\sum_{j} \left \vert u_{j+1}^{(l),n} - u_{j}^{(l),n} \right \vert, \quad \forall\,  l=0, \ldots, k, \; \text{and} \; n=0, \ldots, N-1.
\end{equation*}
A numerical method is {\em total variation diminishing (TVD)} if  
\begin{equation*}
TV(\bu^{(l), n+1}) \leq  TV(\bu^{(l),n}), \quad \forall\,  l=0, \ldots, k \; \text{and} \; n=0, \ldots, N-1, 
\end{equation*}
and is {\em total variation bounded (TVB)} if 
\begin{equation*}
TV(\bu^{(l), n+1}) \leq  TV(\bu^{(l),0}) + BT, \quad \forall\,  l=0, \ldots, k \; \text{and} \; n=0, \ldots, N-1,
\end{equation*}
for some constant $B$ independent of the time step size. The stability of the SSP-RK schemes is given by the following lemma. 
\begin{lemma}[\cite{GS98}] If the forward Euler method $\; \bU_{h}^{n+1}=\bU_{h}^{n} + \Delta t \bL_{h}(\bU_{h}^{n})$
is TVD under the CFL condition $\Delta t \leq \Delta t_{FE} $, then the SSP-RK$(s,r)$ scheme \eqref{eq:TVDRK} is TVD under the modified CFL condition:
$\Delta t\leq \iC \Delta t_{FE}, $
where $\iC:= \min_{i,\nu}\frac{\alpha_{i\nu}}{ \beta_{i\nu} }$ is the SSP coefficient.
\end{lemma}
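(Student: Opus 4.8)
The plan is to adapt the classical convex-combination argument of Shu and Osher. Throughout, $TV(\cdot)$ is applied to each DG mode $\bu^{(l)}=(u_{j}^{(l)})_{\forall j}$ of its vector argument, and ``TVD'' means the corresponding inequality holds for every $l=0,\ldots,k$. The first step is to rewrite every intermediate stage of \eqref{eq:TVDRK} as a convex combination of forward Euler updates. Consistency of the Runge--Kutta scheme forces $\sum_{\nu=0}^{i-1}\alpha_{i\nu}=1$ for each $i=1,\ldots,s$; since in addition all $\alpha_{i\nu},\beta_{i\nu}\ge 0$, the $i$th stage can be written, for the indices with $\alpha_{i\nu}>0$, as
\[
\bU_{h}^{n,(i)}=\sum_{\nu=0}^{i-1}\alpha_{i\nu}\left(\bU_{h}^{n,(\nu)}+\frac{\beta_{i\nu}}{\alpha_{i\nu}}\,\Delta t\,\bL_{h}(\bU_{h}^{n,(\nu)})\right),
\]
where the parenthesized quantity is exactly one forward Euler update of $\bU_{h}^{n,(\nu)}$ with effective step size $(\beta_{i\nu}/\alpha_{i\nu})\Delta t$. (Indices with $\beta_{i\nu}=0$ contribute the plain term $\alpha_{i\nu}\bU_{h}^{n,(\nu)}$ and impose no restriction on $\Delta t$; for the SSP coefficient $\iC$ to be meaningful one tacitly assumes $\alpha_{i\nu}=0\Rightarrow\beta_{i\nu}=0$, so no summand with $\alpha_{i\nu}=0$ and $\beta_{i\nu}>0$ occurs.)

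Next I would invoke the hypothesis on forward Euler. Imposing $\Delta t\le\iC\,\Delta t_{FE}$ with $\iC=\min_{i,\nu}\alpha_{i\nu}/\beta_{i\nu}$ is precisely the requirement that $(\beta_{i\nu}/\alpha_{i\nu})\,\Delta t\le\Delta t_{FE}$ for every relevant pair $(i,\nu)$, so each embedded forward Euler step satisfies
\[
TV\!\left(\bU_{h}^{n,(\nu)}+\frac{\beta_{i\nu}}{\alpha_{i\nu}}\,\Delta t\,\bL_{h}(\bU_{h}^{n,(\nu)})\right)\le TV\!\left(\bU_{h}^{n,(\nu)}\right).
\]
Combining this with the subadditivity and positive homogeneity of the total-variation seminorm and with $\alpha_{i\nu}\ge0$, $\sum_{\nu}\alpha_{i\nu}=1$, one obtains the stage estimate $TV(\bU_{h}^{n,(i)})\le\sum_{\nu=0}^{i-1}\alpha_{i\nu}\,TV(\bU_{h}^{n,(\nu)})$.

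The proof then closes by induction on the stage index $i$. The base case is the trivial identity $TV(\bU_{h}^{n,(0)})=TV(\bU_{h}^{n})$; if $TV(\bU_{h}^{n,(\nu)})\le TV(\bU_{h}^{n})$ for all $\nu<i$, the stage estimate together with $\sum_{\nu}\alpha_{i\nu}=1$ gives $TV(\bU_{h}^{n,(i)})\le TV(\bU_{h}^{n})$. Setting $i=s$ and recalling $\bU_{h}^{n+1}=\bU_{h}^{n,(s)}$ yields $TV(\bU_{h}^{n+1})\le TV(\bU_{h}^{n})$ for every mode, which is the asserted TVD property under $\Delta t\le\iC\,\Delta t_{FE}$. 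The argument is elementary; the only point that needs genuine care is the coefficient bookkeeping of the first step --- checking that the rewriting as a convex combination of forward Euler steps remains valid when some $\alpha_{i\nu}$ or $\beta_{i\nu}$ vanish, and that the minimum defining $\iC$ ranges over exactly the pairs that actually constrain $\Delta t$.
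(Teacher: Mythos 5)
Your argument is correct and is exactly the classical convex-combination proof of Shu--Osher/Gottlieb--Shu, which is the proof in the reference \cite{GS98} that the paper cites for this lemma (the paper itself states the result without proof). The coefficient bookkeeping --- $\sum_{\nu}\alpha_{i\nu}=1$ from consistency, nonnegativity of the weights, the convention that pairs with $\beta_{i\nu}=0$ do not constrain $\iC$, and the induction over stages --- is handled correctly.
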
 
We present  some commonly used SSP-RK schemes such as SSP-RK(2,2), SSP-RK(3,3) and SSP-RK(5,4)  in detail in Appendix~\ref{SSPRKappend}.

\subsection{TVB corrected slope limiter} \label{subsec:limiter}
In order to handle moving shocks while preserving high order accuracy in smooth regions, we follow \cite{Shu87a} and define the TVB corrected \emph{minmod} function $\widetilde{m}$: \vspace{-0.1cm}
\begin{equation*} \label{eq:mminmod}
\widetilde{m}(a_{1}, \ldots, a_{\nu}) = \left \{ \begin{array}{ll} a_{1}, & \text{if} \; \vert a_{1} \vert \leq C_M h^{2}, \\
m(a_{1}, \ldots, a_{\nu}), & \text{otherwise}. 
\end{array} \right . 
\end{equation*}
where $C_M>0$ is a constant and $m$ is the usual \emph{minmod} function \cite{H84}: 
\begin{equation} \label{eq:minmod}
m(a_{1}, \ldots, a_{\nu}) = s \min_{1 \leq i \leq \nu} \vert a_{i} \vert, \;\; { \text{ with  } } s= \left \{ \begin{array}{ll} \text{sign}(a_1), & \text{if} \;  \text{sign}(a_{1})= \ldots  = \text{sign}(a_{\nu}), \\
0, & \text{otherwise}. 
\end{array} \right . 
\end{equation}
The corrected limiter leads to high order accuracy in any region where the solution is smooth, even at local extrema. The resulting scheme is no longer TVD, instead it is TVB.  Next, we define the $(k+1)$th-order limiter $\Lambda \Pi^{k}_{h}$ as in \cite{C01}. 
When $k=1$, we have  
\begin{equation*}
\Lambda \Pi^{1}_{h} (u_{h})\vert_{I_{j}} = u_{j}^{(0)} + \widetilde{m}(u_{j}^{(1)}, u_{j+1}^{(0)}-u_{j}^{(0)}, u_{j}^{(0)}-u_{j-1}^{(0)}) \varphi_{j}^{(1)}(x).  
\end{equation*}
For $k>1$, we first compute  
\begin{equation*}
\begin{array}{ll}
u_{j+\ahalf}^{- \text{(mod)}} &\hspace{-0.2cm} = u_{j}^{(0)} + \widetilde{m}(u_{j+\ahalf}^{-}-u_{j}^{(0)}, u_{j+1}^{(0)}-u_{j}^{(0)}, u_{j}^{(0)}-u_{j-1}^{(0)}), \vspace{3pt}\\
u_{j-\ahalf}^{+ \text{(mod)}} &\hspace{-0.2cm} = u_{j}^{(0)} - \widetilde{m}(u_{j}^{(0)}-u_{j-\ahalf}^{+}, u_{j+1}^{(0)}-u_{j}^{(0)}, u_{j}^{(0)}-u_{j-1}^{(0)}),
\end{array} \vspace{-0.1cm}
\end{equation*}
then define
\begin{equation*}
\Lambda \Pi^{k}_{h} (u_{h})\vert_{I_{j}} = \left \{ \begin{array}{ll} u_{h}\vert_{I_{j}}, & \text{if $u_{j+\ahalf}^{- \text{(mod)}} = u_{j+\ahalf}^{-}$ and $u_{j-\ahalf}^{+ \text{(mod)}}=u_{j-\ahalf}^{+}$}, \vspace{0.2cm}\\
\Lambda \Pi^{1}_{h} (u_{h})\vert_{I_{j}}, & \text{otherwise}. 
\end{array} \right .
\end{equation*}
We finally make the following notation  \vspace{-0.2cm}
$$u_{h}^{\text{(mod)}} \vert_{I_{j}} := \Lambda \Pi^{k}_{h} (u_{h})\vert_{I_{j}} = \sum_{l=0}^{k} u_{j}^{(l) \text{(mod)}} \varphi_{l},$$
and \vspace{-0.1cm}
$$ \Lambda \Pi_{h}^{k}(\bU_{h}) := \bU_{h}^{\text{(mod)}} = \left [u_{j}^{(l) \text{(mod)}}\right ]_{\forall\, j, \, l=0,1, \ldots, k}. $$
The complete RK-DG method with the TVB {\em minmod} limiter is given in Algorithm~\ref{algo}, in which $r=(k+1)$ to match the accuracy in space and in time, and $s \geq r$ is the number of stages in SSP-RK.
\begin{algorithm}\small
		\caption{Runge-Kutta local projection discontinuous Galerkin method}  \label{algo}
		\begin{algorithmic}[1] 
			\item Compute $\bU_{h}^{0 \text{(mod)}} = \Lambda \Pi_{h}^{k}(\bU_{h0})$.
			\item For each $n=0, 1, \ldots, N-1,$ 
			\begin{enumerate}
			\item Set $\bU_{h}^{n, (0) \text{(mod)}} = \bU_{h}^{n \text{(mod)}}$.
			\item For $i=1, \ldots, s$, compute the solution at stage $i$: 
			\begin{equation*}
			\bU_{h}^{n, (i) \text{(mod)}} = \Lambda \Pi_{h}^{k} \left (\sum_{\nu=0}^{i-1} \alpha_{i\nu} \bU_{h}^{n, (\nu) \text{(mod)}} + \beta_{i\nu} \Delta t \bL_{h}\left(\bU_{h}^{n, (\nu) \text{(mod)}}\right) \right ).
			\end{equation*}
			\item Set $\bU_{h}^{n+1 \text{(mod)}} = \bU_{h}^{n, (s) \text{(mod)}}$. 
			\end{enumerate}
		\end{algorithmic} 
	\end{algorithm} \vspace{-0.2cm}
%
%
%
%
%
%
%
%
%
%
%
%
\section{Local time stepping algorithms} \label{sec:LTS}
In this section, we present high order LTS algorithms incorporated with the RK-DG methods for conservation laws. Given the solution $\bU_{h}^{n \text{(mod)}}$ at $t^{n}$, possibly with moving shocks, we approximate the solution at $t^{n+1}$. To this end, we divide the domain into coarse and fine regions, and assume shocks only appear in the fine regions. This could be made possible by varying the LTS interfaces with time. Consequently, we can use spatially variable time steps: large step sizes in the coarse regions and small step sizes in the fine regions. 

For simplicity of presentation, we decompose the domain into a coarse region $\Omega_{c}^{n}$ and a fine region $\Omega_{f}^{n}$. Extension to more complicated configurations with multiple subdomains is straightforward. 
Denoted by $x_{j_{0}^{n}+\ahalf}$ the interface point at $t^{n}$, $\Omega_{c}^{n} = \left \{ I_{j}: \, j \leq j_{0}^{n} \right \}$ the coarse region, and $\Omega_{f}^{n}=\left \{ I_{j}: \, j \geq j_{0}^{n}+1 \right \}$ the fine region.  
As depicted in Figure~\ref{fig:lts}, we enforce a larger time step $\Delta t_{\text{coarse}}=\Delta t$ in $\Omega_{c}^{n}$ and a smaller time step $\Delta t_{\text{fine}}=\sfrac{\Delta t}{M}$ in $\Omega_{f}^{n}$. We remark that the coarse time increment must be a union of fine time increments: 
$$\left [t^{n},t^{n+1}\right )=\bigcup_{p=0}^{M-1} \left [t^{n,p},t^{n,p+1}\right ). \vspace{-0.1cm}$$
\begin{figure}[ht!]
\centering 
\includegraphics[scale=0.6]{./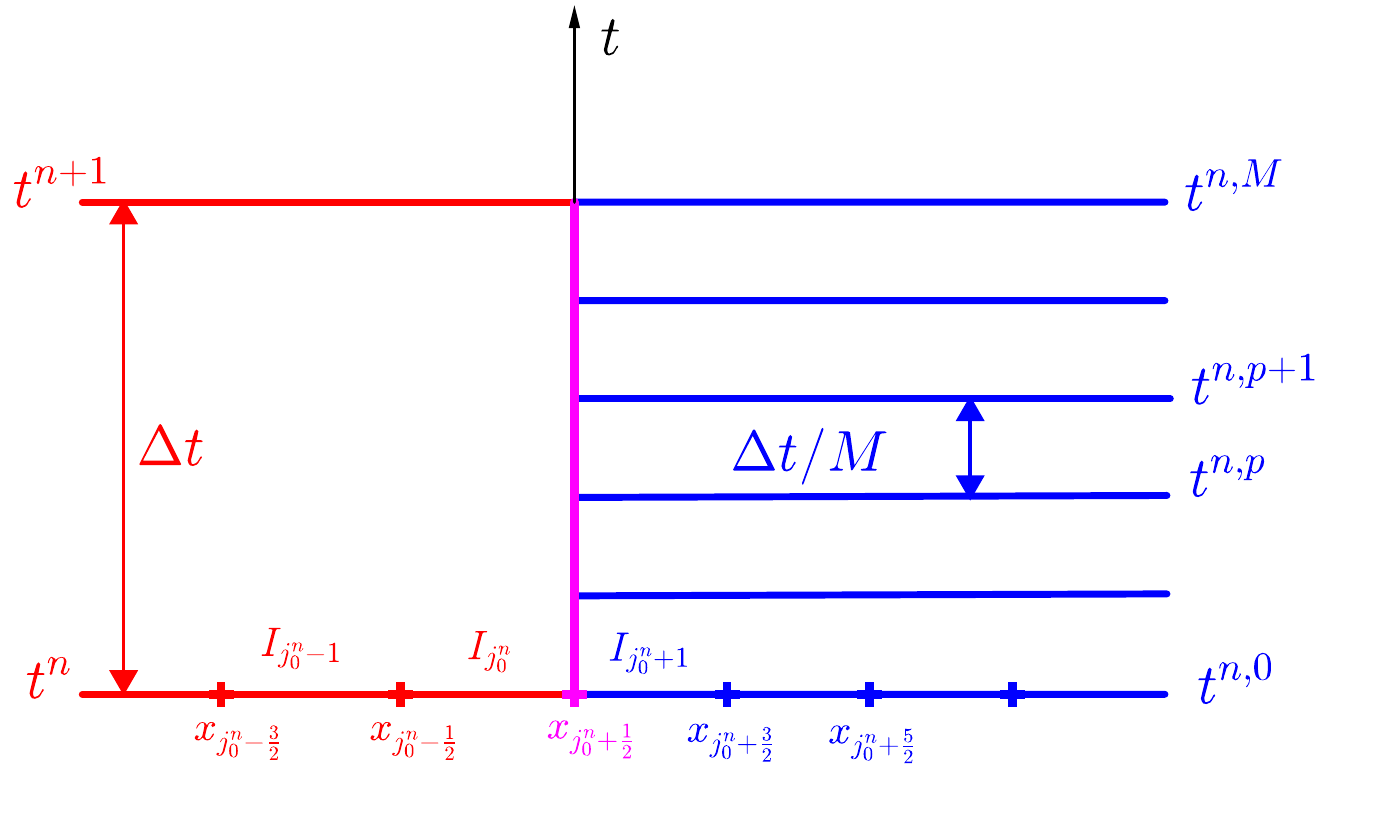} \vspace{-0.2cm}
\caption{Partition in time with local time-stepping.} \label{fig:lts}  \vspace{-0.4cm}
\end{figure}

To proceed in time in the fine region, one needs to find $(k+1)$th-order in time approximation of the flux at the interface at intermediate time levels $t^{n,p}$ for $p=1, \ldots, M-1$. This is obtained via a predictor based on $k$th-order Taylor expansions and the $(k+1)$th-order SSP-RK algorithm, assuming that the solution is smooth enough near the LTS interface. After advancing in the fine region to $t^{n+1}$, we will correct the flux at the interface in order to conserve mass exactly. The derivation of the predictors up to fourth order accuracy are presented in Appendix~\ref{PREDappend}. 
The proposed  LTS algorithm of order $(k+1)$ consists of the following three steps: 
%
%
%
%

\indent \textbf{Step 1: Predicting the interface values.} We first compute the solution of the first $(s-1)$ stages of the SSP-RK$(s,k+1)$ scheme on the interface cell $I_{j_{0}^{n}}$ with a coarse time step: 
\begin{equation*} \label{eq:predUcoarse}
\bu_{j_{0}^{n}}^{n,(i)}=\left (u_{j_{0}^{n}}^{(l), n, (i)}\right )_{\forall\, j, \, l=0, \ldots, k}, \quad  \forall\, i=1, \ldots, s-1. \vspace{-0.1cm}
\end{equation*}
It is important to note that we compute $\bu_{j_{0}^{n}}^{n,(i)}$ {\em locally} on $I_{j_{0}}$ by enforcing $u_{j_{0}^{n}-\ahalf}^{n,(i),-}=u_{j_{0}^{n}-\ahalf}^{n,(i),+}$ and $u_{j_{0}^{n}+\ahalf}^{n,(i),+}=u_{j_{0}^{n}+\ahalf}^{n,(i),-}$ in \eqref{eq:Lhj}. This is obtained under the assumption that the solution near the LTS interface is continuous (for $k \leq 1$) or sufficiently smooth (for $k>1$). Thus, limiter is not necessary in this case and we have
$\bu_{j_{0}^{n}}^{n,(i) \text{(mod)}}=\bu_{j_{0}^{n}}^{n,(i)}.$
We then use these values to predict the solution on the interface $x_{j_{0}^{n}+\ahalf}$ at intermediate time levels $t^{n,p}$: 
\begin{equation} \label{eq:predUedge}
u_{j_{0}^{n}+\ahalf}^{n,p,(i), - \text{(mod)}}=u_{j_{0}^{n}+\ahalf}^{n,p,(i), - } = \sum_{l=0}^{k} u_{j_{0}^{n}}^{(l),n,p,(i)}, \quad \forall\, p=0,1, \ldots, M-1, \vspace{-0.1cm}
 \end{equation} 
where $u_{j_{0}^{n}}^{(l), n,p,(i)}$ are computed by the formulas in Appendix~\ref{PREDappend}. In particular:\vspace{3pt}

\emph{For second order SSP-RK(2,2)}: \vspace{-0.1cm}
\begin{equation} \label{eq:2ndpredictor}
\begin{array}{ll}
u_{j_{0}^{n}}^{(l),n,p,(0)} = (1-\theta_{p}) u_{j_{0}^{n}}^{(l),n, (0)} + \theta_{p} u_{j_{0}^{n}}^{(l),n, (1)}, \vspace{0.1cm}\\
u_{j_{0}^{n}}^{(l),n,p, (1)} = (1-\eta_{p}) u_{j_{0}^{n}}^{(l),n, (0)} + \eta_{p} u_{j_{0}^{n}}^{(l),n, (1)},
\end{array} \vspace{-0.1cm}
\end{equation}
for $l=0,1,$ where $\textstyle \theta_{p}=\frac{p}{M}$ and $\textstyle \eta_{p}=\frac{p+1}{M}$ for $p=0, 1, \ldots, M-1$. \vspace{3pt}

\emph{For third order SSP-RK(3,3)}: 
\begin{equation}
\begin{array}{ll}
u_{j_{0}^{n}}^{(l),n,p, (0)} = (1-\theta_{p}-\widehat{\theta}_{p}) u_{j_{0}^{n}}^{(l),n, (0)} + (\theta_{p}-\widehat{\theta}_{p}) u_{j_{0}^{n}}^{(l),n, (1)} +2\widehat{\theta}_{p} u_{j_{0}^{n}}^{(l),n, (2)} , \vspace{0.1cm}\\
u_{j_{0}^{n}}^{(l),n,p, (1)} = (1-\eta_{p}-\widehat{\eta}_{p}) u_{j_{0}^{n}}^{(l),n, (0)} + (\eta_{p}-\widehat{\eta}_{p}) u_{j_{0}^{n}}^{(l),n, (1)} +2\widehat{\eta}_{p} u_{j_{0}^{n}}^{(l),n, (2)}, \vspace{0.1cm}\\
u_{j_{0}^{n}}^{(l),n,p, (2)} = (1-\gamma_{p}-\widehat{\gamma}_{p}) u_{j_{0}^{n}}^{(l),n, (0)} + (\gamma_{p}-\widehat{\gamma}_{p}) u_{j_{0}^{n}}^{(l),n, (1)}+2\widehat{\gamma}_{p} u_{j_{0}^{n}}^{(l),n, (2)},
\end{array} 
\end{equation}
for $l=0,1,2,$ with $\theta_{p}$ and $\eta_{p}$ as above, and $\textstyle\widehat{\theta}_{p}=\frac{p^{2}}{M^{2}}, \, \widehat{\eta}_{p}=\frac{p(p+2)}{M^{2}}, \; \gamma_{p}= \frac{2p+1}{2M} $ and $\textstyle\widehat{\gamma}_{p}=\frac{2p^{2}+2p+1}{2M^{2}}$ for $p=0, 1, \ldots, M-1$. \vspace{3pt}

\emph{For fourth order SSP-RK(5,4)}: we approximate $u_{j_{0}^{n}}^{(l),n,p, (i)}$, for $p=0, 1, \ldots, M-1$ as linear combinations of $u_{j_{0}^{n}}^{(l),n,(i)}$ for $l,i=0,\ldots, 4,$ as presented in Appendix~\ref{subsec:pRK4}. \vspace{0.3cm}\\
%
%
%
%
\indent \textbf{Step 2: Advancing in the coarse and fine regions in parallel.} \vspace{0.1cm}\\
Step 2a). Advancing the coarse region excluding the interface cell: with the solution at the current time level, we advance solution to the next time level by running the SSP-RK with a coarse time step. 
\vspace{0.1cm}

\noindent  For all the cells $I_j$ with $j < j_{0}^{n}$, we perform:  
\begin{enumerate}
\item For $i = 1, \hdots, s,$
\begin{equation} \label{eq:coarse}
\begin{array}{ll}
u_{j}^{(l),n, (i)} &\hspace{-0.2cm} =\sum_{\nu=0}^{i-1} \alpha_{i\nu} u_{j}^{(l),n,(\nu)} + \beta_{i\nu} \Delta t \, L_{h,j}^{(l)}\left (u_{j-\ahalf}^{n,(\nu),\pm},\bu_{j}^{n,(\nu)}, u_{j+\ahalf}^{n,(\nu),\pm}\right ).
\end{array}
\end{equation}
\item Set $\bu_{j}^{n+1}=\bu_{j}^{n, (s)}$ for all $j<j_{0}^{n}$. 
\end{enumerate}
%
%
{\em Step 2b). Advancing in the fine region:} with the predicted values on the interface, we evaluate the interface flux $h(u_{j_{0}^{n}+\ahalf}^{n,p,-},u_{j_{0}^{n}+\ahalf}^{n,p,+})$ at the intermediate time levels, and consequently obtain the solution $u_{j}^{n,p}$ for all the cells $I_j$ with $j > j_{0}^{n}$ in the fine region. The TVB limiter is performed to obtain $u_{j}^{n,p \text{(mod)}}$ for $j>j_{0}^{n}$ and the predicted values are updated on the interface after limiting. 
\vspace{0.1cm}

\noindent For all the cells $I_j$ with $j \geq j_{0}^{n}+1$, we perform: \vspace{0.1cm}

For $p=0, \ldots, M-1$, \vspace{0.1cm}
\begin{enumerate}
\item Set $u_{j}^{(l),n,p, (0) (mod)} = u_{j}^{(l),n,p (mod)} $, for $l=0, \ldots, k$.\vspace{0.2cm}
\item For $i=1, \ldots, s$,  we compute the solution at stage $i$: \vspace{-0.2cm}
\begin{equation*} \label{eq:fine}
\begin{array}{ll}
\hspace{1cm} u_{j}^{(l),n,p, (i)}\;=&\sum_{\nu=0}^{i-1} \alpha_{i\nu}u_{j}^{(l),n,p, (\nu) \text{(mod)}} \vspace{3pt}\\
& \hspace{-0.3cm}+ \beta_{i\nu} \left (\frac{\Delta t}{M}\right ) L_{h,j}^{(l)}\left (u_{j-\ahalf}^{n,p,(\nu),\pm \text{(mod)}},\bu_{j}^{n,p,(\nu)\text{(mod)}}, u_{j+\ahalf}^{n,p,(\nu),\pm \text{(mod)}}\right ), 
\end{array}
\end{equation*}
for $l=0, \ldots, k$.
If $p<M-1$, limit the solution in the fine region 
\begin{equation*}
\bu_{j}^{n,p, (i) \text{(mod)}} = \Lambda \Pi_{h}^{k}\left (\bu_{j^{\prime}\geq j_{0}^{n}}^{n,p, (i)}\right )\vert_{I_{j}},
\end{equation*} 
and update the predicted interface value $u_{j_{0}^{n}+\ahalf}^{n,p,(i), - \text{(mod)}}$ (cf. \eqref{eq:predUedge}) after limiting:
\begin{equation*}
\begin{array}{ll}
u_{j_{0}^{n}+\ahalf}^{n,p,(i), - \text{(mod)}}  \;= &u_{j_{0}^{n}}^{(0),n,p, (i)} + \widetilde{m}\Big(u_{j_{0}^{n}+\ahalf}^{n,p,(i), - \text{(mod)}}-u_{j_{0}^{n}}^{(0),n,p,(i)}, \vspace{3pt}\\
& \hspace{0.5cm}  u_{j_{0}^{n}+1}^{(0), n,p,(i)}-u_{j_{0}^{n}}^{(0),n,p,(i)}, u_{j_{0}^{n}}^{(0),n,p,(i)}-u_{j_{0}^{n}-1}^{(0),n, (i)}\Big).
\end{array} 
\end{equation*}

\item For all $j>j_{0}^{n}$, set:
\begin{equation*}
\left \{ \begin{array}{rll}
u_{j}^{(l),n,p+1 \text{(mod)}}& = u_{j}^{(l), n,p, (s) \text{(mod)}}, & \text{if} \; p<M-1, \vspace{3pt}\\
u_{j}^{(l),n+1}& = u_{j}^{(l), n,p, (s)}, & \text{if} \; p=M-1.
\end{array} \right .
\end{equation*}
\end{enumerate}   \vspace{0.2cm}
%
%
%
%

 \textbf{Step 3: Correcting the interface solution and limiting the global solution at $t^{n+1}$ locally.} 
With the predicted interface value $u_{j_{0}^{n}+\ahalf}^{n,p,(\nu),-}$, we calculate the flux at the interface $ x=x_{j_{0}^{n}+\ahalf}$. Together with the flux at $x=x_{j_{0}^{n}-\ahalf}$, which is frozen over $[t^{n}, t^{n+1})$, we correct the solution of the interface cell $I_{j_0^n}$. Finally, a TVB limiter is applied, which can be implemented in parallel as \cite{CS01JSC}, to limit the solution on $I_j, \, \forall j$ in which only information on elements sharing edges with $I_j$ is necessary.  
\begin{enumerate}
\item For $i=1, \ldots, s$, we compute the solution at stage $i$ at the interface:
\begin{equation} \label{eq:corrIF}
\begin{array}{ll}
\widehat{u}_{j_{0}^{n}}^{(l),n,(i)}\;=&\sum_{\nu=0}^{i-1} \alpha_{i\nu} \widehat{u}_{j_{0}^{n}}^{(l),n, (\nu)} \vspace{3pt}\\
& + \beta_{i\nu} \frac{\Delta t}{M}  \sum_{p=0}^{M-1} L_{h,j_{0}^{n}}^{(l)}\left (u_{j_{0}^{n}-\ahalf}^{n,(\nu),\pm \text{(mod)}},\bu_{j_{0}^{n}}^{n,(\nu)\text{(mod)}}, u_{j_{0}^{n}+\ahalf}^{n,p,(\nu),\pm \text{(mod)}}\right ),
\end{array} \hspace{-0.8cm}
\end{equation}
where $\widehat{u}_{j_{0}^{n}}^{(l),n, (0)}=u_{j_{0}^{n}}^{(l),n, (0) \text{(mod)}}$.
 \vspace{0.2cm}
\item Set $u_{j_{0}^{n}}^{(l),n+1}=\widehat{u}_{j_{0}^{n}}^{(l),n,(s)}$ and perform the limiter:
$\bU_{h}^{n+1 \text{(mod)}}= \Lambda\Pi_{h}^{k}(\bU_{h}^{n+1}).$
\end{enumerate}
%
%
%
%
%

\section{Properties of LTS schemes} \label{sec:anal}
First, we notice that the proposed LTS schemes preserve the accuracy in time of the corresponding global SSP-RK methods due to the construction of the predictor and the corrector (see also Remark~\ref{rmrk:pred}). In the following, we prove that the LTS schemes conserve mass exactly and importantly, they satisfy the TVB stability. 
\subsection{Conservation}
Mass conservation of the proposed LTS schemes is obtained via the construction of the corrector. For simplicity, we assume that the solutions are obtained after performing the limiter defined in Subsection~\ref{subsec:limiter} and write $u_{j}^{(l),n}$ for $u_{j}^{(l), n \text{(mod)}}$. 
\begin{theorem} \label{lmm:conv}
The LTS schemes exhibit exact conservation of mass:
\begin{equation*}
\int_{\mR} u_{h}^{n+1} = \int_{\mR} u_{h}^{n}, \quad \forall\, n=0, \ldots, N-1.
\end{equation*}
\end{theorem}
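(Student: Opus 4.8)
The plan is to track the cell averages $u_j^{(0),n}$ (the $l=0$ degrees of freedom), since $\int_{\mR} u_h^n = \sum_j \Delta_j u_j^{(0),n}$, and show that the total sum over all cells is unchanged from $t^n$ to $t^{n+1}$. The key structural observation is that in all of the update formulas \eqref{eq:coarse}, the fine-region update, and \eqref{eq:corrIF}, the $l=0$ component of $L_{h,j}^{(0)}$ reduces to a pure flux difference: from \eqref{eq:Lhj} with $l=0$ the volume integral $\int_{I_j} f\,\partial_x\varphi_j^{(0)}\,dx$ vanishes (since $\varphi_j^{(0)}\equiv 1$), leaving
\begin{equation*}
L_{h,j}^{(0)} = -\frac{1}{\Delta_j}\left[ h(u_{j+\ahalf}^-,u_{j+\ahalf}^+) - h(u_{j-\ahalf}^-,u_{j-\ahalf}^+)\right].
\end{equation*}
So $\Delta_j L_{h,j}^{(0)} = -(H_{j+\ahalf} - H_{j-\ahalf})$ with $H_{j+\ahalf}$ the numerical flux at $x_{j+\ahalf}$, and summing $\Delta_j L_{h,j}^{(0)}$ over any contiguous block of cells telescopes to (left boundary flux) $-$ (right boundary flux).

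The main steps: First I would establish the telescoping identity for each SSP-RK stage separately. For a single forward-Euler-type increment $\sum_\nu \beta_{i\nu}\Delta t\, L_{h,j}^{(0)}$, summing over $j$ in the coarse block $j<j_0^n$ gives a contribution depending only on the flux at the far-left boundary (which vanishes at infinity for compactly perturbed data, or cancels in periodic settings) and the flux at $x_{j_0^n-\ahalf}$ — the latter being exactly the flux that is ``frozen over $[t^n,t^{n+1})$'' as stated in Step 3. Similarly, summing over the fine block $j>j_0^n$ at fine substep $p$ and stage $i$ telescopes to the flux at $x_{j_0^n+\ahalf}$ evaluated with the predicted interface value $u_{j_0^n+\ahalf}^{n,p,(\nu),-\text{(mod)}}$ (far-right boundary flux vanishing). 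Second, I would use the fact that the convex-combination coefficients of each SSP-RK stage satisfy $\sum_{\nu=0}^{i-1}\alpha_{i\nu}=1$ (consistency of the method), so that when one sums a whole block's cell averages through all $s$ stages, the $\alpha$-terms just carry the mass forward and only the accumulated $\beta$-flux-increments contribute a net change. Carrying this through, the net change in total mass over the coarse block from $t^n$ to $t^{n+1}$ equals $\Delta t$ times a weighted sum of the fluxes at $x_{j_0^n-\ahalf}$ across stages; the net change over the fine block from $t^n$ to $t^{n+1}$ (summing the $M$ substeps) equals $\frac{\Delta t}{M}\sum_{p=0}^{M-1}$ times the analogous weighted sum of fluxes at $x_{j_0^n+\ahalf}$ with the per-substep predicted values.

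Third, and this is where the corrector does its job, I would look at the single interface cell $I_{j_0^n}$, whose update is \eqref{eq:corrIF}. Its $l=0$ component involves $\Delta_{j_0^n} L_{h,j_0^n}^{(0)} = -\big(h \text{ at } x_{j_0^n+\ahalf}\big) + \big(h \text{ at } x_{j_0^n-\ahalf}\big)$, where the left flux uses the frozen values $u_{j_0^n-\ahalf}^{n,(\nu),\pm\text{(mod)}}$ — the same values the coarse block saw — and the right flux, crucially, is averaged as $\frac{1}{M}\sum_{p=0}^{M-1}$ over exactly the same predicted interface values $u_{j_0^n+\ahalf}^{n,p,(\nu),\pm\text{(mod)}}$ that the fine block used. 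Hence the mass change in $I_{j_0^n}$ contributes $+\Delta t\times(\text{weighted stage sum of flux at }x_{j_0^n+\ahalf}, \text{averaged over }p) - \Delta t\times(\text{weighted stage sum of flux at }x_{j_0^n-\ahalf})$. Adding the three blocks — coarse, interface cell, fine — the flux at $x_{j_0^n-\ahalf}$ cancels between coarse block and interface cell, and the flux at $x_{j_0^n+\ahalf}$ (in its $p$-averaged form, since $\frac{\Delta t}{M}\sum_p = \Delta t \cdot \frac{1}{M}\sum_p$) cancels between the fine block and the interface cell. What remains is only the far-field boundary fluxes, which vanish, giving $\sum_j \Delta_j u_j^{(0),n+1} = \sum_j \Delta_j u_j^{(0),n}$. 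Finally I would note the limiter $\Lambda\Pi_h^k$ does not alter cell averages (it only modifies the higher moments $u_j^{(l)}$, $l\ge 1$, as is clear from its definition in Subsection~\ref{subsec:limiter}), so writing $u_j^{(l),n}$ for the limited quantities is harmless for this argument.

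The main obstacle I anticipate is purely bookkeeping: verifying that the \emph{same} predicted and frozen interface quantities appear on both sides of each cancellation — i.e., that the corrector in \eqref{eq:corrIF} is built from precisely the fluxes the fine and coarse regions actually used, stage by stage and substep by substep. This requires carefully matching indices $(\nu)$, $p$, and the $\beta_{i\nu}$ coefficients across the three update blocks, and checking that the factor $\frac{\Delta t}{M}\sum_{p=0}^{M-1}$ in the corrector exactly reproduces the accumulated fine-region flux contribution $\sum_{p=0}^{M-1}\frac{\Delta t}{M}(\cdots)$. There is no analytic difficulty — the telescoping and the $\sum_\nu\alpha_{i\nu}=1$ identity do all the work — but the notational care needed to make the cancellation transparent is the real content of the proof.
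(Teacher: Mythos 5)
Your proposal is correct and follows essentially the same route as the paper: both arguments rest on the observation that $L_{h,j}^{(0)}$ is a pure flux difference and that the corrector \eqref{eq:corrIF} accumulates, with weight $\frac{\Delta t}{M}\sum_{p=0}^{M-1}$, exactly the interface fluxes at $x_{j_{0}^{n}+\ahalf}$ used by the fine substeps, while reusing the frozen flux at $x_{j_{0}^{n}-\ahalf}$ seen by the coarse block, so that everything telescopes. The paper merely localizes the cancellation to the two cells $I_{j_{0}^{n}} \cup I_{j_{0}^{n}+1}$ (assuming no flux on their outer edges) and writes the recursion out explicitly for SSP-RK$(2,2)$, whereas you phrase the same bookkeeping globally via $\sum_{\nu}\alpha_{i\nu}=1$; the mechanism is identical.
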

\begin{proof}
We only need to show that mass is conserved in the region of the LTS interface $x=x_{j_{0}^{n}+\ahalf}$, $I_{j_{0}^{n}} \cup I_{j_{0}^{n}+1}$, under the assumption that no flux is imposed at $x_{j_{0}^{n}-\ahalf}$ and $x_{j_{0}^{n}+\sfrac{3}{2}}$:
\begin{equation} \label{eq:conv}
\int_{I_{j_{0}^{n}} \cup I_{j_{0}^{n}+1}} u_{h}^{n+1} = \int_{I_{j_{0}^{n}} \cup I_{j_{0}^{n}+1}} u_{h}^{n}.
\end{equation}
Next, we prove \eqref{eq:conv} for the second order LTS scheme based on SSP-RK$(2,2)$ (cf. Equations~\eqref{eq:SSPRK2}). The proof for the third and fourth order LTS schemes can be done in a similar manner; in fact, the result holds for any high order LTS schemes with the corrector defined by~\eqref{eq:corrIF}. \vspace{0.2cm}\\
For the fine cell $I_{j_{0}^{n}+1}$, the second order LTS algorithm reads:
\begin{equation*}
\begin{array}{ll}
u_{j_{0}^{n}+1}^{(l), n+1} &\hspace{-0.2cm}= \frac{1}{2} u_{j_{0}^{n}+1}^{(l),n,M-1} + \frac{1}{2} \bigg [u_{j_{0}^{n}+1}^{(l),n,M-1,(1)} \vspace{3pt}\\
& \hspace{1cm} +  \left (\frac{\Delta t}{M}\right ) L_{h,j_{0}^{n}+1}^{(l)}\left (u_{j_{0}^{n}+\ahalf}^{n,M-1,(1),\pm},\bu_{j_{0}^{n}+1}^{n,M-1, (1)}, u_{j_{0}^{n}+\sfrac{3}{2}}^{n,M-1,(1),\pm}\right )\bigg ] \vspace{3pt}\\
&\hspace{-0.2cm}=u_{j_{0}^{n}+1}^{(l),n,M-1} + \frac{1}{2} \frac{\Delta t}{M} \sum_{\nu=0}^{1} L_{h,j_{0}^{n}+1}^{(l)}\left (u_{j_{0}^{n}+\ahalf}^{n,M-1,(\nu),\pm},\bu_{j_{0}^{n}+1}^{n,M-1, (\nu)}, u_{j_{0}^{n}+\sfrac{3}{2}}^{n,M-1,(\nu),\pm}\right ).
\end{array}
\end{equation*}
Thus, by recursion, we obtain:
\begin{equation} \label{eq:conv1}
u_{j_{0}^{n}+1}^{(l), n+1}=u_{j_{0}^{n}+1}^{(l),n} + \frac{1}{2} \frac{\Delta t}{M}\sum_{p=0}^{M-1} \sum_{\nu=0}^{1} L_{h,j_{0}^{n}+1}^{(l)}\left (u_{j_{0}^{n}+\ahalf}^{n,p,(\nu),\pm},\bu_{j_{0}^{n}+1}^{n,p, (\nu)}, u_{j_{0}^{n}+\sfrac{3}{2}}^{n,p,(\nu),\pm}\right ).
\end{equation}
Taking $v_{h}=1$ in \eqref{eq:variational}, using \eqref{eq:conv1} and the definition of $L_{h, j}^{(l)}$ in \eqref{eq:Lhj}, we have
\begin{equation} \label{eq:conv_fineb}
\int_{I_{j_{0}^{n}+1}} u_{h}^{n+1} = \int_{I_{j_{0}^{n}+1}} u_{h}^{n} + \frac{1}{2} \frac{\Delta t}{M}\sum_{p=0}^{M-1} \left ( - h(u_{j_{0}^{n}+\ahalf}^{n,p,-}, u_{j_{0}^{n}+\ahalf}^{n,p,+})-  h(u_{j_{0}^{n}+\ahalf}^{n,p,(1),-}, u_{j_{0}^{n}+\ahalf}^{n,p,(1),+})\right ).
\end{equation}
as no flux is imposed at $x_{j_{0}^{n}+\sfrac{3}{2}}$. \vspace{2pt}

For the interface cell $I_{j_{0}^{n}}$, the corrector \eqref{eq:corrIF} associated with SSP-RK$(2,2)$ is given by
\begin{equation*}
\begin{array}{ll}
\widehat{u}_{j_{0}^{n}}^{(l), n,(1)}=u_{j_{0}^{n}}^{(l),n} +  \frac{\Delta t}{M}\sum_{p=0}^{M-1}L_{h,j_{0}^{n}}^{(l)}\left (u_{j_{0}^{n}-\ahalf}^{n,\pm },\bu_{j_{0}^{n}}^{n}, u_{j_{0}^{n}+\ahalf}^{n,p,\pm}\right ), \vspace{3pt}\\
u_{j_{0}^{n}}^{(l), n+1}=\frac{1}{2}u_{j_{0}^{n}}^{(l),n}  + \frac{1}{2} \left [ \widehat{u}_{j_{0}^{n}}^{(l), n,(1)} + \frac{\Delta t}{M}\sum_{p=0}^{M-1} L_{h,j_{0}^{n}}^{(l)}\left (u_{j_{0}^{n}-\ahalf}^{n,(1),\pm},\bu_{j_{0}^{n}}^{n,(1)}, u_{j_{0}^{n}+\ahalf}^{n,p,(1),\pm}\right ) \right ],
\end{array}
\end{equation*}
from which we deduce that
\begin{equation} \label{eq:conv2}
u_{j_{0}^{n}}^{(l), n+1}=u_{j_{0}^{n}}^{(l),n} + \frac{1}{2}  \frac{\Delta t}{M}\sum_{p=0}^{M-1} \sum_{\nu=0}^{1}L_{h,j_{0}^{n}}^{(l)}\left (u_{j_{0}^{n}-\ahalf}^{n,(\nu),\pm},\bu_{j_{0}^{n}}^{n,(\nu)}, u_{j_{0}^{n}+\ahalf}^{n,p,(\nu),\pm}\right ).
\end{equation}
As for the fine cell $j=j_{0}^{n}+1$, we choose $v_{h}=1$ in \eqref{eq:variational} and use \eqref{eq:conv2} to obtain
\begin{equation} \label{eq:conv_IF}
\int_{I_{j_{0}^{n}}} u_{h}^{n+1} = \int_{I_{j_{0}^{n}}} u_{h}^{n} + \frac{1}{2} \frac{\Delta t}{M}\sum_{p=0}^{M-1} \left (  h(u_{j_{0}^{n}+\ahalf}^{n,p,-}, u_{j_{0}^{n}+\ahalf}^{n,p,+}) + h(u_{j_{0}^{n}+\ahalf}^{n,p,(1),-}, u_{j_{0}^{n}+\ahalf}^{n,p,(1),+})\right ),
\end{equation}
noting that no flux at $x_{j_{0}^{n}-\ahalf}$ is assumed. 
Thus, the proof is completed by adding \eqref{eq:conv_fineb} and \eqref{eq:conv_IF} together. 
\end{proof}
%
%
\subsection{Stability} Numerical methods for conservation laws need to satisfy certain nonlinear stability requirements in order to prevent spurious oscillations when the solution is discontinuous. In \cite{Osher83}, the first order LTS scheme  based on forward Euler is proved to be TVD with the predictor obtained by freezing the value at $t^{n}$: $$u_{j_{0}^{n}+\ahalf}^{n,p} = u_{j_{0}^{n}+\ahalf}^{n}, \quad \forall p=0, \hdots, M-1.$$ 
For higher order LTS schemes as proposed in Section~\ref{sec:LTS}, multiple stage time-stepping algorithms are employed and the predictors are obtained by taking linear combinations of the interface solution at different stages with a coarse time step size. Therefore, the proof of nonlinear stability for high order LTS schemes is not an obvious generalization from the first order one. Additionally, the corrector designed to conserve mass is not a convex combination of forward Euler steps as in the case of the global SSP-RK. As a consequence, the high order LTS schemes are not TVD anymore, instead they are TVB. 

We next prove the stability of the second order LTS scheme by first showing that it is TVBM (total variation bounded in the means). The generalization to higher order LTS schemes can be done in a similar manner. We introduce some notation to be used in the proof. Denoted by $\Delta_{+}$ and $\Delta_{-}$ the forward and backward finite difference operators, respectively: \vspace{-0.1cm}
\begin{equation*}
\Delta_{+} u_{j}  = u_{j+1} - u_{j}, \; \text{and} \quad \Delta_{-} u_{j}=u_{j}-u_{j-1}. \vspace{-0.1cm}
\end{equation*}
Following \cite{CSII}, we decompose the interface values $u_{j+ \ahalf}^{\pm}$ as  \vspace{-0.1cm}
\begin{equation*}
u_{j+\ahalf}^{-} = \baru_{j} + \widetilde{u}_{j}, \quad u_{j-\ahalf}^{+}=\baru_{j} -\dtildeu_{j}, \vspace{-0.1cm}
\end{equation*}
where $\baru_{j} := u_{j}^{(0)}$ is the mean value of $u$ on the cell $I_{j}$. As in \cite{CSI}, we denote: \vspace{-0.1cm}
\begin{equation} \label{eq:cd}
C_{j+\ahalf}=-h_{2} \cdot \left (1-\frac{\Delta_{+} \dtildeu_{j}}{\Delta_{+} \baru_{j}}\right ), \; \text{and} \quad  D_{j-\ahalf} =h_{1} \cdot \left (1+ \frac{\Delta_{-} \widetilde{u}_{j}}{\Delta_{-} \baru_{j}} \right ),  \vspace{-0.2cm}
\end{equation}
where  \vspace{-0.1cm}
\begin{align*}
h_{1}=\frac{h(u_{j+\ahalf}^{-}, u_{j-\ahalf}^{+}) - h(u_{j-\ahalf}^{-}, u_{j-\ahalf}^{+})}{u_{j+\ahalf}^{-}-u_{j-\ahalf}^{-}}, \quad
h_{2}=\frac{h(u_{j+\ahalf}^{-}, u_{j+\ahalf}^{+})-h(u_{j+\ahalf}^{-}, u_{j-\ahalf}^{+})}{u_{j+\ahalf}^{+}-u_{j-\ahalf}^{+}}. 
\end{align*}
Note that $h_{1}$ and $-h_{2}$ are nonnegative due to the monotonicity of $h(\cdot, \cdot)$. Then the flux associated with the mean value $\baru_{j}$ (cf. Equation~\eqref{eq:Lhj} with $l=0$) can be rewritten equivalently as \vspace{-0.1cm}
\begin{equation*} \label{eq:fluxCD}
-\left (h(u_{j+\ahalf}^{-}, u_{j+\ahalf}^{+})-h(u_{j-\ahalf}^{-}, u_{j-\ahalf}^{+})\right )= C_{j+\ahalf} \, \Delta_{+} \baru_{j} - D_{j-\ahalf} \, \Delta_{-} \baru_{j}. \vspace{-0.1cm}
\end{equation*}
Using the above notation, the second order LTS scheme as presented in Section~\ref{sec:LTS} for the mean value $\baru_{j}$ reads as follows: for $n=0, \ldots, N-1$, 
\begin{enumerate}[leftmargin=*]
\item Compute the predicted mean on the interface cell at the intermediate time levels from the solutions with a coarse time step: for $p=0, \ldots, M-1$,
\begin{equation} \label{eq:meanPred}
\begin{array}{ll}
\baru_{j_{0}^{n}}^{n,p}&\hspace{-0.3cm}=\left (1-\frac{p}{M}\right ) \baru_{j_{0}^{n}}^{n} + \frac{p}{M} \baru_{j_{0}^{n}+\ahalf}^{n,(1)}, \vspace{3pt}\\
\baru_{j_{0}^{n}}^{n,p,(1)}&\hspace{-0.3cm}=\left (1-\frac{p+1}{M}\right ) \baru_{j_{0}^{n}}^{n} + \frac{p+1}{M} \baru_{j_{0}^{n}+\ahalf}^{n,(1)}.
\end{array}
\end{equation}
\item Advance in the coarse region, for all $j<j_{0}^{n}$:
\begin{equation} \label{eq:coarseLTS2}
\begin{array}{ll}
\baru_{j}^{n,(1)}&\hspace{-0.3cm}=\baru_{j}^{n} +\frac{\Delta t}{\Delta x_{j}} \left (C_{j+\ahalf}^{n} \Delta_{+} \baru_{j}^{n} -D_{j-\ahalf}^{n} \Delta_{-} \baru_{j}^{n}\right ),  \vspace{3pt}\\
\baru_{j}^{n+1}&\hspace{-0.3cm}=\frac{1}{2} \baru_{j}^{n} + \frac{1}{2} \left [ \baru_{j}^{n,(1)}+\frac{\Delta t}{\Delta x_{j}} \left (C_{j+\ahalf}^{n,(1)} \Delta_{+} \baru_{j}^{n,(1)} -D_{j-\ahalf}^{n,(1)} \Delta_{-} \baru_{j}^{n,(1)}\right )\right ],
\end{array}
\end{equation}
and in the fine region, for all $j>j_{0}^{n}$: for $p=0, \ldots, M-1$,
\begin{equation} \label{eq:fineLTS2}
\begin{array}{ll}
\baru_{j}^{n,p,(1)}&\hspace{-0.3cm}=\baru_{j}^{n,p} +\frac{\Delta t}{M\Delta x_{j}} \left (C_{j+\ahalf}^{n,p} \Delta_{+} \baru_{j}^{n, p} -D_{j-\ahalf}^{n,p} \Delta_{-} \baru_{j}^{n,p}\right ), \vspace{3pt}\\
\baru_{j}^{n,p+1}&\hspace{-0.3cm}=\frac{1}{2} \baru_{j}^{n,p} + \frac{1}{2} \bigg [\baru_{j}^{n,p,(1)}+\frac{\Delta t}{M\Delta x_{j}} \big  (C_{j+\ahalf}^{n,p,(1)} \Delta_{+} \baru_{j}^{n, p,(1)}  - D_{j-\ahalf}^{n,p,(1)} \Delta_{-} \baru_{j}^{n,p,(1)}\big )\bigg ].
\end{array}
\end{equation}
Note that the interface $u_{j_{0}^{n}+\ahalf}^{n,p,(i),-}$, for $i=1,2,$ and $p=0, \ldots, M-1$ are computed by using the second order predictor~\eqref{eq:meanPred} and \eqref{eq:predUedge}. \vspace{0.2cm}\\
\item Correcting the interface values for which the flux at $x=x_{j_{0}^{n}-\ahalf}$ is frozen over $[t^{n}, t^{n+1})$:
\begin{equation} \label{eq:interfaceLTS2}\small
\begin{array}{ll}
\widehat{\baru}_{j_{0}^{n}}^{n,(1)}&\hspace{-0.4cm}=\baru_{j_{0}^{n}}^{n} -\frac{\Delta t}{M\Delta x_{j_{0}^{n}}} \sum_{p=0}^{M-1} \left (h(u_{j_{0}^{n}+\ahalf}^{n,p,-},u_{j_{0}^{n}+\ahalf}^{n,p,+})-h(u_{j_{0}^{n}-\ahalf}^{n,-},u_{j_{0}^{n}-\ahalf}^{n,+})\right ),  \vspace{3pt}\\
\baru_{j_{0}^{n}}^{n+1}&\hspace{-0.4cm}=\frac{1}{2} \baru_{j_{0}^{n}}^{n} + \frac{1}{2} \bigg [\widehat{\baru}_{j_{0}^{n}}^{n,(1)}-\frac{\Delta t}{M\Delta x_{j_{0}^{n}}} \sum_{p=0}^{M-1} \bigg (h(u_{j_{0}^{n}+\ahalf}^{n,p,(1),-},u_{j_{0}^{n}+\ahalf}^{n,p,(1),+})  -h(u_{j_{0}^{n}-\ahalf}^{n,(1),-},u_{j_{0}^{n}-\ahalf}^{n,(1),+}) \bigg )\bigg ].
\end{array}
\end{equation}
\end{enumerate}

The flux term in the right-hand side of \eqref{eq:interfaceLTS2} can be rewritten as
\begin{equation*} \label{eq:fluxIFh}
\begin{array}{ll}
&-\left (h(u_{j_{0}^{n}+\ahalf}^{n,p,-},u_{j_{0}^{n}+\ahalf}^{n,p,+})-h(u_{j_{0}^{n}-\ahalf}^{n,-},u_{j_{0}^{n}-\ahalf}^{n,+})\right ) = C_{j_{0}^{n}+\ahalf}^{n,p} \Delta_{+} \baru_{j_{0}^{n}}^{n, p} \vspace{3pt}\\
& \hspace{1cm} -D_{j_{0}^{n}-\ahalf}^{n} \Delta_{-} \baru_{j_{0}^{n}}^{n} - \left (h(u_{j_{0}^{n}+\ahalf}^{n,p,-},u_{j_{0}^{n}-\ahalf}^{n,p,+})-h(u_{j_{0}^{n}+\ahalf}^{n,-},u_{j_{0}^{n}-\ahalf}^{n,+})\right ),
\end{array}
 \end{equation*} 
where $u_{j_{0}^{n}-\ahalf}^{n,p,+}$ is computed by the same predictor as $u_{j_{0}^{n}+\ahalf}^{n,p,-}$. In addition, we write
 \begin{equation*}
 \begin{array}{ll}
  &- \left (h(u_{j_{0}^{n}+\ahalf}^{n,p,-},u_{j_{0}^{n}-\ahalf}^{n,p,+})-h(u_{j_{0}^{n}+\ahalf}^{n,-},u_{j_{0}^{n}-\ahalf}^{n,+})\right ) \\ 
  & \qquad= -\ih_{2, j_{0}^{n}-\ahalf}^{n,p} \cdot \left (u_{j_{0}^{n}-\ahalf}^{n,p,+}-u_{j_{0}^{n}-\ahalf}^{n,+}\right ) 
       -\ih_{1, j_{0}^{n}+\ahalf}^{n,p} \cdot \left (u_{j_{0}^{n}+\ahalf}^{n,p,-}-u_{j_{0}^{n}+\ahalf}^{n,-}\right ) 
 \end{array}
 \end{equation*}
 where 
\begin{equation} \label{eq:ih}
\begin{array}{ll}
\ih_{1, j_{0}^{n}+\ahalf}^{n,p}&:= \frac{h(u_{j_{0}^{n}+\ahalf}^{n,p,-},u_{j_{0}^{n}-\ahalf}^{n,+})-h(u_{j_{0}^{n}+\ahalf}^{n,-},u_{j_{0}^{n}-\ahalf}^{n,+})}{u_{j_{0}^{n}+\ahalf}^{n,p,-}-u_{j_{0}^{n}+\ahalf}^{n,-}} \geq 0, \vspace{3pt}\\ 
-\ih_{2, j_{0}^{n}-\ahalf}^{n,p}&:= -\frac{h(u_{j_{0}^{n}+\ahalf}^{n,p,-},u_{j_{0}^{n}-\ahalf}^{n,p,+})-h(u_{j_{0}^{n}+\ahalf}^{n,p,-},u_{j_{0}^{n}-\ahalf}^{n,+})}{u_{j_{0}^{n}-\ahalf}^{n,p,+}-u_{j_{0}^{n}-\ahalf}^{n,+}} \geq 0.
\end{array}
\end{equation}
Moreover, using the second order predictor \eqref{eq:2ndpredictor}, we deduce that
\begin{equation*}\small
u_{j_{0}^{n}-\ahalf}^{n,p,+}-u_{j_{0}^{n}-\ahalf}^{n,+}=\frac{p}{M}\left (u_{j_{0}^{n}-\ahalf}^{n,(1),+}-u_{j_{0}^{n}-\ahalf}^{n,+}\right ),
 \; u_{j_{0}^{n}+\ahalf}^{n,p,-}-u_{j_{0}^{n}+\ahalf}^{n,-}=\frac{p}{M}\left (u_{j_{0}^{n}+\ahalf}^{n,(1),-}-u_{j_{0}^{n}+\ahalf}^{n,-}\right ).
\end{equation*}
Therefore, we can rewrite the correction \eqref{eq:interfaceLTS2} as follows:
\begin{equation} \label{eq:IFcorr1}
\begin{array}{ll}
\widehat{\baru}_{j_{0}^{n}}^{n,(1)}&\hspace{-0.2cm}=\baru_{j_{0}^{n}}^{n} +\frac{\Delta t}{M\Delta x_{j_{0}^{n}}} \sum_{p=0}^{M-1} \bigg (C_{j_{0}^{n}+\ahalf}^{n,p}\Delta_{+} \baru_{j_{0}^{n}}^{n,p}-D_{j_{0}^{n}-\ahalf}^{n} \Delta_{-} \baru_{j_{0}^{n}}^{n}\vspace{3pt}\\ 
&+\frac{p}{M}( -\ih_{2, j_{0}^{n}-\ahalf}^{n,p})\left (u_{j_{0}^{n}-\ahalf}^{n,(1),+}-u_{j_{0}^{n}-\ahalf}^{n,+}\right )-\frac{p}{M} \ih_{1, j_{0}^{n}+\ahalf}^{n,p}\left (u_{j_{0}^{n}+\ahalf}^{n,(1),+}-u_{j_{0}^{n}+\ahalf}^{n,+}\right )\bigg ).
\end{array}
\end{equation}
Similarly, 
\begin{equation} \label{eq:IFcorr2}
\begin{array}{ll}
\baru_{j_{0}^{n}}^{n+1}&\hspace{-0.2cm}=\frac{1}{2} \baru_{j_{0}^{n}}^{n} + \frac{1}{2} \bigg [\widehat{\baru}_{j_{0}^{n}}^{n,(1)}+\frac{\Delta t}{M\Delta x_{j_{0}^{n}}} \sum_{p=0}^{M-1} \bigg (C_{j_{0}^{n}+\ahalf}^{n,p,(1)}\Delta_{+} \baru_{j_{0}^{n}}^{n,p,(1)}-D_{j_{0}^{n}-\ahalf}^{n,(1)} \Delta_{-} \baru_{j_{0}^{n}}^{n,(1)} \vspace{3pt}\\
&\hspace{-0.6cm}+\frac{p+1}{M}( -\ih_{2, j_{0}^{n}-\ahalf}^{n,p,(1)})\left (u_{j_{0}^{n}-\ahalf}^{n,(1),+}-u_{j_{0}^{n}-\ahalf}^{n,+}\right )-\frac{p+1}{M}\ih_{1, j_{0}^{n}+\ahalf}^{n,p,(1)}\left (u_{j_{0}^{n}+\ahalf}^{n,(1),+}-u_{j_{0}^{n}+\ahalf}^{n,+}\right )\bigg )\bigg ],
\end{array} \vspace{0.1cm}
\end{equation}
in which $\ih_{1, j_{0}^{n}+\ahalf}^{n,p,(1)}$ and $\ih_{2, j_{0}^{n}-\ahalf}^{n,p,(1)}$ are defined in a similar way as in \eqref{eq:ih} but with the solutions of the first stage $u_{j_{0}^{n}\pm \ahalf}^{n,p,(1), \pm}$ and $u_{j_{0}^{n}\pm \ahalf}^{n,(1), \pm}$.
The TVBM property of the second order LTS scheme is guaranteed by the following theorem.  \vspace{0.1cm}

\begin{theorem}[TVBM]
Assume that there exists some $\theta >0$ such that 
\begin{equation} \label{eq:TVDlimiter}
\begin{array}{llll}
-\theta \leq \frac{\Delta_{+} \dtildeu_{j}^{n,(i)}}{\Delta_{+} \baru_{j}^{n,(i)}} \leq 1, &  \forall\, j < j_{0}^{n}, \quad &
-\theta \leq \frac{\Delta_{+} \dtildeu_{j}^{n,p, (i)}}{\Delta_{+} \baru_{j}^{n,p, (i)}} \leq 1, & \forall\, j \geq j_{0}^{n}, \vspace{3pt}\\
 -\theta \leq -\frac{\Delta_{+} \widetilde{u}_{j}^{n,(i)}}{\Delta_{+} \baru_{j}^{n,(i)}} \leq 1, & \forall\, j < j_{0}^{n},\quad & -\theta \leq -\frac{\Delta_{+} \widetilde{u}_{j}^{n,p, (i)}}{\Delta_{+} \baru_{j}^{n,p, (i)}} \leq 1, & \forall\, j\geq j_{0}^{n},
\end{array}
\end{equation}
for $n=0, \ldots, N-1$, $p=0, \ldots, M-1$ and $i=0,1.$ In addition, if a local CFL condition is satisfied:
\begin{equation} \label{eq:localCFL}
\lambda_{j}^{n,p} (h_{1}-h_{2}) \leq \frac{1}{1+\theta},
\end{equation}
where $h_{1}$ and $-h_{2}$ are the Lipschitz coefficients of $h(\cdot, \cdot)$ with respect to the first and second arguments respectively, and $\lambda_{j}^{n,p}$ is defined by
\begin{equation*}
\lambda_{j}^{n,p} = \left \{ \begin{array}{ll} \frac{\Delta t}{\Delta x_{j}}, & \text{if} \; j\leq j_{0}^{n}+1, \vspace{3pt} \\
\frac{\Delta t}{M \Delta x_{j}}, & \text{if} \; j>j_{0}^{n}+1,
\end{array}\right . \qquad \begin{array}{l} \text{for $n=0, \ldots, N-1,$} \vspace{3pt}\\ \text{  {and} $p=0, \ldots, M-1$.}
\end{array}
\end{equation*}
Then the second order LTS scheme is TVBM. 
\end{theorem}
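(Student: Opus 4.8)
The plan is to mimic the classical Harten/Cockburn--Shu TVBM argument for the DG mean--value scheme, but carry the estimate through the three regions of the LTS scheme separately and then glue them. First I would establish the baseline: by the decomposition $u_{j+\ahalf}^{-}=\baru_j+\widetilde u_j$, $u_{j-\ahalf}^{+}=\baru_j-\dtildeu_j$ and the definitions of $C_{j+\ahalf},D_{j-\ahalf}$ in \eqref{eq:cd}, the hypotheses \eqref{eq:TVDlimiter} together with the monotonicity $h_1\geq 0\geq h_2$ give $0\le C_{j+\ahalf}^{n,(i)},\,D_{j-\ahalf}^{n,(i)}$ and analogously at intermediate levels, while the local CFL \eqref{eq:localCFL} yields $0\le \lambda_j^{n,p}(C^{\cdot}_{j+\ahalf}-D^{\cdot}_{j-\ahalf})+\text{(positive terms)}\le 1$, i.e.\ every single forward--Euler substep appearing in \eqref{eq:coarseLTS2}, \eqref{eq:fineLTS2} is a convex combination of $\baru_{j-1},\baru_j,\baru_{j+1}$ with nonnegative coefficients summing to one. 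Hence, by Harten's lemma, each such substep is TV--diminishing \emph{in the means} on its own region, and since SSP-RK$(2,2)$ is a convex combination of these substeps, the coarse update and each of the $M$ fine substeps are individually TVBM on the interior of their regions.

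The heart of the matter is the interface cell $I_{j_0^n}$ and the two cells flanking it, where the corrector \eqref{eq:interfaceLTS2} — rewritten as \eqref{eq:IFcorr1}--\eqref{eq:IFcorr2} — is \emph{not} a convex combination: besides the genuine $C\,\Delta_+\baru-D\,\Delta_-\baru$ terms one has the extra ``predictor mismatch'' terms carrying the factors $\frac{p}{M}(-\ih_{2})\bigl(u_{j_0^n-\ahalf}^{n,(1),+}-u_{j_0^n-\ahalf}^{n,+}\bigr)$ and $-\frac{p}{M}\ih_1\bigl(u_{j_0^n+\ahalf}^{n,(1),-}-u_{j_0^n+\ahalf}^{n,-}\bigr)$, and similarly at the $(1)$--stage. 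The plan here is: (i) for the ``good'' part, absorb $C_{j_0^n+\ahalf}^{n,p}\Delta_+\baru_{j_0^n}^{n,p}-D_{j_0^n-\ahalf}^{n}\Delta_-\baru_{j_0^n}^{n}$ averaged over $p$ into a Harten--type convex-combination bound using that $\baru_{j_0^n}^{n,p}$ is itself (by \eqref{eq:meanPred}) a convex combination of $\baru_{j_0^n}^{n}$ and $\baru_{j_0^n+\ahalf}^{n,(1)}$, so the $p$--average is still a convex combination of the relevant means; (ii) for the mismatch terms, use that the stage increments $u^{n,(1),\pm}-u^{n,\pm}$ are $O(\Delta t)$ times bounded flux evaluations (a consequence of one SSP-RK step plus the Lipschitz/smoothness assumption near the interface), and that $\ih_1,-\ih_2$ are bounded by the Lipschitz constants $h_1,-h_2$; summing $\frac{1}{M}\sum_{p=0}^{M-1}\frac{p}{M}\le\frac12$ gives a bound of the form $\text{const}\cdot\Delta t$ per time step, independent of $M$ and $\Delta t$. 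Adding this over $n=0,\dots,N-1$ produces the additive $BT$ term that distinguishes TVB from TVD.

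Concretely, the steps in order: (1) state and prove the single-substep Harten lemma for the interior coarse/fine cells using \eqref{eq:TVDlimiter}--\eqref{eq:localCFL}; (2) conclude interior TVBM (in fact TVDM) for the coarse SSP-RK step and for each fine substep, hence $TV(\baru^{n,p+1})\le TV(\baru^{n,p})$ away from the interface, up to the boundary contributions of the $O(1)$-many interface-adjacent cells; (3) handle the three interface cells $I_{j_0^n-1},I_{j_0^n},I_{j_0^n+1}$ by writing their updates as (convex combination of means) $+$ (mismatch remainder), bounding the remainder by $C\,\Delta t$ as above; (4) combine: $TV(\baru^{n+1})\le TV(\baru^{n})+C\Delta t$, and iterate to get $TV(\baru^{n})\le TV(\baru^{0})+C\,T$, which is TVBM with $B=C$. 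The main obstacle I anticipate is step (3), specifically controlling the predictor-mismatch terms uniformly in $M$: one must verify that the ``extra'' non-convex pieces telescope or cancel enough across the $p$--sum and the two RK stages that what survives is genuinely $O(\Delta t)$ rather than $O(1)$ — this is exactly where the specific structure of the second-order predictor \eqref{eq:2ndpredictor} (the linear-in-$p$ interpolation, giving the clean factor $p/M$) is essential, and where a careless estimate would only give boundedness of $TV$ at the final time rather than the sharp $+BT$ bound. A secondary subtlety is that, strictly speaking, one controls $TV$ of the means (TVBM); promoting this to TVB for the full $u_h$ via the limiter $\Lambda\Pi_h^k$ is standard (as in \cite{CSII}) and I would invoke it rather than reprove it.
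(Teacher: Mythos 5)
Your plan follows the paper's proof essentially step for step: the same Harten-type positivity of $C_{j+\ahalf}$, $D_{j-\ahalf}$ and the same convex-combination inequality under the local CFL condition dispose of the interior coarse and fine cells, and the interface corrector is split exactly as you describe into the genuine flux-difference part plus the predictor-mismatch terms carrying the factors $\frac{p}{M}\,\ih_{1}$ and $\frac{p}{M}(-\ih_{2})$, which are bounded by $O(\Delta t)$ per step and accumulate to the $+BT$ term. The obstacle you single out --- making the $p$-sum of the non-convex interface contributions telescope against the fine substeps by exploiting the linear-in-$p$ structure of the second-order predictor, and then passing from TVBM to TVB via the limiter as in Cockburn--Shu --- is precisely where the paper spends the bulk of its effort (its cases $j=j_{0}^{n}$ and $j=j_{0}^{n}-1$), so your outline is accurate in strategy even though that bookkeeping remains to be carried out.
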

\begin{proof} Following the techniques in \cite{Osher83}, we first introduce some important facts that will be used later in the proof. From the monotonicity of $h(\cdot, \cdot)$ and \eqref{eq:TVDlimiter}, we deduce that, in \eqref{eq:cd}: \vspace{-0.1cm}
\begin{equation} \label{eq:posCD}
C_{j+\ahalf}^{n,(i)}, D_{j+\ahalf}^{n,(i)} \geq 0, \quad \forall j < j_{0}, \quad \text{and} \quad C_{j+\ahalf}^{n,p,(i)},  D_{j+\ahalf}^{n,p,(i)} \geq 0, \quad \forall j \geq j_{0}.
\end{equation}
We may omit the superscripts for the ease of presentation. Given any two nonnegative numbers $\alpha, \beta$ and suppose $\lambda_{j}^{n,p}=\max(\alpha, \beta)$ that satisfies \eqref{eq:localCFL}, we have 
\begin{equation*}
\alpha C_{j+\ahalf}+ \beta D_{j+\ahalf} \leq 1,
\end{equation*}
and consequently,
\begin{align*}
&\left \vert \baru_{j+1}-\baru_{j} - \alpha D_{j+\ahalf} \Delta_{-} \baru_{j+1} -\beta C_{j+\ahalf} \Delta_{+} \baru_{j} \right \vert  = \left  \vert \Delta_{+} \baru_{j} \right \vert \, \left \vert 1- \alpha D_{j+\ahalf} - \beta C_{j+\ahalf} \right \vert   \vspace{3pt}\\
&= \vert \baru_{j+1}-\baru_{j} \vert - \alpha  D_{j+\ahalf} \, \vert \Delta_{-} \baru_{j+1}  \vert - \beta C_{j+\ahalf} \, \vert \Delta_{+} \baru_{j}  \vert , 
\end{align*}
in which the functions must be evaluated at the same time level. Then, together with \eqref{eq:posCD}, we deduce that
\begin{align} 
&\left \vert \baru_{j+1}-\baru_{j} + \alpha \left (C_{j+\sfrac{3}{2}} \Delta_{+} \baru_{j+1} - D_{j+\ahalf} \Delta_{-} \baru_{j+1} \right )  - \beta \left (C_{j+\ahalf} \Delta_{+} \baru_{j} -D_{j-\ahalf} \Delta_{-} \baru_{j}\right ) \right \vert    \notag \vspace{3pt} \\ 
&\leq  \left  \vert \baru_{j+1}-\baru_{j} - \alpha D_{j+\ahalf} \Delta_{-} \baru_{j+1} -\beta C_{j+\ahalf} \Delta_{+} \baru_{j} \right  \vert + \alpha C_{j+\sfrac{3}{2}} \, \vert \Delta_{+} \baru_{j+1} \vert + \beta D_{j-\ahalf} \, \vert \Delta_{-} \baru_{j} \vert  \notag \vspace{3pt}\\ 
&\leq \vert \baru_{j+1}-\baru_{j} \vert + \alpha \left ( C_{j+\sfrac{3}{2}} \, \vert \Delta_{+} \baru_{j+1} \vert  - D_{j+\ahalf} \, \vert \Delta_{-} \baru_{j+1} \vert \right )     \notag  \vspace{2pt}\\ 
&\hspace{2.2cm} - \beta \left (C_{j+\ahalf} \, \vert \Delta_{+} \baru_{j} \vert - D_{j-\ahalf} \, \vert \Delta_{-} \baru_{j} \vert \right ). \label{eq:keyes}
\end{align}
In the following, we compute the variation $\vert u_{j+1} - u_{j} \vert$ for all $j$. Particularly, we consider four cases: \vspace{0.2cm}\\
%
%
\noindent {\em i) If $j<(j_{0}^{n}-1)$}: from \eqref{eq:coarseLTS2}, we find that
\begin{equation}\label{eq:C2}
\begin{array}{ll}
\baru_{j+1}^{n,(1)} - \baru_{j}^{n, (1)}
= &\hspace{-0.2cm}\left (\baru_{j+1}^{n} - \baru_{j}^{n} \right ) +\frac{\Delta t}{\Delta x_{j+1}} \left (C_{j+\sfrac{3}{2}}^{n} \Delta_{+} \baru_{j+1}^{n} - D_{j+\ahalf}^{n} \Delta_{-} \baru_{j+1}^{n} \right) \\ &- \frac{\Delta t}{\Delta x_{j}} \left (C_{j+\ahalf}^{n} \Delta_{+} \baru_{j}^{n} -D_{j-\ahalf}^{n} \Delta_{-} \baru_{j}^{n} \right ) \bigg ].
\end{array} \hspace{-1cm}
\end{equation}
Applying \eqref{eq:keyes} with $\alpha=\frac{\Delta t}{\Delta x_{j+1}} $ and $\beta = \frac{\Delta t}{\Delta x_{j}}$, we deduce from \eqref{eq:C2} that \vspace{-0.1cm}
\begin{equation*} \label{eq:C3}
\begin{array}{ll}
 \left \vert \baru_{j+1}^{n,(1)} - \baru_{j}^{n, (1)}\right \vert \leq  \left \vert \baru_{j+1}^{n} - \baru_{j}^{n}\right \vert  + \Delta_{+} \left [ \frac{\Delta t}{\Delta x_{j}} \left (C_{j+\ahalf}^{n} \left \vert \Delta_{+} \baru_{j}^{n} \right \vert - D_{j-\ahalf}^{n} \left \vert \Delta_{-} \baru_{j}^{n}\right \vert \right )\right ].
\end{array}  \vspace{-0.1cm}
\end{equation*}
From this we obtain \vspace{-0.1cm}
\begin{align}  
&\left \vert \baru_{j+1}^{n+1} - \baru_{j}^{n+1}\right \vert \leq \frac{1}{2} \left \vert \baru_{j+1}^{n} - \baru_{j}^{n} \right \vert + \frac{1}{2} \left \vert  \baru_{j+1}^{n, (1)} - \baru_{j}^{n, (1)} \right \vert  \notag \vspace{3pt}\\
& \hspace{1cm}+ \frac{1}{2} \Delta_{+} \left [ \frac{\Delta t}{\Delta x_{j}} \left (C_{j+\ahalf}^{n,(1)} \left \vert \Delta_{+} \baru_{j}^{n,(1)} \right \vert - D_{j-\ahalf}^{n,(1)} \left \vert \Delta_{-} \baru_{j}^{n,(1)}\right \vert \right )\right ] \notag \vspace{3pt}\\
& \leq \left \vert \baru_{j+1}^{n} - \baru_{j}^{n} \right \vert + \frac{1}{2} \sum_{\nu=0}^{1} \Delta_{+} \left [ \frac{\Delta t}{\Delta x_{j}} \left (C_{j+\ahalf}^{n,(\nu)} \left \vert \Delta_{+} \baru_{j}^{n,(\nu)} \right \vert - D_{j-\ahalf}^{n,(\nu)} \left \vert \Delta_{-} \baru_{j}^{n,(\nu)}\right \vert \right ) \right ].  \notag 
\end{align} \vspace{-0.1cm}
or equivalently \vspace{-0.1cm}
\begin{align}  
&\left \vert \baru_{j+1}^{n+1} - \baru_{j}^{n+1}\right \vert \leq \left \vert \baru_{j+1}^{n} - \baru_{j}^{n} \right \vert + \frac{1}{2} \sum_{p=0}^{M-1}\sum_{\nu=0}^{1} \Delta_{+} \bigg [ \frac{\Delta t}{M\Delta x_{j}} \big (C_{j+\ahalf}^{n,(\nu)} \left \vert \Delta_{+} \baru_{j}^{n,(\nu)} \right \vert  \notag \vspace{3pt}\\
& \hspace{5cm} - D_{j-\ahalf}^{n,(\nu)} \left \vert \Delta_{-} \baru_{j}^{n,(\nu)}\right \vert \big ) \bigg ], \quad \forall\, j < j_{0}^{n}-1. \label{eq:varianceC} \vspace{-0.1cm}
\end{align}
%
%
\noindent {\em ii) If $j>j_{0}^{n}$}: By the same argument applied to \eqref{eq:fineLTS2} with a fine time step, we find that \vspace{-0.1cm}
\begin{equation*}
\begin{array}{ll} 
&\left \vert \baru_{j+1}^{n+1} - \baru_{j}^{n+1}\right \vert \leq \left \vert \baru_{j+1}^{n,M-1} - \baru_{j}^{n, M-1} \right \vert \vspace{3pt}\\
\hspace{0.4cm} &+ \frac{1}{2} \sum_{\nu=0}^{1} \Delta_{+} \left [ \frac{\Delta t}{M\Delta x_{j}} \left (C_{j+\ahalf}^{n,M-1, (\nu)} \left \vert \Delta_{+} \baru_{j}^{n, M-1, (\nu)} \right \vert - D_{j-\ahalf}^{n,M-1, (\nu)} \left \vert \Delta_{-} \baru_{j}^{n,M-1, (\nu)}\right \vert \right ) \right ]. 
\end{array}  \vspace{-0.1cm}
\end{equation*}
Repeating this argument inductively, we obtain a similar bound as \eqref{eq:varianceC}:
\begin{equation} \label{eq:variance}
\begin{array}{ll} 
\left \vert \baru_{j+1}^{n+1} - \baru_{j}^{n+1}\right \vert & \leq \left \vert \baru_{j+1}^{n} - \baru_{j}^{n} \right \vert + \frac{1}{2} \sum_{p=0}^{M-1}\sum_{\nu=0}^{1} \Delta_{+} \bigg [ \frac{\Delta t}{M\Delta x_{j}} \big (C_{j+\ahalf}^{n,p, (\nu)} \left \vert \Delta_{+} \baru_{j}^{n, p, (\nu)} \right \vert \vspace{3pt}\\
& \hspace{2cm} - D_{j-\ahalf}^{n,p, (\nu)} \left \vert \Delta_{-} \baru_{j}^{n,p, (\nu)}\right \vert \big ) \bigg ], \quad \forall\, j > j_{0}^{n}. 
\end{array} \vspace{-0.1cm}
\end{equation}
%
%
\noindent {\em iii)  If $j=j_{0}^{n}$ (the interface cell)}  \vspace{0.1cm}\\
We aim to show that \eqref{eq:variance} again holds for $j=j_{0}^{n}$, which is the main part of the proof. Using the formulation for the corrector \eqref{eq:IFcorr1}-\eqref{eq:IFcorr2}, as well as the time-stepping scheme in the fine region, we obtain
\begin{align}
&\hspace{-0.3cm} \baru_{j_{0}^{n}+1}^{n+1} -\baru_{j_{0}^{n}}^{n+1}  
= \baru_{j_{0}^{n}+1}^{n} -\baru_{j_{0}^{n}}^{n} \notag \vspace{3pt}\\
& + \frac{1}{2} \sum_{p=0}^{M-1} \sum_{\nu=0}^{1} \bigg [ \frac{\Delta t}{M\Delta x_{j_{0}^{n}+1}} \left (C_{j_{0}^{n}+\sfrac{3}{2}}^{n,p, (\nu)} \,  \Delta_{+} \baru_{j_{0}^{n}+1}^{n, p, (\nu)}  - D_{j_{0}^{n}+\ahalf}^{n,p, (\nu)} \,  \Delta_{-} \baru_{j_{0}^{n}+1}^{n,p, (\nu)}\right ) \notag \vspace{3pt}\\
&- \frac{\Delta t}{M\Delta x_{j_{0}^{n}}} \left ( C_{j_{0}^{n}+\ahalf}^{n,p,(\nu)} \Delta_{+} \baru_{j_{0}^{n}}^{n,p,(\nu)} - D_{j_{0}^{n}-\ahalf}^{n,(\nu)} \Delta_{-} \baru_{j_{0}^{n}}^{n,(\nu)}\right ) \bigg] \notag \vspace{3pt}\\
&-\frac{1}{2} \sum_{p=0}^{M-1}  \frac{\Delta t}{M\Delta x_{j_{0}^{n}}}  \bigg [ \left (\frac{p}{M}( -\ih_{2, j_{0}^{n}-\ahalf}^{n,p})+\frac{p+1}{M}( -\ih_{2, j_{0}^{n}-\ahalf}^{n,p,(1)}) \right ) \left (u_{j_{0}^{n}-\ahalf}^{n,(1),+}-u_{j_{0}^{n}-\ahalf}^{n,+}\right )\notag \vspace{3pt}\\
& -\left (\frac{p}{M} \ih_{1, j_{0}^{n}+\ahalf}^{n,p}+\frac{p+1}{M} \ih_{1, j_{0}^{n}+\ahalf}^{n,p,(1)}\right )\left (u_{j_{0}^{n}+\ahalf}^{n,(1),-}-u_{j_{0}^{n}+\ahalf}^{n,-}\right ) \bigg) \bigg ]. \notag
\end{align}
Since $u_{j_{0}^{n}-\ahalf}^{n,(1), \pm}-u_{j_{0}^{n}-\ahalf}^{n,\pm} = O(\Delta t)$, and by the CFL condition, we can bound
\begin{align}
&\hspace{-0.4cm} \left \vert \baru_{j_{0}^{n}+1}^{n+1} -\baru_{j_{0}^{n}}^{n+1}  \right  \vert \leq \bigg \vert  \baru_{j_{0}^{n}+1}^{n} -\baru_{j_{0}^{n}}^{n}  + \frac{1}{2} \sum_{p=0}^{M-1} \sum_{\nu=0}^{1} \bigg[ \frac{\Delta t}{M\Delta x_{j_{0}^{n}+1}} \big (C_{j_{0}^{n}+\sfrac{3}{2}}^{n,p, (\nu)} \,  \Delta_{+} \baru_{j_{0}^{n}+1}^{n, p, (\nu)} \notag \vspace{3pt}\\
& - D_{j_{0}^{n}+\ahalf}^{n,p, (\nu)} \,  \Delta_{-} \baru_{j_{0}^{n}+1}^{n,p, (\nu)}\big ) - \frac{\Delta t}{M\Delta x_{j_{0}^{n}}} \left ( C_{j_{0}^{n}+\ahalf}^{n,p,(\nu)} \Delta_{+} \baru_{j_{0}^{n}}^{n,p,(\nu)} - D_{j_{0}^{n}-\ahalf}^{n,(\nu)} \Delta_{-} \baru_{j_{0}^{n}}^{n,(\nu)}\right ) \bigg] \bigg \vert \notag \vspace{3pt}\\
&  + \frac{M}{1+\theta} O(\Delta t). \label{eq:IF1}
\end{align}
We have
\begin{align}
&\baru_{j_{0}^{n}+1}^{n} -\baru_{j_{0}^{n}}^{n} =\frac{1}{2M} \sum_{p=0}^{M-1} \bigg \{  \left [\left (\baru_{j_{0}^{n}+1}^{n} -\baru_{j_{0}^{n}}^{n}\right ) - \left (\baru_{j_{0}^{n}+1}^{n,p} -\baru_{j_{0}^{n}}^{n,p}\right ) \right ] \notag \vspace{3pt}\\
& \;+   \left [\left (\baru_{j_{0}^{n}+1}^{n} -\baru_{j_{0}^{n}}^{n}\right ) - \left (\baru_{j_{0}^{n}+1}^{n,p,(1)} -\baru_{j_{0}^{n}}^{n,p,(1)}\right )\right ]  + \left (\baru_{j_{0}^{n}+1}^{n,p} -\baru_{j_{0}^{n}}^{n,p}\right ) +\left (\baru_{j_{0}^{n}+1}^{n,p, (1)} -\baru_{j_{0}^{n}}^{n,p,(1)}\right ) \bigg \}. \label{eq:IF2}
\end{align}
Regarding the first two terms, let us write 
\begin{align}
& \frac{1}{2} \left [\left (\baru_{j_{0}^{n}+1}^{n} -\baru_{j_{0}^{n}}^{n}\right ) - \left (\baru_{j_{0}^{n}+1}^{n,p} -\baru_{j_{0}^{n}}^{n,p}\right ) \right ] +  \frac{1}{2} \left [\left (\baru_{j_{0}^{n}+1}^{n} -\baru_{j_{0}^{n}}^{n}\right ) - \left (\baru_{j_{0}^{n}+1}^{n,p,(1)} -\baru_{j_{0}^{n}}^{n,p,(1)}\right )\right ]  \notag \vspace{3pt}\\
&\hspace{-0.1cm}= \left (\baru_{j_{0}^{n}+1}^{n} - \baru_{j_{0}^{n}+1}^{n,p}\right ) + \frac{1}{2} \left ( \baru_{j_{0}^{n}+1}^{n,p}-u_{j_{0}^{n}+1}^{n,p,(1)}\right ) +\frac{1}{2} \left [\left (\baru_{j_{0}^{n}}^{n,p}  - \baru_{j_{0}^{n}}^{n}\right )+\left (\baru_{j_{0}^{n}}^{n,p,(1)}-\baru_{j_{0}^{n}}^{n}\right )\right ]. \label{eq:IF3} 
\end{align}
By definition of the second order predictor \eqref{eq:2ndpredictor}, the last term in \eqref{eq:IF3} is given by
\begin{equation} \label{eq:IF5}
\frac{1}{2} \left [\left (\baru_{j_{0}^{n}}^{n,p}  - \baru_{j_{0}^{n}}^{n}\right )+\left (\baru_{j_{0}^{n}}^{n,p,(1)}-\baru_{j_{0}^{n}}^{n}\right )\right ]=\frac{2p+1}{2M}\left (\baru_{j_{0}^{n}}^{n,(1)}-\baru_{j_{0}^{n}}^{n}\right ) = O(\Delta t).
\end{equation}
On the other hand, the first and second terms in \eqref{eq:IF3} can be computed by using the time-stepping in the fine region \eqref{eq:fineLTS2}:
\begin{equation}  \label{eq:IF4c}
\begin{array}{ll}
 &\left (\baru_{j_{0}^{n}+1}^{n}  - \baru_{j_{0}^{n}+1}^{n,p}\right )+\frac{1}{2}\left ( \baru_{j_{0}^{n}+1}^{n,p}-\baru_{j_{0}^{n}+1}^{n,p,(1)}\right ) \vspace{3pt}\\
 &\quad=-\frac{1}{2} \sum_{q=0}^{p} \frac{\Delta t}{M \Delta x_{j_{0}^{n}+1}} \left (C_{j_{0}^{n}+\sfrac{3}{2}}^{n,q} \,  \Delta_{+} \baru_{j_{0}^{n}+1}^{n, q}  - D_{j_{0}^{n}+\ahalf}^{n,q} \,  \Delta_{-} \baru_{j_{0}^{n}+1}^{n,q}\right ) \vspace{3pt}\\
 &\quad\quad-\frac{1}{2} \sum_{q=0}^{p-1}\frac{\Delta t}{M \Delta x_{j_{0}^{n}+1}} \left (C_{j_{0}^{n}+\sfrac{3}{2}}^{n,q, (1)} \,  \Delta_{+} \baru_{j_{0}^{n}+1}^{n, q, (1)}  - D_{j_{0}^{n}+\ahalf}^{n,q, (1)} \,  \Delta_{-} \baru_{j_{0}^{n}+1}^{n,q, (1)}\right ).
 \end{array} \vspace{-0.1cm}
\end{equation}
Summing \eqref{eq:IF4c} over $p=0, \ldots, M-1$ yields
\begin{align}
& \sum_{p=0}^{M-1} \left \{ \left (\baru_{j_{0}^{n}+1}^{n}  - \baru_{j_{0}^{n}+1}^{n,p}\right ) +\frac{1}{2}\left ( \baru_{j_{0}^{n}+1}^{n,p}-u_{j_{0}^{n}+1}^{n,p,(1)}\right ) \right \} \notag \vspace{2pt}\\
&\quad = -\frac{1}{2}  \sum_{p=0}^{M-1} \left (1-\frac{p}{M}\right )  \frac{\Delta t}{\Delta x_{j_{0}^{n}+1}} \left (C_{j_{0}^{n}+\sfrac{3}{2}}^{n,p} \,  \Delta_{+} \baru_{j_{0}^{n}+1}^{n, p}  - D_{j_{0}^{n}+\ahalf}^{n,p} \,  \Delta_{-} \baru_{j_{0}^{n}+1}^{n,p}\right ) \notag \vspace{2pt}\\
&\quad\quad -\frac{1}{2}  \sum_{p=0}^{M-1}\left (1-\frac{p+1}{M}\right )  \frac{\Delta t}{\Delta x_{j_{0}^{n}+1}} \left (C_{j_{0}^{n}+\sfrac{3}{2}}^{n,p, (1)} \,  \Delta_{+} \baru_{j_{0}^{n}+1}^{n, p, (1)}  - D_{j_{0}^{n}+\ahalf}^{n,p, (1)} \,  \Delta_{-} \baru_{j_{0}^{n}+1}^{n,p, (1)}\right ).  \label{eq:IF6} \vspace{-0.1cm}
\end{align}
Substituting \eqref{eq:IF5} and \eqref{eq:IF6}  into \eqref{eq:IF2} and then \eqref{eq:IF1}, and using \eqref{eq:keyes} with $\textstyle\alpha= \frac{p}{M} \left(\frac{\Delta t}{\Delta x_{j_{0}^{n}+1}}\right)$ or $\textstyle\alpha= \frac{p+1}{M}\left(\frac{\Delta t}{\Delta x_{j_{0}^{n}+1}}\right)$ and $\textstyle\beta =\frac{\Delta t}{\Delta x_{j_{0}^{n}}}$, we obtain: 
\begin{equation} \label{eq:IF7}
\begin{array}{ll}
 & \left \vert \baru_{j_{0}^{n}+1}^{n+1} -\baru_{j_{0}^{n}}^{n+1} \right \vert \leq  \frac{1}{2M} \sum_{p=0}^{M-1} \bigg \{\left  \vert \baru_{j_{0}^{n}+1}^{n,p} -\baru_{j_{0}^{n}}^{n,p}\right  \vert  +\left  \vert \baru_{j_{0}^{n}+1}^{n,p, (1)} -\baru_{j_{0}^{n}}^{n,p,(1)}\right  \vert  \vspace{3pt}\\ 
&\qquad + \frac{p}{M} \frac{\Delta t}{\Delta x_{j_{0}^{n}+1}} \left (C_{j_{0}^{n}+\sfrac{3}{2}}^{n,p} \, \left \vert \Delta_{+} \baru_{j_{0}^{n}+1}^{n, p} \right \vert - D_{j_{0}^{n}+\ahalf}^{n,p} \, \left \vert \Delta_{-} \baru_{j_{0}^{n}+1}^{n,p}  \right \vert \right )\vspace{3pt}\\
 &\qquad + \frac{p+1}{M}\frac{\Delta t}{\Delta x_{j_{0}^{n}+1}} \left (C_{j_{0}^{n}+\sfrac{3}{2}}^{n,p, (1)} \,  \left \vert \Delta_{+} \baru_{j_{0}^{n}+1}^{n, p, (1)}\right \vert  - D_{j_{0}^{n}+\ahalf}^{n,p, (1)} \,  \left \vert \Delta_{-} \baru_{j_{0}^{n}+1}^{n,p, (1)}  \right \vert \right ) \vspace{3pt}\\
 &\qquad - \frac{\Delta t}{\Delta x_{j_{0}^{n}}} \sum_{\nu=0}^{1} \left (C_{j_{0}^{n}+\ahalf}^{n,p, (\nu)} \,  \left \vert \Delta_{+} \baru_{j_{0}^{n}}^{n, p, (\nu)}  \right \vert - D_{j_{0}^{n}-\ahalf}^{n,(\nu)} \,  \left \vert \Delta_{-} \baru_{j_{0}^{n}}^{n,(\nu)}  \right \vert\right ) \bigg\}+ O(\Delta t).
\end{array} 
\end{equation}
Furthermore, by the definition of the second order predictor, we have 
$\textstyle\baru_{j_{0}^{n}}^{n,p,(1)}~=~\baru_{j_{0}^{n}}^{n,p}+\frac{1}{M}\left (\baru_{j_{0}^{n}}^{n,(1)}-\baru_{j_{0}^{n}}^{n}\right ) = \baru_{j_{0}^{n}}^{n,p}+ O(\Delta t).$
This together with using \eqref{eq:keyes} for $\textstyle\alpha~=~\frac{\Delta t}{M \Delta x_{j_{0}^{n}+1}} $  and $\beta =0$, we have that 
\begin{equation} \label{eq:IF9}
\begin{array}{ll}
&\hspace{-0.3cm} \left \vert \baru_{j_{0}^{n}+1}^{n,p,(1)} -\baru_{j_{0}^{n}}^{n,p,(1)} \right \vert = \bigg \vert \baru_{j_{0}^{n}+1}^{n, p}-\baru_{j_{0}^{n}}^{n, p} \vspace{3pt} \\
& + \frac{\Delta t}{M \Delta x_{j_{0}^{n}+1}} \left (C_{j_{0}^{n}+\sfrac{3}{2}}^{n,p} \,  \Delta_{+} \baru_{j_{0}^{n}+1}^{n, p}  - D_{j_{0}^{n}+\ahalf}^{n,p} \,  \Delta_{-} \baru_{j_{0}^{n}+1}^{n,p}\right )  \bigg \vert    + O(\Delta t)
\vspace{3pt} \\
& \leq \left \vert \baru_{j_{0}^{n}+1}^{n,p} -\baru_{j_{0}^{n}}^{n,p} \right \vert +  \frac{\Delta t}{M \Delta x_{j_{0}^{n}+1}}\left (C_{j_{0}^{n}+\sfrac{3}{2}}^{n,p} \left \vert \Delta_{+} \baru_{j_{0}^{n}+1}^{n, p} \right \vert  - D_{j_{0}^{n}+\ahalf}^{n,p} \left \vert \Delta_{-} \baru_{j_{0}^{n}+1}^{n,p} \right \vert \right )  + O(\Delta t).
\end{array}  
\end{equation}
Plugging this into \eqref{eq:IF7} yields: \vspace{-0.1cm}
\begin{equation} \label{eq:IFess_b}
\begin{array}{ll}
& \left \vert \baru_{j_{0}^{n}+1}^{n+1} -\baru_{j_{0}^{n}}^{n+1} \right \vert \leq \frac{1}{M} \sum_{p=0}^{M-1} \bigg \{ \left \vert \baru_{j_{0}^{n}+1}^{n,p} -\baru_{j_{0}^{n}}^{n,p} \right \vert  \vspace{2pt}\\
&\qquad + \frac{p+1}{2M} \frac{\Delta t}{\Delta x_{j_{0}^{n}+1}} \left (C_{j_{0}^{n}+\sfrac{3}{2}}^{n,p} \left \vert \Delta_{+} \baru_{j_{0}^{n}+1}^{n, p} \right \vert  - D_{j_{0}^{n}+\ahalf}^{n,p} \left \vert \Delta_{-} \baru_{j_{0}^{n}+1}^{n,p} \right \vert \right ) \vspace{3pt}\\
&\qquad + \frac{p+1}{2M} \frac{\Delta t}{\Delta x_{j_{0}^{n}+1}} \left (C_{j_{0}^{n}+\sfrac{3}{2}}^{n,p,(1)} \left \vert \Delta_{+} \baru_{j_{0}^{n}+1}^{n, p,(1)} \right \vert  - D_{j_{0}^{n}+\ahalf}^{n,p,(1)} \left \vert \Delta_{-} \baru_{j_{0}^{n}+1}^{n,p,(1)} \right \vert \right ) \vspace{3pt}\\
&\qquad - \frac{1}{2}\sum_{\nu=0}^{1}\frac{\Delta t}{\Delta x_{j_{0}^{n}}} \left (C_{j_{0}^{n}+\ahalf}^{n,p,(\nu)} \left \vert \Delta_{+} \baru_{j_{0}^{n}}^{n, p,(\nu)} \right \vert  - D_{j_{0}^{n}-\ahalf}^{n,(\nu)} \left \vert  \Delta_{-} \baru_{j_{0}^{n}}^{n,(\nu)} \right \vert \right ) \bigg \} + O(\Delta t).
\end{array} \hspace{-0.8cm}
\end{equation}
On the other hand, by the SSP-RK$(2,2)$ time-stepping in the fine cell $(j_{0}^{n}+1)$ and using \eqref{eq:IF9}, we deduce that \vspace{-0.1cm}
\begin{equation*} 
\begin{array}{ll}
&\hspace{-0.4cm} \left \vert \baru_{j_{0}^{n}+1}^{n,p} -\baru_{j_{0}^{n}}^{n,p} \right \vert = \bigg \vert \frac{1}{2} \left (\baru_{j_{0}^{n}+1}^{n, p-1}-\baru_{j_{0}^{n}}^{n, p-1}\right ) + \frac{1}{2} \left (\baru_{j_{0}^{n}+1}^{n, p-1,(1)}-\baru_{j_{0}^{n}}^{n, p-1,(1)}\right )  \vspace{3pt} \\
& + \frac{1}{2} \frac{\Delta t}{M \Delta x_{j_{0}^{n}+1}}\left (C_{j_{0}^{n}+\sfrac{3}{2}}^{n,p-1,(1)} \,  \Delta_{+} \baru_{j_{0}^{n}+1}^{n, p-1,(1)}  - D_{j_{0}^{n}+\ahalf}^{n,p-1,(1)} \,  \Delta_{-} \baru_{j_{0}^{n}+1}^{n,p-1,(1)}\right )   \vspace{3pt} \\
&  + \frac{1}{2}\left (\baru_{j_{0}^{n}}^{n, p-1}+\baru_{j_{0}^{n}}^{n, p-1,(1)}-2\baru_{j_{0}^{n}}^{n,p}\right ) \bigg \vert 
 \leq \left \vert \baru_{j_{0}^{n}+1}^{n,p-1} -\baru_{j_{0}^{n}}^{n,p-1} \right \vert \vspace{3pt}\\
&+\frac{1}{2} \frac{\Delta t}{M \Delta x_{j_{0}^{n}+1}} \sum_{\nu=0}^{1}\left (C_{j_{0}^{n}+\sfrac{3}{2}}^{n,p-1,(\nu)} \left \vert \Delta_{+} \baru_{j_{0}^{n}+1}^{n, p-1,(\nu)} \right \vert  - D_{j_{0}^{n}+\ahalf}^{n,p-1,(\nu)} \left \vert \Delta_{-} \baru_{j_{0}^{n}+1}^{n,p-1,(\nu)} \right \vert \right ) + O(\Delta t),
\end{array}
\end{equation*}
in which we have used definition of the second order predictor to obtain \vspace{-0.1cm}
\begin{align*}
\frac{1}{2}\left (\baru_{j_{0}^{n}}^{n, p-1}+\baru_{j_{0}^{n}}^{n, p-1,(1)}-2\baru_{j_{0}^{n}}^{n,p}\right ) &= \frac{1}{2}\left (\baru_{j_{0}^{n}}^{n, p-1}-\baru_{j_{0}^{n}}^{n,p}\right ) =\frac{1}{2M}\left (\baru_{j_{0}^{n}}^{n}-\baru_{j_{0}^{n}}^{n,(1)}\right ) = O(\Delta t). \vspace{-0.1cm}
\end{align*}
Repeat the above argument inductively, we arrive at \vspace{-0.1cm}
\begin{equation*} \label{eq:IF10}
\begin{array}{ll}
& \left \vert \baru_{j_{0}^{n}+1}^{n,p} -\baru_{j_{0}^{n}}^{n,p} \right \vert \leq \left \vert \baru_{j_{0}^{n}+1}^{n} -\baru_{j_{0}^{n}}^{n} \right \vert \vspace{3pt}\\
&+\frac{1}{2} \frac{\Delta t}{M \Delta x_{j_{0}^{n}+1}} \sum_{q=0}^{p-1}\sum_{\nu=0}^{1}\left (C_{j_{0}^{n}+\sfrac{3}{2}}^{n,q,(\nu)} \left \vert \Delta_{+} \baru_{j_{0}^{n}+1}^{n, q,(\nu)} \right \vert  - D_{j_{0}^{n}+\ahalf}^{n,q,(\nu)} \left \vert \Delta_{-} \baru_{j_{0}^{n}+1}^{n,q,(\nu)} \right \vert \right ) + O(\Delta t).
\end{array} \vspace{-0.1cm}
\end{equation*}
Consequently, \vspace{-0.1cm}
\begin{equation*} \label{eq:IF11}
\begin{array}{ll}
&\hspace{-0.4cm} \sum_{p=0}^{M-1}\left \vert \baru_{j_{0}^{n}+1}^{n,p} -\baru_{j_{0}^{n}}^{n,p} \right \vert \leq M\left \vert \baru_{j_{0}^{n}+1}^{n} -\baru_{j_{0}^{n}}^{n} \right \vert \vspace{3pt}\\
&+\frac{1}{2} \frac{\Delta t}{M \Delta x_{j_{0}^{n}+1}} \sum_{p=0}^{M-1} \sum_{q=0}^{p-1}\sum_{\nu=0}^{1}\left (C_{j_{0}^{n}+\sfrac{3}{2}}^{n,q,(\nu)} \left \vert \Delta_{+} \baru_{j_{0}^{n}+1}^{n, q,(\nu)} \right \vert  - D_{j_{0}^{n}+\ahalf}^{n,q,(\nu)} \left \vert \Delta_{-} \baru_{j_{0}^{n}+1}^{n,q,(\nu)} \right \vert \right ) + O(\Delta t)\vspace{3pt}\\
& 
\leq M\left \vert \baru_{j_{0}^{n}+1}^{n} -\baru_{j_{0}^{n}}^{n} \right \vert +\frac{1}{2} \frac{\Delta t}{M \Delta x_{j_{0}^{n}+1}} \sum_{p=0}^{M-1}\sum_{\nu=0}^{1} \left (1-\frac{p+1}{M}\right )\bigg (C_{j_{0}^{n}+\sfrac{3}{2}}^{n,p,(\nu)} \left \vert \Delta_{+} \baru_{j_{0}^{n}+1}^{n, p,(\nu)} \right \vert \vspace{3pt}\\
& \hspace{5cm} - D_{j_{0}^{n}+\ahalf}^{n,p,(\nu)} \left \vert \Delta_{-} \baru_{j_{0}^{n}+1}^{n,p,(\nu)} \right \vert \bigg ) + O(\Delta t),
\end{array} \vspace{-0.1cm}
\end{equation*}
where the last inequality is obtained by reversing the order of summation. Plug this into \eqref{eq:IFess_b}, we find that
\begin{equation} \label{eq:IFess_bb} 
\begin{array}{ll}
 & \left \vert \baru_{j_{0}^{n}+1}^{n+1} -\baru_{j_{0}^{n}}^{n+1} \right \vert \leq \left \vert \baru_{j_{0}^{n}+1}^{n} - \baru_{j_{0}^{n}}^{n} \right \vert \vspace{3pt}\\
 & + \frac{1}{2} \sum_{p=0}^{M-1} \sum_{\nu=0}^{1} \bigg [ \frac{\Delta t}{M\Delta x_{j_{0}^{n}+1}} \left (C_{j_{0}^{n}+\sfrac{3}{2}}^{n,p, (\nu)} \left \vert \Delta_{+} \baru_{j_{0}^{n}+1}^{n, p, (\nu)} \right \vert - D_{j_{0}^{n}+\ahalf}^{n,p, (\nu)} \left \vert \Delta_{-} \baru_{j_{0}^{n}+1}^{n,p, (\nu)}\right \vert \right ) \vspace{3pt}\\
 & + \frac{\Delta t}{M\Delta x_{j_{0}^{n}}} \left (C_{j_{0}^{n}+\ahalf}^{n,p, (\nu)} \left \vert \Delta_{+} \baru_{j_{0}^{n}}^{n, p, (\nu)} \right \vert - D_{j_{0}^{n}-\ahalf}^{n,(\nu)} \left \vert \Delta_{-} \baru_{j_{0}^{n}}^{n,(\nu)}\right \vert \right ) \bigg] + O(\Delta t).
\end{array} \hspace{-0.3cm} 
\end{equation}
%
%
\noindent {\em iv) If $j=j_{0}^{n}-1$}: It remains to investigate the case of the interface cell and its neighbor in $\Omega_c^{n}$. Similarly to \eqref{eq:IF1}, we have 
\begin{align*}
&\hspace{-0.4cm} \left \vert \baru_{j_{0}^{n}}^{n+1} -\baru_{j_{0}^{n}-1}^{n+1}  \right  \vert \leq \bigg \vert  \baru_{j_{0}^{n}}^{n} -\baru_{j_{0}^{n}-1}^{n}  + \frac{1}{2} \sum_{p=0}^{M-1} \sum_{\nu=0}^{1}  \frac{\Delta t}{M\Delta x_{j_{0}^{n}}} \big (C_{j_{0}^{n}+\sfrac{1}{2}}^{n,p, (\nu)} \,  \Delta_{+} \baru_{j_{0}^{n}}^{n, p, (\nu)} \notag \vspace{3pt}\\
& - D_{j_{0}^{n}-\ahalf}^{n,(\nu)} \,  \Delta_{-} \baru_{j_{0}^{n}}^{n,(\nu)}\big ) - \frac{\Delta t}{M\Delta x_{j_{0}^{n}-1}} \left ( C_{j_{0}^{n}-\ahalf}^{n,(\nu)} \Delta_{+} \baru_{j_{0}^{n}-1}^{n,(\nu)} - D_{j_{0}^{n}-\sfrac{3}{2}}^{n,(\nu)} \Delta_{-} \baru_{j_{0}^{n}-1}^{n,(\nu)}\right ) \bigg \vert \notag \vspace{3pt}\\
&  + \frac{M}{1+\theta} O(\Delta t). \label{eq:IF_c1} 
\end{align*}
By performing similar manipulations as for the case $j=j_{0}^{n}$, one arrives at
\begin{equation} \label{eq:IFess_c1}
\begin{array}{ll}
 &\hspace{-0.4cm} \left \vert \baru_{j_{0}^{n}}^{n+1} -\baru_{j_{0}^{n}-1}^{n+1} \right \vert \leq \left \vert \baru_{j_{0}^{n}}^{n} - \baru_{j_{0}^{n}-1}^{n} \right \vert \vspace{3pt}\\
 & + \frac{1}{2} \sum_{p=0}^{M-1} \sum_{\nu=0}^{1} \bigg [ \frac{\Delta t}{M\Delta x_{j_{0}^{n}}} \left (C_{j_{0}^{n}+\ahalf}^{n,p, (\nu)} \left \vert \Delta_{+} \baru_{j_{0}^{n}}^{n, p, (\nu)} \right \vert - D_{j_{0}^{n}-\ahalf}^{n, (\nu)} \left \vert \Delta_{-} \baru_{j_{0}^{n}}^{n,(\nu)}\right \vert \right ) \vspace{3pt}\\
 & + \frac{\Delta t}{M\Delta x_{j_{0}^{n}-1}} \left (C_{j_{0}^{n}-\ahalf}^{n,(\nu)} \left \vert \Delta_{+} \baru_{j_{0}^{n}-1}^{n,(\nu)} \right \vert - D_{j_{0}^{n}-\sfrac{3}{2}}^{n,(\nu)} \left \vert \Delta_{-} \baru_{j_{0}^{n}-1}^{n,(\nu)}\right \vert \right ) \bigg] + O(\Delta t).
\end{array} 
\end{equation}
Finally, we combine \eqref{eq:varianceC}, \eqref{eq:variance}, \eqref{eq:IFess_bb} and \eqref{eq:IFess_c1} and obtain: 
$$ TV(\baru^{n+1}) \leq TV(\baru^{n}) + O(\Delta t), \quad \text{or} \quad  TV(\baru^{n}) \leq TV(\baru^{0}) + CT.
$$
Hence, the second order LTS scheme is TVBM. 
\end{proof} 
The condition~\eqref{eq:TVDlimiter} is fulfilled if the solution is limited by the {\em minmod} function $m$ defined in \eqref{eq:minmod} (see \cite{CSII}). The scheme remains TVB when the modified {\em minmod} function $\widetilde{m}$ is used, which is achieved by Theorem~2.2 in \cite{Shu87a} (see also \cite[Lemma 2.3]{CSII}). Finally, the TVB property of the means $\baru_{j}$ can be passed to whole solution $u_{h}$ in the same manner as the RK-DG method \cite[Propositition 2.11]{CSII}. 
We remark that it is assumed that the solution near the time-dependent LTS interface is sufficiently smooth so that the condition~\eqref{eq:TVDlimiter} is satisfied in the region of the LTS interface without limiting. In practice, local time-stepping should be coupled with adaptive spatial meshing to achieve computational efficiency and accuracy when dealing with hyperbolic conservation laws. 

\section{Numerical experiments} \label{sec:NumRe}
We consider several standard test cases of one dimensional scalar conservation laws (Subsection~\ref{subsec:scalarNumRe}) and system of conservation laws (Subsection~\ref{subsec:sysNumRe}). We aim to verify the accuracy, mass conservation and stability of the LTS schemes as predicted theoretically and compare with those by the global time-stepping (GTS) schemes. 
As a first step towards study the behavior of proposed schemes, we use a fixed LTS interface (i.e., $j_{0}^{n} = j_{0}$ for all $n$ ) in all the tests, instead of a time-varying LTS interface as discussed in Section~\ref{sec:LTS}, and leave the investigation on parallel performance of the proposed methods with space-time adaptive multiresolution meshes in two and three dimensions to future work. Note that for the prediction step, if the solution is discontinuous at the fixed LTS interface, then it is necessary to limit the solution on the interface with a coarse time step $\bu_{j_{0}}^{n,(i) \text{(mod)}}$ for $i=1, \ldots, s-1$ before calculating the predicted interface values~\eqref{eq:predUedge} at intermediate time levels. 

\subsection{Scalar conservation laws} \label{subsec:scalarNumRe} 
We first consider two model problems that obey the scalar conservation laws: the linear advection equation and Burgers' equation.
For problems with a smooth solution, we confirm the convergence order in time of our LTS algorithms. 
The effectiveness of LTS algorithms is demonstrated by comparing with the GTS schemes in terms of accuracy and CFL conditions.
\subsubsection*{Example 1: Linear problem}
We solve the linear advection problem with a smooth initial condition \vspace{-0.1cm}
\begin{equation} \label{eq:adv}
u_{t} + u_{x} = 0, \quad u(x,0) = \sin \pi x,   \vspace{-0.1cm}
\end{equation}
in $-1 \leq x \leq 1$ with periodic boundary conditions. The exact solution is given by $u(x,t)=\sin \pi (x-t)$. The spatial domain is divided into two subdomains, $\Omega_{1}=[-1,0]$ and $\Omega_{2}=[0,1]$. 
The mesh size and time step size in $\Omega_i$ are respectively $\Delta x_i$ and $\Delta t_i$, which are fine when $i=1$ and are coarse  when $i=2$: 
$$ \Delta x_{1} = \frac{\Delta x_{\text{coarse}}}{M}, \; \Delta x_{2}=\Delta x_{\text{coarse}}, \;\; \Delta t_{1}= \frac{\iC}{2k+1} \Delta x_{1}=\frac{\Delta t_{\text{coarse}}}{M}, \; \Delta t_{2}=\Delta t_{\text{coarse}},
$$
for $M=1,2,4,8$, and for $k=1,2,3$ corresponding to second, third and fourth order LTS methods. The $L^{1}$ relative errors at $T=2$ of the three LTS algorithms are listed in Table~\ref{tab:Advsmooth}. We observe that for all schemes, the errors decrease as $M$ increases; and the LTS schemes (with $M= 2, 4, 8$) preserve the order of convergence as in the GTS case ($M=1$), regardless of how large $M$ is.  

\begin{table}[!ht]\footnotesize
\setlength{\extrarowheight}{3pt}
\centering
\begin{tabular}{|c|c|c c|c c| c c|}
\hline 
\multicolumn{8}{|c|}{Linear advection} \\ \hline
$\Delta x_{\text{coarse}}$ & $M$ & \multicolumn{2}{c|}{RK-DG2}& \multicolumn{2}{c|}{RK-DG3} & \multicolumn{2}{c|}{RK-DG4}\\  \cline{3-8}
& & Rel. $L^{1}$ error & [CR] & Rel. $L^{1}$ error & [CR] & Rel. $L^{1}$ error & [CR]   \\ \hline 
\multirow{4}{*}{$1/5$} & $1$  &  5.70e-02  & -- & 1.39e-03  &  -- & 4.43e-05  & --  \\ \cline{2-8}
& $2$   & 3.59e-02   & -- & 8.09e-04   &  -- & 2.40e-05  & --  \\ \cline{2-8}
& $4$  & 3.13e-02  & -- & 7.65e-04   &  -- & 2.32e-05 & --  \\ \cline{2-8}
& $8$  & 3.04e-02  & -- & 7.62e-04   &  -- & 2.33e-05 & --  \\ \hline
\multirow{4}{*}{$1/10$} & $1$  &  1.36e-02   & [2.07] & 1.66e-04   &  [3.07] & 2.73e-06   & [4.02] \\ \cline{2-8}
& $2$   & 8.71e-03  & [2.04]&  9.76e-05  &  [3.05] & 1.46e-06  & [4.04]  \\ \cline{2-8}
& $4$  & 7.60e-03   & [2.04] &  9.20e-05  &  [3.06] & 1.39e-06  & [4.06]  \\ \cline{2-8}
& $8$  & 3.91e-03   & [2.04] & 9.17e-05   &  [3.06] & 1.41e-06 & [4.05]  \\ \hline
\multirow{4}{*}{$1/20$} & $1$ & 3.30e-03  & [2.04] & 2.04e-05  & [3.03] & 1.71e-07  & [4.00]\\ \cline{2-8}
& $2$  & 2.14e-03  & [2.03]& 1.20e-05  & [3.02] & 9.07e-08  & [4.01]\\ \cline{2-8}
& $4$ & 1.87e-03 & [2.02] & 1.14e-05  & [3.01] &  8.66e-08 & [4.01]\\ \cline{2-8}
& $8$ & 1.81e-03  & [2.03] &  1.13e-05  & [3.02] & 8.83e-08  & [4.00]\\ \hline
\multirow{4}{*}{$1/40$} & $1$ & 8.11e-04  & [2.03] &  2.54e-06 & [3.01] &  1.07e-08 & [4.00]\\ \cline{2-8}
& $2$  &  5.31e-04 & [2.01] &  1.49e-06 & [3.01] & 5.67e-09  & [4.00]\\ \cline{2-8}
& $4$ &  4.64e-04 & [2.01] & 1.41e-06  & [3.02] & 5.42e-09  & [4.00]\\ \cline{2-8}
& $8$ & 4.50e-04  & [2.01] & 1.41e-06   & [3.00] & 5.53e-09   & [4.00]\\  \hline 
\end{tabular}
\caption{[Linear advection with a smooth initial condition] $L^{1}$ relative errors at $T=2$ for different $M$. The rates of convergence ``CR''  for fixed $M$ are shown in square brackets.} \label{tab:Advsmooth} \vspace{-0.2cm}
\end{table}
Now to show that the LTS algorithms are stable with a local CFL condition, we still consider the linear problem \eqref{eq:adv} but with a discontinuous initial condition   \vspace{-0.1cm}
$$ u(x,0) = \left \{ \begin{array}{ll} 2, & x \leq -1, \\
-1, & x>-1.
\end{array} \right .  \vspace{-0.1cm}
$$
We are interested in the behavior of the approximate solution near the discontinuity $x=-0.5$ at $T=0.5$ and $x=0$ (the LTS interface) at $T=1$.  Again, the fine region is $\Omega_{1}=[-1,0]$ and the coarse one is $\Omega_{2}=[0,1]$. We use second order RK-DG method and consider three schemes as follows:
\begin{enumerate}[leftmargin=*]
\item Coarse GTS scheme with a coarse global time step $\Delta t = \Delta t_{\text{coarse}}$. \vspace{3pt}
\item Fine GTS scheme with a fine global time step $\Delta t = \sfrac{\Delta t_{\text{coarse}}}{M}$. \vspace{3pt}
\item LTS scheme with spatial variable time step $\Delta t_{1}=\sfrac{\Delta t_{\text{coarse}}}{M}$ and $\Delta t_{2}=\Delta t_{\text{coarse}}$. \vspace{3pt}
\end{enumerate}
Note that the spatial mesh is refined in $\Omega_{1}$ by a factor of $M$. Table~\ref{tab:Advdis} shows the $L^{1}$ relative errors of the three schemes at $T=0.5$ and $T=1$ respectively. It is seen that the coarse GTS becomes unstable as the spatial mesh is refined due to the violation of the CFL condition, while the LTS scheme, with a valid local CFL condition, gives stable solution with the same accuracy as the fine GTS scheme.

\begin{table}[!http]\footnotesize
\setlength{\extrarowheight}{3pt}
\centering
\begin{tabular}{|c|c | c| c | c |  c| c | c |}
\hline 
\multicolumn{8}{|c|}{Linear advection, RK-DG2} \\ \hline
\multirow{2}{*}{$\Delta x_{\text{coarse}}$} & \multirow{2}{*}{$M$}   & \multicolumn{3}{c}{At $T=0.5$} & \multicolumn{3}{|c|}{At $T=1$} \\    \cline{3-8}
&  & coarse GTS & fine GTS & LTS & coarse GTS & fine GTS & LTS  \\ \hline
\multirow{4}{*}{1/40} & 1  & \multicolumn{3}{c|}{2.58e-02} & \multicolumn{3}{c|}{2.53e-02} \\ \cline{2-8}
& 2 & 3.46e-02 & \multicolumn{2}{c|}{1.52e-02} & 3.46e-02 & \multicolumn{2}{c|}{1.50e-02} \\  \cline{2-8}
& 4 & -- &  \multicolumn{2}{c|}{9.00e-03}  & -- &  \multicolumn{2}{c|}{8.99e-03} \\  \cline{2-8}
& 8 & --  & \multicolumn{2}{c|}{5.40e-03} & --  & \multicolumn{2}{c|}{5.46e-03} \\  \hline 
\end{tabular}
\caption{[Linear advection with a discontinuous initial condition] $L^{1}$ relative errors at $T=0.5$ and $T=1$ of RK-DG2 global time-stepping (GTS) and local time-stepping (LTS) schemes.} \label{tab:Advdis} \vspace{-0.2cm}
\end{table}
%
%
%
%
\subsubsection*{Example 2: Burger's equation}
Next, we test the proposed algorithms on the Burgers' equation with a smooth initial condition:  \vspace{-0.1cm}
\begin{equation} \label{eq:BurgersBCs}
u_{t} + \left (\frac{u^{2}}{2}\right )_{x}  =0, \quad  u(x,0)  = \frac{1}{4} + \frac{1}{2} \sin \pi x,  \vspace{-0.1cm}
\end{equation}
in $-1 \leq x \leq 1$. The exact solution of the problem is given by \cite{HJCP87}:  \vspace{-0.1cm}
\begin{equation}
w(x,t)=\sfrac{1}{4} + \sfrac{1}{2} \, v(x-t, \sfrac{t}{2}),  \vspace{-0.1cm}
\end{equation}
in which $v(x,t)$ is the solution of the Burgers' equation with $v(x,0)=\sin \pi x$. We compute $v$ by Newton iterations to solve the characteristic relation:  \vspace{-0.1cm}
$$ v=\sin (\pi x- vt), \quad  0 \leq x <1.  \vspace{-0.1cm}
$$
The solution $v$ in $(-1,0)$ is computed from $v$ in $(0,1)$ via: $v(-x,t) = -v(x,t)$. The solution of \eqref{eq:BurgersBCs} is smooth up to $t=\sfrac{2}{\pi}$ then it develops a moving shock. For details, see \cite{HJCP87}.

We divide the spatial domain into two zones and use the same discretization in space and in time as in Example~1. In Figure~\ref{fig:BurgersSol}, we show the exact solution and the approximate solution by the fourth order LTS algorithm with $\Delta x_{\text{coarse}}=1/40$ and $M=4$. We see that LTS scheme clearly captures the shock with local refinement in space and in time. In Figure~\ref{fig:MassBurgers}, mass evolution as a function of time of different LTS schemes with $\Delta x_{\text{coarse}}=1/40$ and $M=4$ is displayed. The LTS schemes conserve the mass in the region of the LTS interface, and thus in the whole domain. The relative $L^{1}$ errors at $T=0.3$ when the solution is still smooth are shown in Table~\ref{tab:BurgersCR}. Again, the LTS schemes converge at the same order as the associated GTS schemes and the errors are improved as $M$ increases. At $T=1.1,$ the errors in the smooth regions ($0.1$ away from the shock) are as the same magnitude as in the smooth case as displayed in Table~\ref{tab:BurgersShock}. 

\begin{figure}[!http]
\centering
    \begin{subfigure}[b]{0.32\textwidth}
    \centering
        \includegraphics[width=\textwidth]{./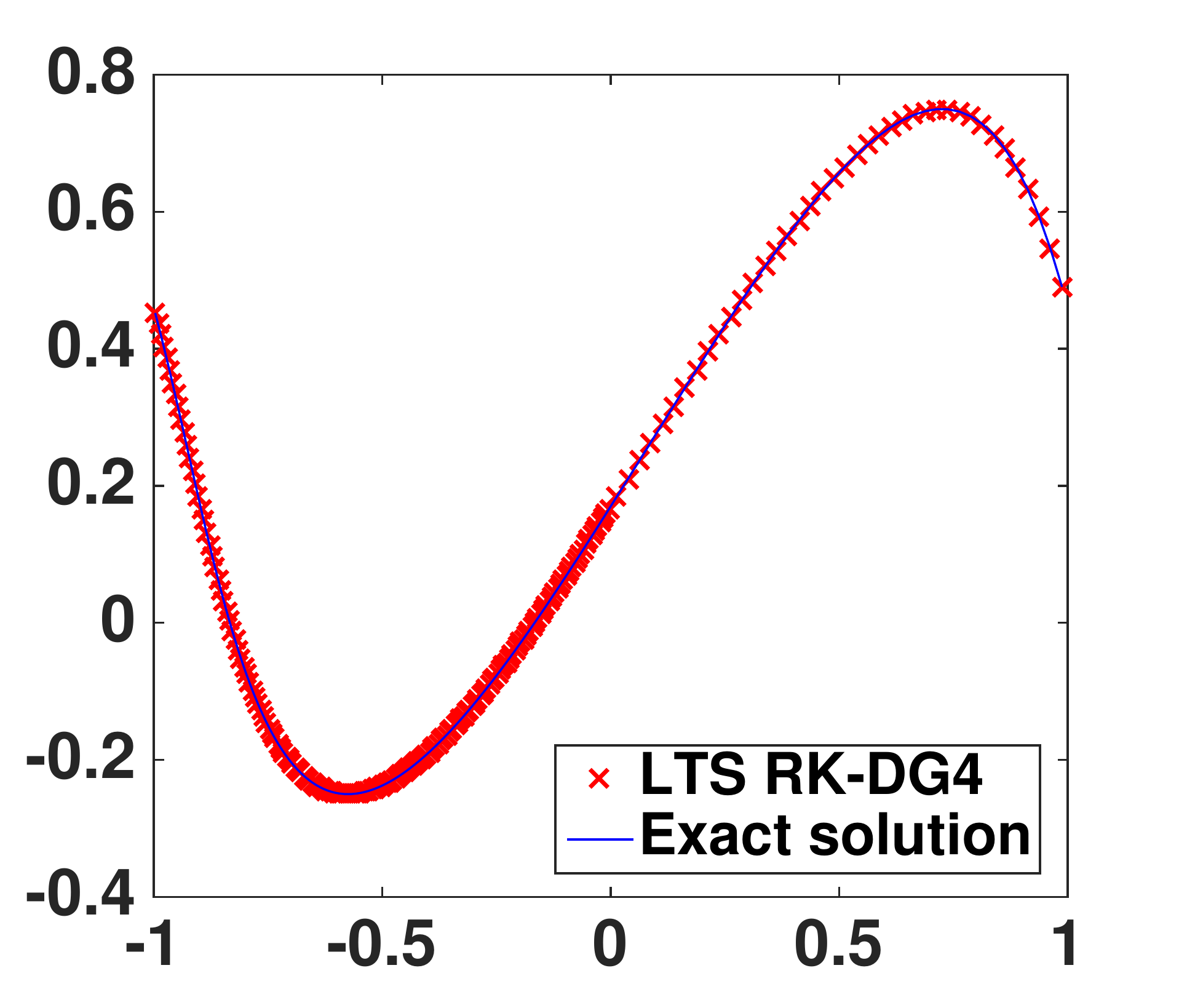}
        \caption{At $T=0.3$}
        \label{fig:Burgers03}
    \end{subfigure}
    \begin{subfigure}[b]{0.32\textwidth}
     \centering
        \includegraphics[width=\textwidth]{./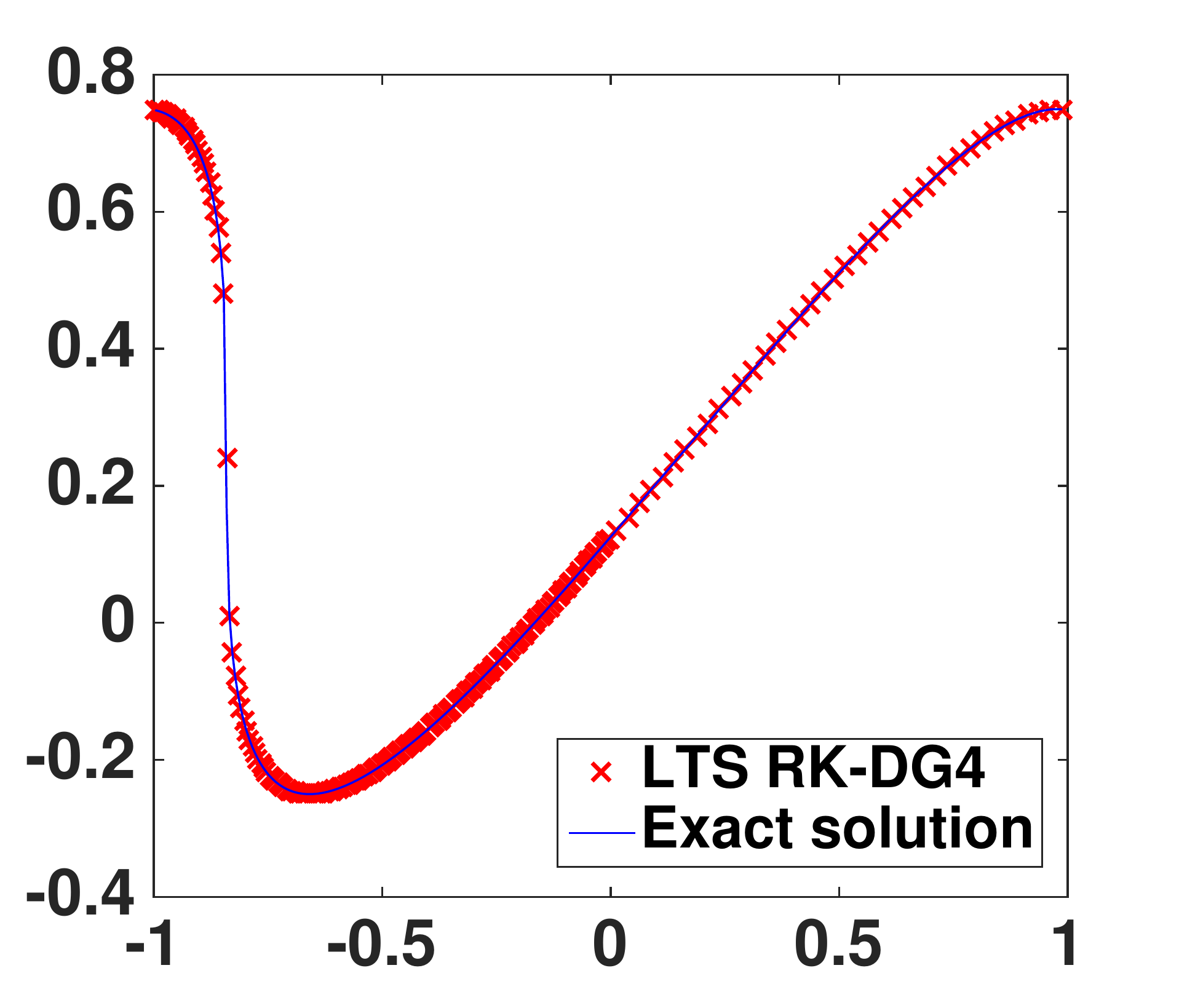}
        \caption{At $T=2/\pi$}
        \label{fig:Burgers2pi}
    \end{subfigure}
    \begin{subfigure}[b]{0.32\textwidth}
     \centering
        \includegraphics[width=\textwidth]{./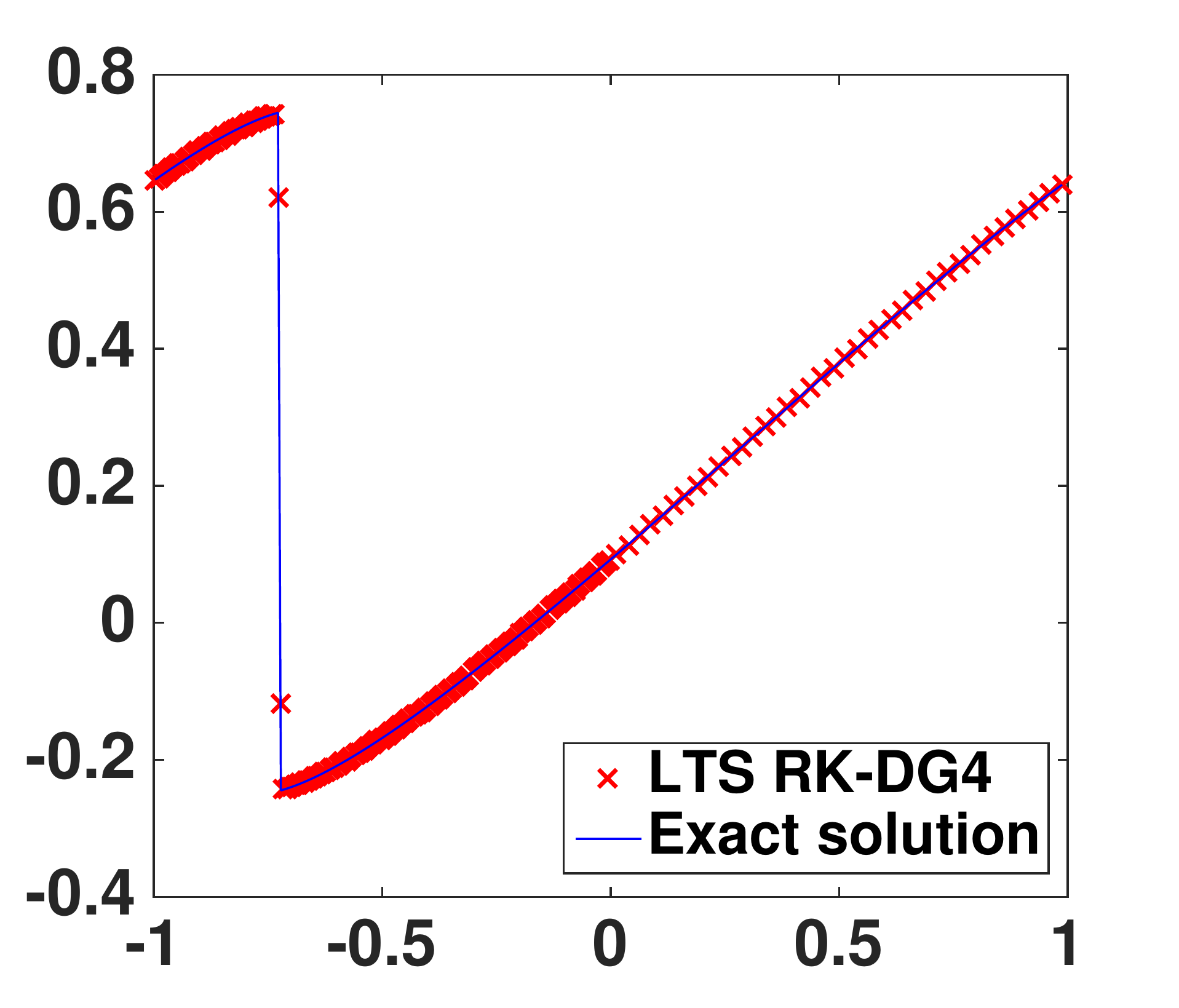}
        \caption{At $T=1.1$}
        \label{fig:Burgers11}
    \end{subfigure} 
    \caption{[Burger's equation] Snapshots of the solution by the fourth order LTS scheme with 2 subdomains, $\Delta x_{1} = \Delta x_{2}/4$ and $\Delta x_{2} =1/40$, }\label{fig:BurgersSol} \vspace{-0.2cm}
\end{figure}

\begin{figure}[!http]
\centering
\includegraphics[scale=0.2]{./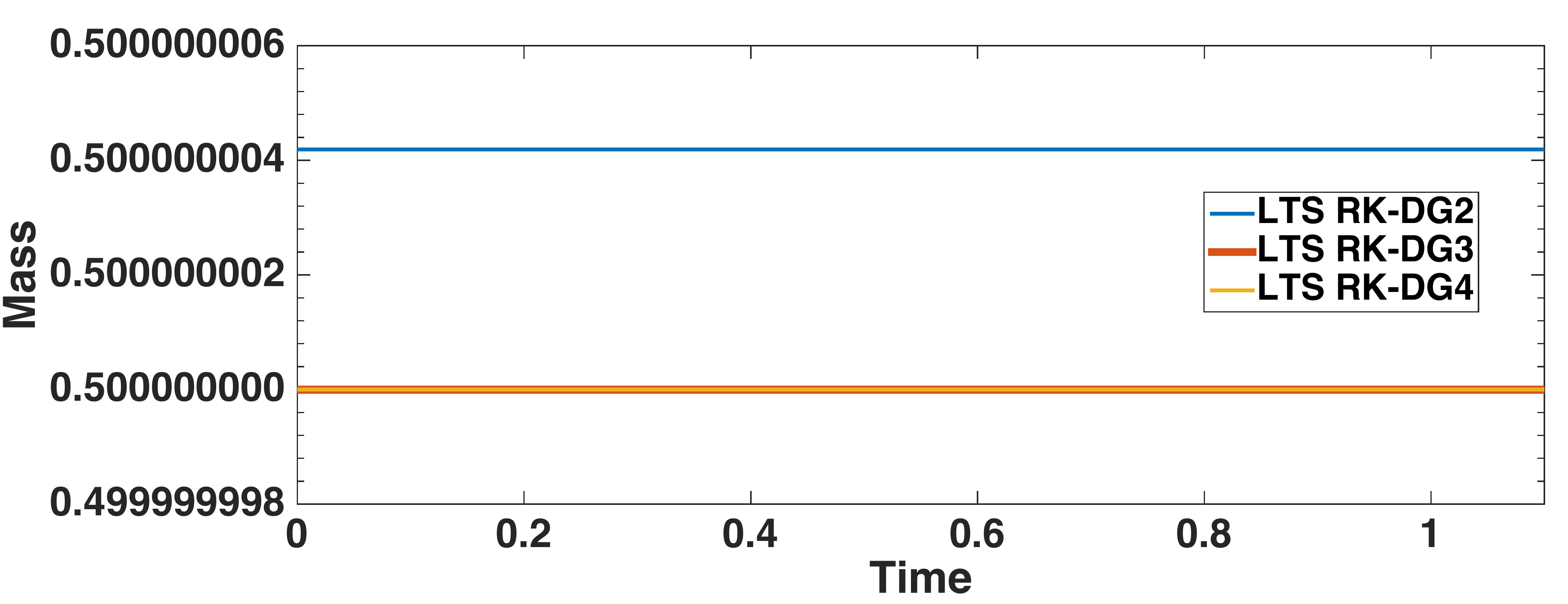}
\caption{[Burger's equation] Time evolution of mass for second, third and fourth order LTS schemes at $\Delta x_{\text{coarse}}=1/40$ and $M=4$. Note that the third and fourth order approximations coincide with each other. } \vspace{-0.2cm}
\label{fig:MassBurgers}
\end{figure}

\begin{table}[!http]\footnotesize
\setlength{\extrarowheight}{3pt}
\centering
\begin{tabular}{|c|c|c c|c c| c c|}
\hline 
$\Delta x_{\text{coarse}}$ & $M$ &  \multicolumn{2}{c}{RK-DG2}& \multicolumn{2}{c|}{RK-DG3} & \multicolumn{2}{c|}{RK-DG4}\\  \cline{3-8}
& & Rel. $L^{1}$ error & [CR] & Rel. $L^{1}$ error & [CR] & Rel. $L^{1}$ error & [CR]   \\ \hline 
\multirow{4}{*}{$1/10$} & $1$  &  6.49e-03   & [1.97] & 3.27e-04   & [3.18] & 2.26e-05  & -- \\ \cline{2-8}
& $2$ &  3.97e-03   & [2.07] & 1.10e-04   &  [3.12] & 6.75e-06   & --  \\ \cline{2-8}
& $4$ &  3.37e-03  & [2.09] &  8.97e-05  &  [3.26] & 5.97e-06  & --  \\ \cline{2-8}
& $8$  &  3.22e-03  & [2.09]  &  8.73e-05  &  [3.28] & 5.93e-06  & --  \\ \hline
\multirow{4}{*}{$1/20$} & $1$  &  1.62e-03   & [2.00]  &  4.10e-05  &  [3.00] & 1.31e-06  & [4.11] \\ \cline{2-8}
& $2$ &  9.63e-04   & [2.04]  & 1.45e-05    &  [2.92] & 4.19e-07 & [4.01]  \\ \cline{2-8}
& $4$ &  8.11e-04  & [2.06]  & 1.17e-05   &  [2.94] &  3.60e-07 & [4.05]  \\ \cline{2-8}
& $8$  &  7.74e-04  & [2.06]  & 1.14e-05   &  [2.94] &  3.57e-07 & [4.05]  \\ \hline
\multirow{4}{*}{$1/40$} & $1$  &  4.03e-04   & [2.01]  & 5.01e-06   &  [3.03] & 8.68e-08  & [3.92] \\ \cline{2-8}
& $2$ &  2.38e-04   & [2.02]  &  1.81e-06  &  [3.00] & 2.63e-08  & [3.99]  \\ \cline{2-8}
& $4$ &  1.99e-04   & [2.03]  &  1.43e-06  &  [3.03] & 2.30e-08   & [3.97]  \\ \cline{2-8}
& $8$  & 1.89e-04   & [2.03]  &  1.40e-06  &  [3.03] & 2.28e-08  & [3.97]  \\ \hline
\multirow{4}{*}{$1/80$} & $1$  & 1.00e-04    & [2.01]  &  6.18e-07   &  [3.02] & 5.44e-09   & [4.00] \\ \cline{2-8}
& $2$ & 5.95e-05    & [2.00]  &  2.25e-07    &  [3.01] & 1.75e-09    & [3.91] \\ \cline{2-8}
& $4$ &  4.95e-05   & [2.01]  & 1.78e-07    &  [3.01] & 1.56e-09   & [3.88] \\ \cline{2-8}
& $8$ &  4.71e-05   & [2.01]  & 1.74e-07    &  [3.01] &  1.55e-09  & [3.88] \\ \hline
\end{tabular}
\caption{[Burger's equation] $L^{1}$ relative errors at $T=0.3$ for different $M$. The rates of convergence ``CR''  for fixed $M$ are shown in square brackets.} \label{tab:BurgersCR} \vspace{-0.2cm}
\end{table}

\begin{table}[!http]\footnotesize
\setlength{\extrarowheight}{3pt}
\centering
\begin{tabular}{|c|c| c|c | c|}
\hline 
\multicolumn{5}{|c|}{At $T=1.1$, errors in smooth region $\| x - \text{shock} \| >=0.1$} \\ \hline
$\Delta x_{\text{coarse}}$ & $M$ & 2nd order LTS & 3rd order LTS & 4th order LTS \\   \hline
\multirow{4}{*}{1/40} & 1 & 6.16e-05  & 4.25e-07   & 4.31e-09 \\ \cline{2-5}
& 2 &  4.67e-05 & 1.73e-07   & 1.71e-09 \\  \cline{2-5}
& 4 &  4.37e-05  & 1.53e-07  & 1.59e-09 \\  \cline{2-5}
& 8 &  4.35e-05   & 1.52e-07   & 1.60e-09 \\  \hline
%
%
%
%
\end{tabular}
\caption{[Burger's equation] $L^{1}$ relative errors in smooth regions at $T=1.1$ of different local time-stepping schemes.} \label{tab:BurgersShock} \vspace{-0.2cm}
\end{table}

\subsection{Euler equations of gas dynamics} \label{subsec:sysNumRe}
We next apply the proposed LTS algorithms to solve a system of one dimensional conservation laws. For the spatial discretization, we employ the DG methods for systems of equations presented in \cite{CSIII} with the local projection limiting in the characteristic fields. The time-stepping is still SSP-RK and thus it is straightforward to apply the proposed LTS algorithms for such a system. We consider the Euler equations of gas dynamics for a polytropic gas: 
\begin{equation}
\bu_{t} + \bff(\bu)_{x} = \pmb{0}, \quad \bu=(\rho, m, E)^{T}, \quad \bff(\bu)=q\bu + (0,P, qP)^{T},
\end{equation}
with $P=(\gamma-1)\left (E-\sfrac{1}{2}\rho q^{2}\right )$. Here $\rho, q, P$ and $E$ are the density, velocity, pressure and total energy, respectively; $m=\rho \, q$ is the momentum and $\gamma$ is the ratio of specific heats. In the following computation, we use $\gamma=1.4$ and present numerical results of applying the second order LTS algorithm to solve Riemann problems of Euler equations and the problem of interaction of blast waves. Note that for these test cases, there is no advantage of using higher order schemes as investigated in \cite{CSIII}. 

\subsubsection*{Example 3: Shock tube problem}
Consider the Riemann problem
\begin{equation}
\bu(x,0)=\left \{ \begin{array}{ll} \bu_{L}, & x<0, \\
\bu_{R}, & x>0,
\end{array} \right .
\end{equation}
with two sets of initial conditions: \vspace{3pt}
\begin{itemize}
\item[a)] The Sod problem \cite{Sod78}: $(\rho_{L}, q_{L}, P_{L})=(1,0,1)$ and $(\rho_{R}, q_{R}, P_{R})=(0.125, 0, 0.10)$;
\item[b)] The Lax problem \cite{Lax54}: $(\rho_{L}, q_{L}, P_{L})=(0.445,0.698,3.528)$ and $(\rho_{R}, q_{R}, P_{R})=(0.5, 0, 0.571)$. 
\end{itemize}

The Sod problem has become a standard test problem of Euler equations with a monotone decreasing density profile. For this problem, we consider two settings of the decomposition into fine and coarse regions: 
\begin{enumerate}
\item two subdomains with the coarse region $[-4.9, -0.5)$ and fine region $[-0.5, 5.1)$;
\item three subdomains with the coarse regions $[-4.9, -2.9)$ and $[4, 5.1]$, and the fine region $[-2.9, 4)$.
\end{enumerate}
The exact solution and approximation solution by the second order LTS algorithm at $T=2.0$ with $\Delta x_{\text{coarse}}=1/5$ and $M=4$ are shown in Figure~\ref{fig:Sod} for the case of two subdomains  and in Figure~\ref{fig:Sod3subs} for the case of three subdomains. The $L^{1}$ relative errors are displayed in Table~\ref{tab:Sod3subs} in which we observe that for both settings, the errors decrease as $M$ increases, especially for the three subdomain case, and the order of convergence is first order as the solution is discontinuous. The three subdomain setting, as expected, has a better performance since the fine region includes the contact discontinuities and the corners of rarefaction waves. 

\begin{figure}[!ht]
    \centering
    \begin{subfigure}[b]{0.32\textwidth}
        \includegraphics[width=\textwidth]{./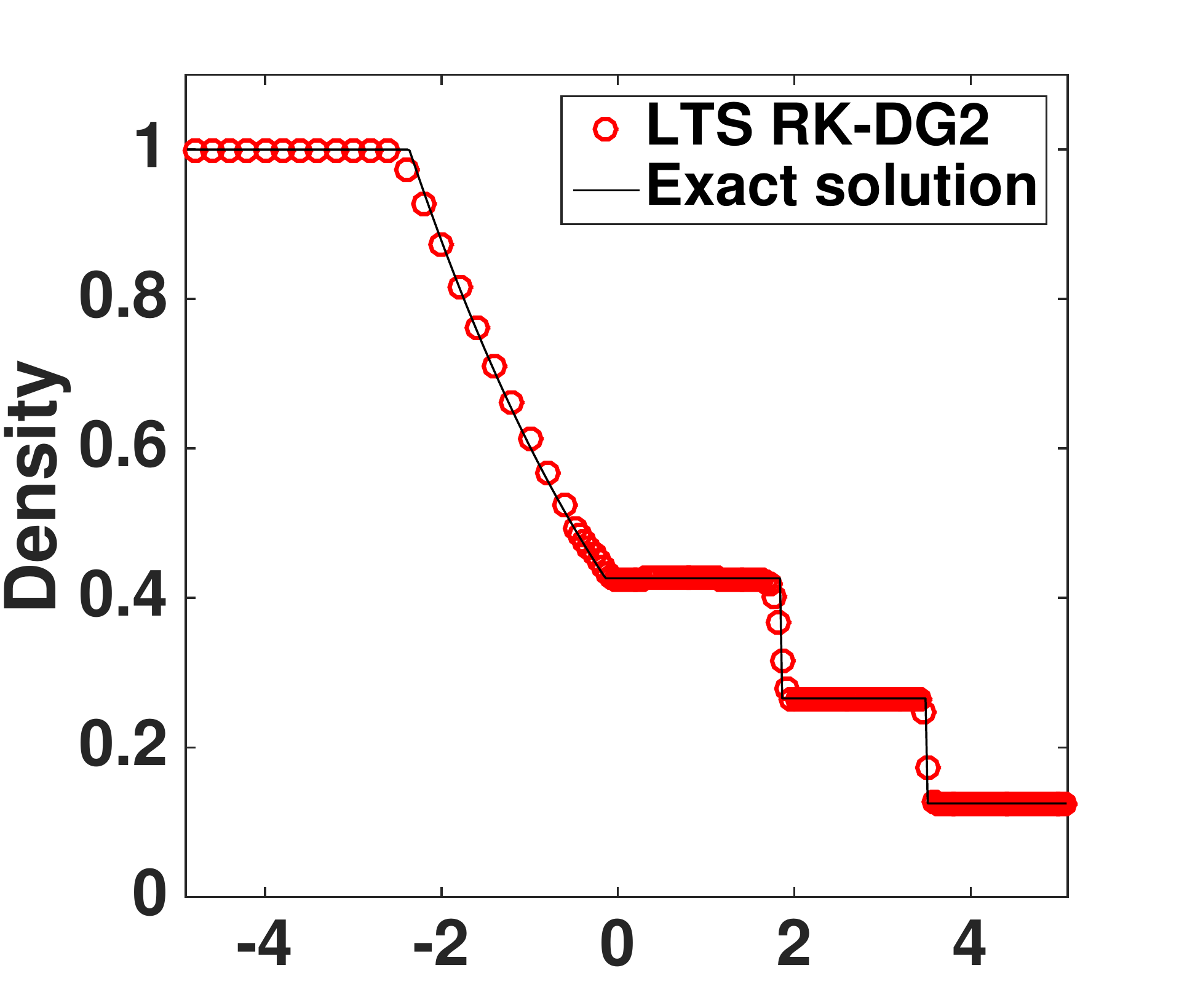}
    \end{subfigure}
    \begin{subfigure}[b]{0.32\textwidth}
        \includegraphics[width=\textwidth]{./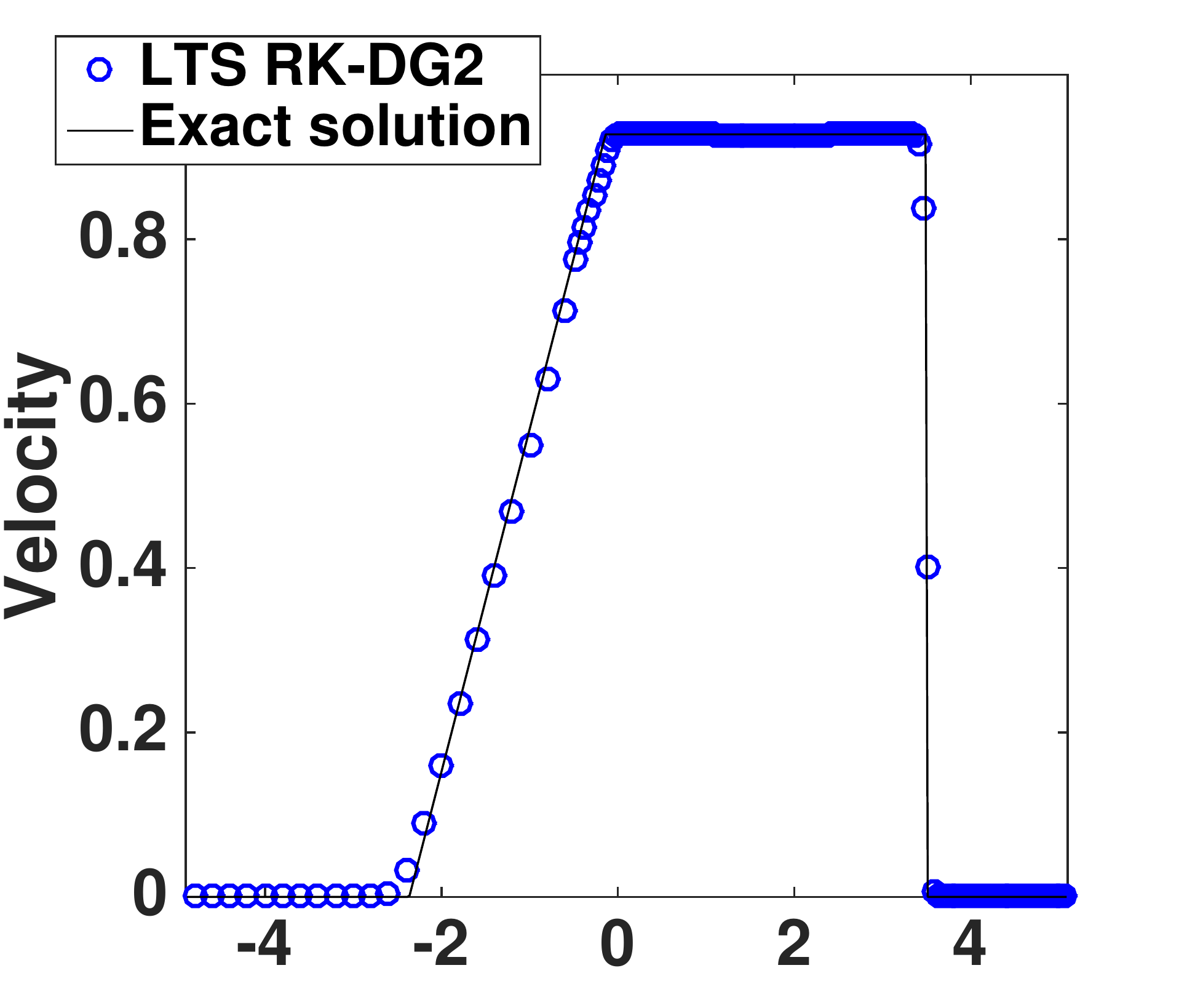}
    \end{subfigure}
    \begin{subfigure}[b]{0.32\textwidth}
        \includegraphics[width=\textwidth]{./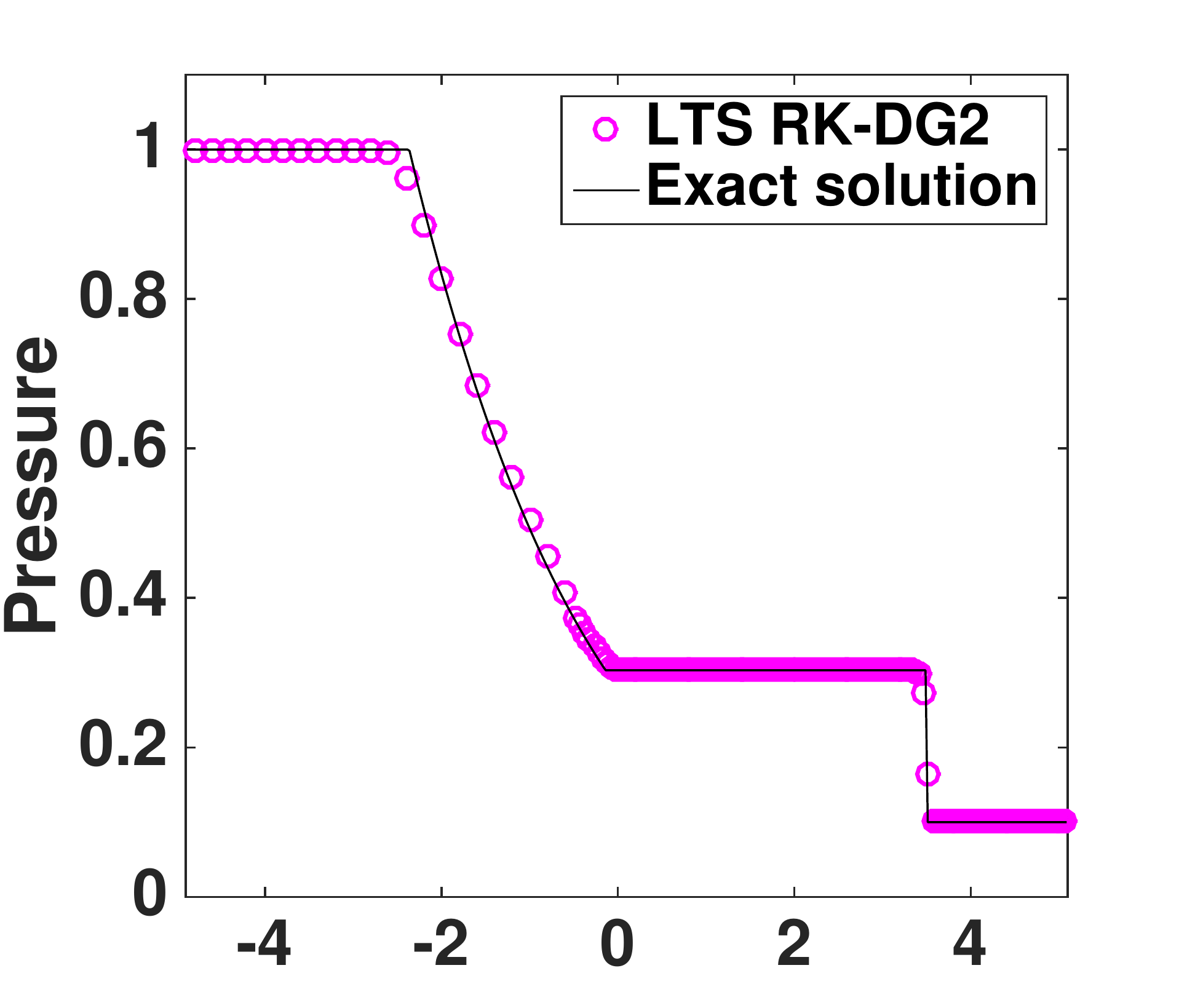}
    \end{subfigure}
    \caption{[Sod shock tube problem] Snapshots of the density, velocity and pressure at $T=2.0$ by the second order LTS scheme with 2 subdomains, $\Delta x_{\text{coarse}}=1/5$ and $M=4$.}\label{fig:Sod} \vspace{-0.2cm}
\end{figure}

\begin{figure}[!ht]
    \centering
    \begin{subfigure}[b]{0.32\textwidth}
        \includegraphics[width=\textwidth]{./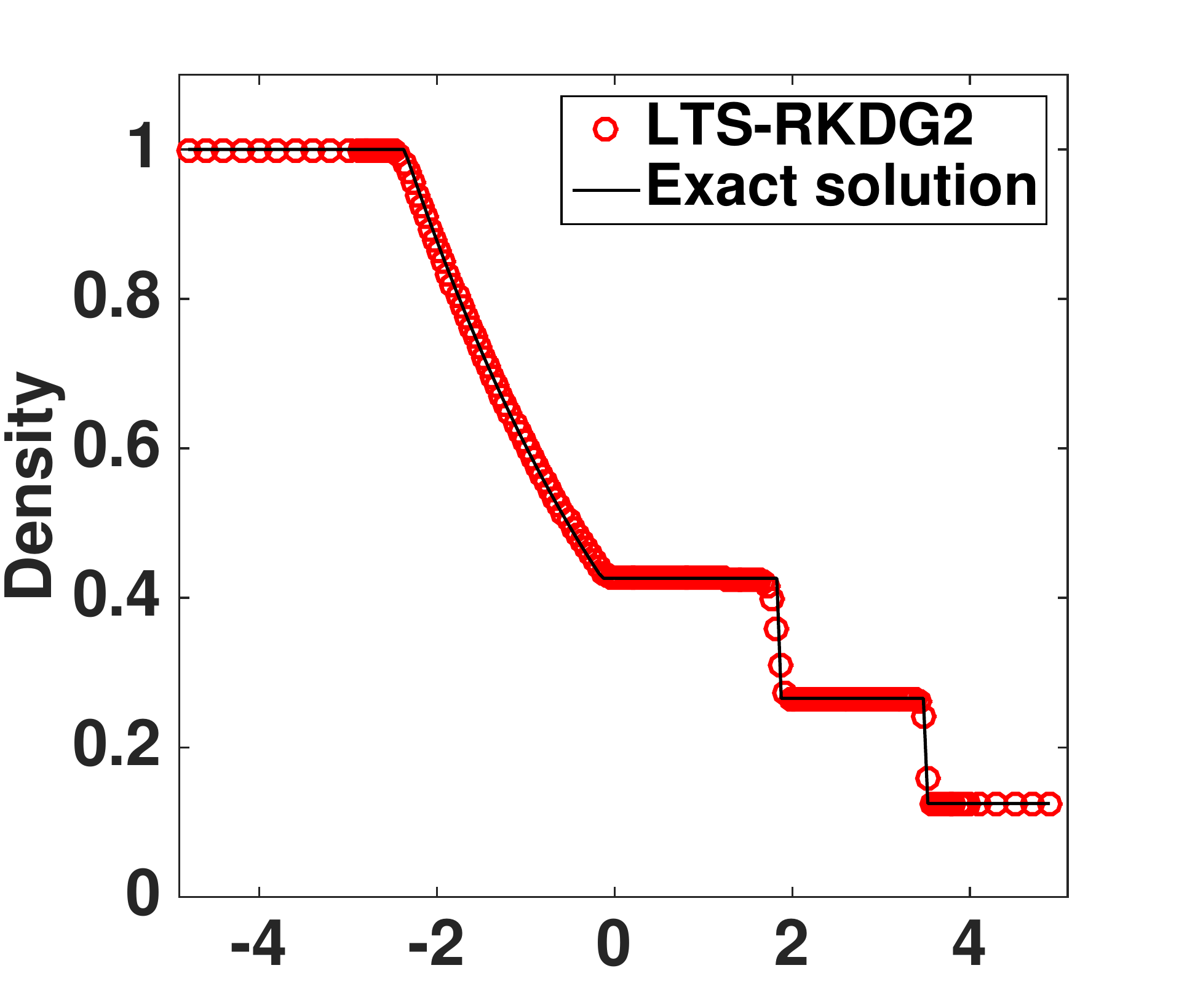}
    \end{subfigure}
    \begin{subfigure}[b]{0.32\textwidth}
        \includegraphics[width=\textwidth]{./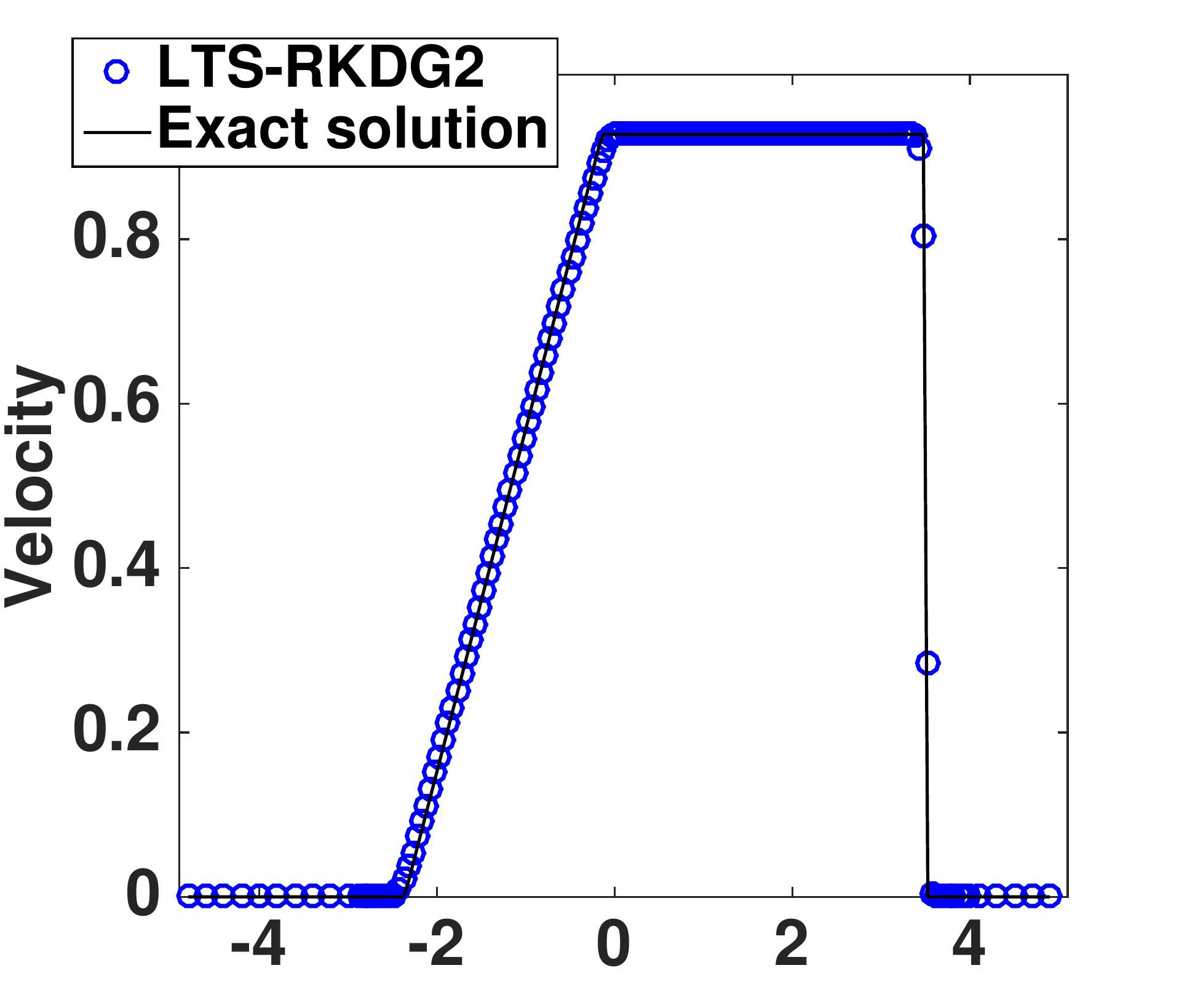}
    \end{subfigure}
    \begin{subfigure}[b]{0.32\textwidth}
        \includegraphics[width=\textwidth]{./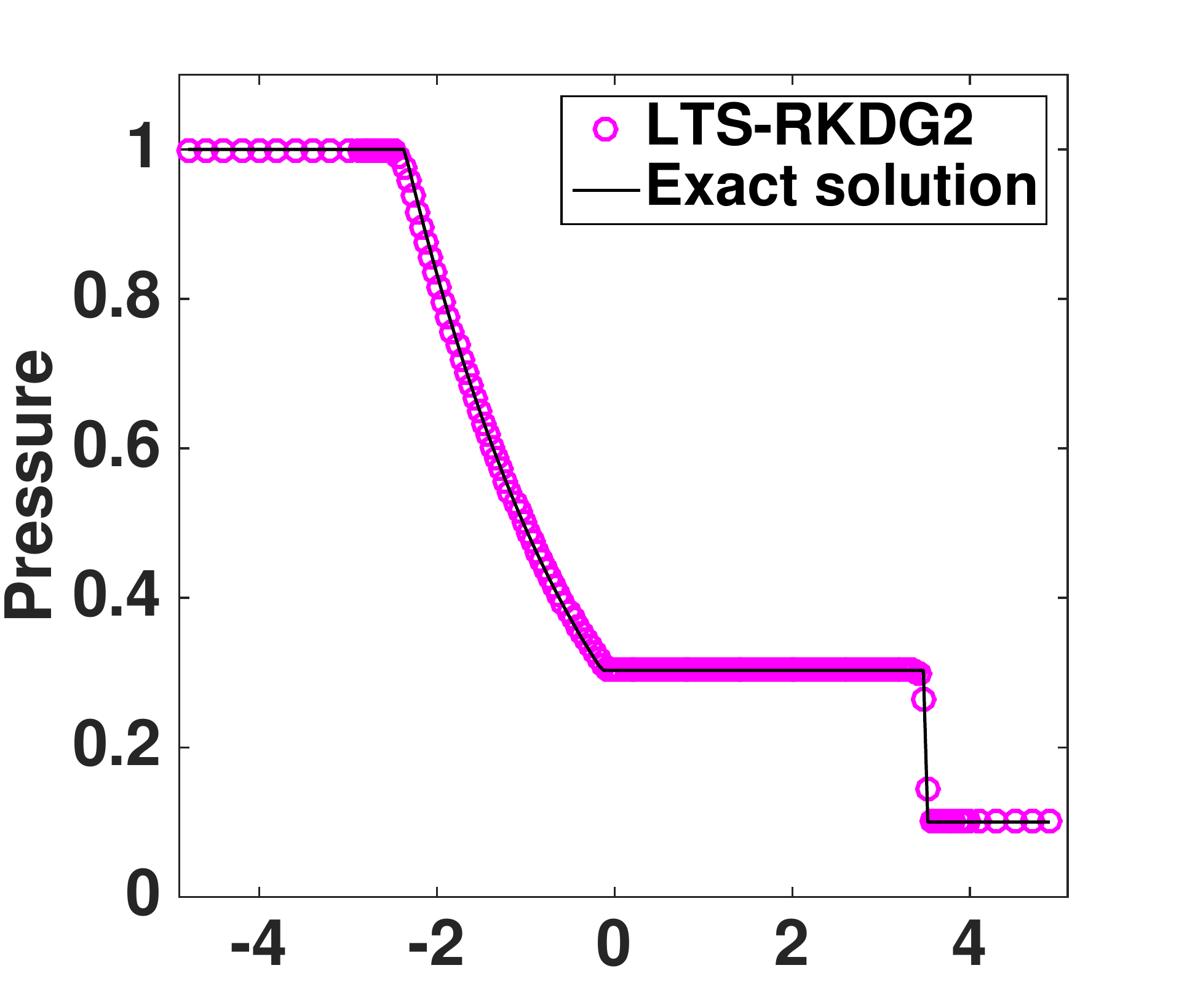}
    \end{subfigure}
    \caption{[Sod shock tube problem] Snapshots of the density, velocity and pressure at $T=2.0$ by the second order LTS scheme with 3 subdomains, $\Delta x_{\text{coarse}}=1/5$ and $M=4$.}\label{fig:Sod3subs} \vspace{-0.2cm}
\end{figure}

\begin{table}[!ht]\footnotesize
\setlength{\extrarowheight}{3pt}
\centering
\begin{tabular}{|c|c|c|c|c|c|c|c|}
\hline 
\multirow{2}{*}{$\Delta x_{\text{coarse}}$} & \multirow{2}{*}{M} &  \multicolumn{2}{c|}{Density} &  \multicolumn{2}{c|}{Velocity} &  \multicolumn{2}{c|}{Pressure} \\ \cline{3-8}
 & & 2 domains & 3 domains & 2 domains & 3 domains & 2 domains & 3 domains \\ \hline
\hline
\multirow{4}{*}{1/5} & 1 &  \multicolumn{2}{c|}{1.79e-02}  & \multicolumn{2}{c|}{3.88e-02} & \multicolumn{2}{c|}{1.76e-02}  \\ \cline{2-8} 
& 2 & 8.96e-03  & 8.28e-03  & 2.03e-02  &1.75e-02  & 8.46e-03   &7.22e-03 \\ \cline{2-8} 
& 4 & 6.35e-03   & 4.25e-03 & 1.39e-02   & 8.82e-03 & 6.82e-03  & 3.62e-03 \\ \cline{2-8} 
& 8 & 6.06e-03 & 2.22e-03 & 1.27e-02  & 4.46e-03 & 7.34e-03  & 1.81e-03 \\ \hline 
\multirow{4}{*}{1/10} & 1 & \multicolumn{2}{c|}{8.28e-03}  & \multicolumn{2}{c|}{1.78e-02}  & \multicolumn{2}{c|}{7.24e-03}   \\ \cline{2-8} 
& 2 & 4.41e-03   & 4.23e-03  &  9.79e-03  & 8.83e-03  & 3.89e-03    & 3.61e-03 \\ \cline{2-8} 
& 4 & 2.88e-03  & 2.21e-03  & 6.11e-03   & 4.46e-03  & 2.76e-03  & 1.80e-03 \\ \cline{2-8} 
& 8 &  2.50e-03 & 1.16e-03 & 5.19e-03  &  2.30e-03 & 2.80e-03   &  9.03e-04 \\ \hline 
\end{tabular}
\caption{[Sod shock tube problem] $L^{1}$ relative errors at $T=2.0$ of the second order LTS algorithm for the Sod shock tube problem.}
\label{tab:Sod3subs} \vspace{-0.2cm}
\end{table}

For the Lax problem, the density profile has a ``built-up'' intermediate state, thus we divide the domain into two subdomains with the coarse region $[-4.9, 0)$ and the fine region $[0, 5.1)$. The second order LTS approximate solution at $T=1.3$ with $\Delta x_{\text{coarse}}=1/5$ and $M=4$ is shown in Figure~\ref{fig:Lax} together with the exact solution. We observe that the LTS scheme captures very well the ``built-up'' density profile with local refinement in space and in time. The $L^{1}$ relative errors are presented in Table~\ref{tab:Lax} that match our expectation. 

\begin{figure}[!ht]
    \centering
    \begin{subfigure}[b]{0.32\textwidth}
        \includegraphics[width=\textwidth]{./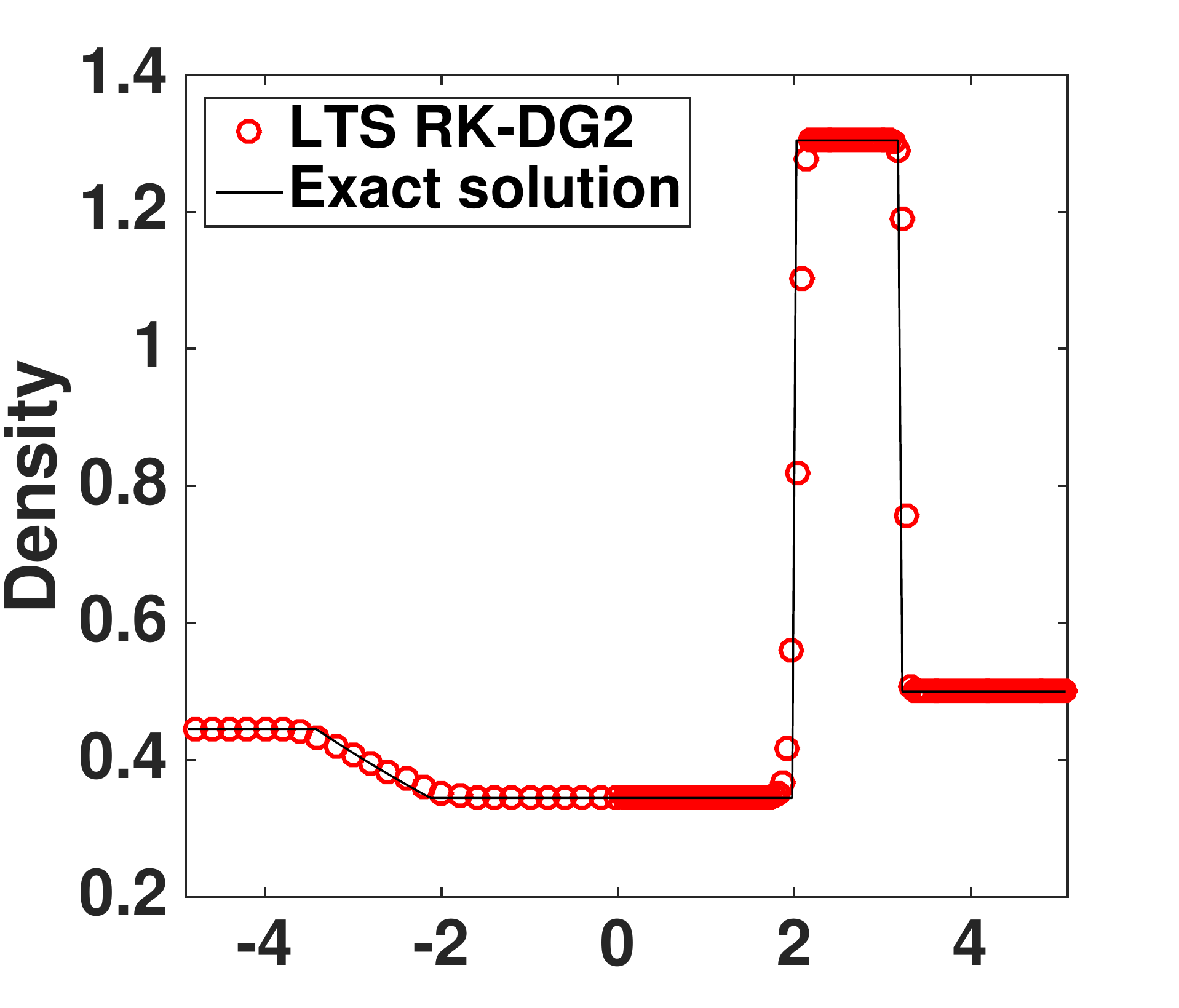}
    \end{subfigure}
    \begin{subfigure}[b]{0.32\textwidth}
        \includegraphics[width=\textwidth]{./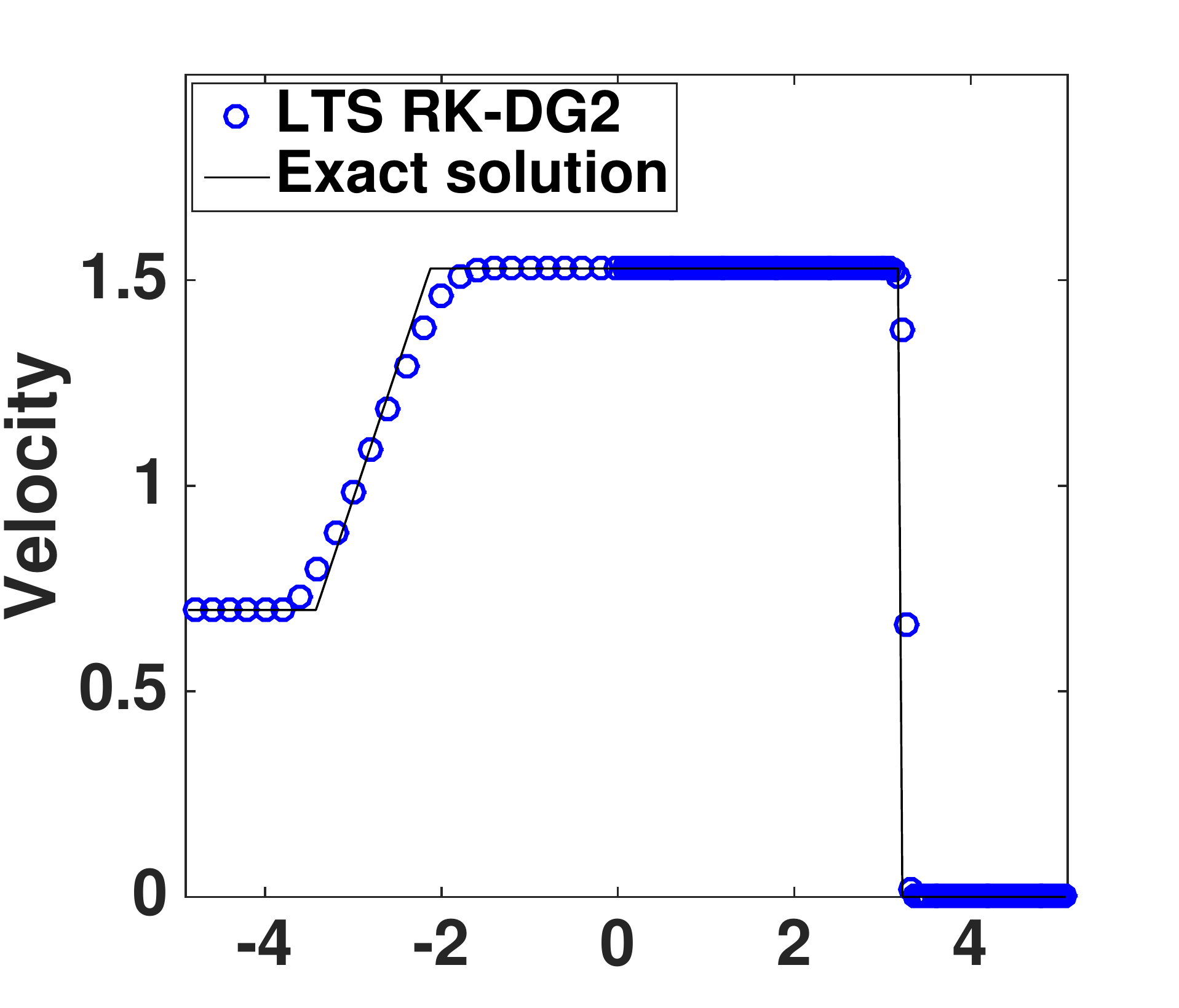}
    \end{subfigure}
    \begin{subfigure}[b]{0.32\textwidth}
        \includegraphics[width=\textwidth]{./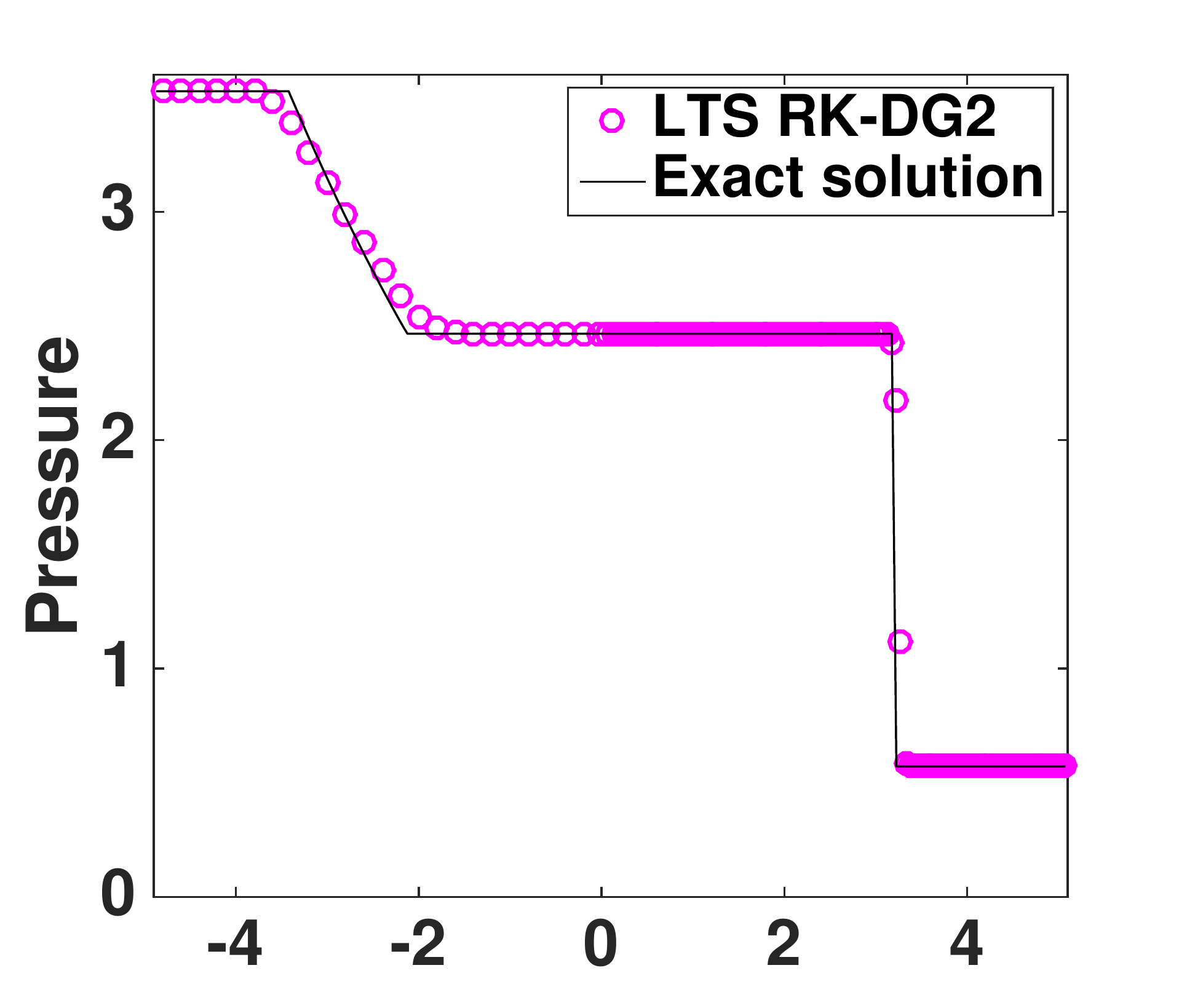}
    \end{subfigure}
    \caption{[Lax shock tube problem] Snapshots of the density, velocity and pressure at $T=1.3$ by the second order LTS scheme with two subdomains, $\Delta x_{\text{coarse}}=1/5$ and $M=4$.}\label{fig:Lax} \vspace{-0.2cm}
\end{figure}

\begin{table}[!ht]\footnotesize
\setlength{\extrarowheight}{3pt}
\centering
\begin{tabular}{|c|c|c|c|c|}
\hline 
$\Delta x_{\text{coarse}}$& M & Density & Velocity & Pressure \\ \hline \hline
\multirow{4}{*}{1/5} & 1 & 4.74e-02  &  2.15e-02 & 1.44e-02 \\ \cline{2-5} 
& 2 & 2.94e-02  & 1.69e-02  & 1.07e-02 \\ \cline{2-5} 
& 4 & 2.11e-02  & 1.73e-02  & 9.12e-03 \\ \cline{2-5} 
& 8 & 1.18e-02   & 1.37e-02  & 7.63e-03 \\ \hline 
\multirow{4}{*}{1/10} & 1 & 2.52e-02 & 1.20e-02  & 6.55e-03  \\ \cline{2-5} 
& 2 & 1.51e-02 & 9.07e-03 & 4.74e-03 \\ \cline{2-5} 
& 4 & 8.88e-03 & 7.13e-03  & 3.91e-03\\ \cline{2-5} 
& 8 & 5.70e-03  & 7.41e-03   & 4.19e-03 \\ \hline 
\end{tabular}
\caption{[Lax shock tube problem] $L^{1}$ relative errors at $T=1.3$ of the second order LTS algorithm.}
\label{tab:Lax} \vspace{-0.2cm}
\end{table}

\subsubsection*{Example 4: Interaction of blast waves}
We finally consider the problem of two interacting blast waves: 
\begin{equation}
\bu(x,0)=\left \{ \begin{array}{ll} \bu_{L}, & 0\leq x<0.1, \\
\bu_{M}, & 0.1 \leq x < 0.9, \\
\bu_{R}, & 0.9 \leq x<1,
\end{array} \right .
\end{equation}
with $(\rho_{L}, q_{L}, P_{L})=(1,0,10^{3})$, $(\rho_{M}, q_{M}, P_{M})=(1,0, 10^{-2})$ and $(\rho_{R}, q_{R}, P_{R})=(1, 0, 100)$. Reflection boundary conditions are applied at $x=0$ and $x=1$.  For details, see \cite{WC84, HJCP87}. 

We divide $\Omega=[0,1]$ into 3 subdomains $\Omega_{1}=[0, 0.2)$, $\Omega_{2}=[0.2, 0.9)$ and $\Omega_{3}=[0.9, 1]$. The mesh and time step sizes in $\Omega_{2}$ are $M$ times smaller than those in $\Omega_{1}$ and $\Omega_{3}$.  The solutions at $T=0.038$ with the second order global time-stepping ($M=1$) and local time-stepping ($M=2$) are shown in Figure~\ref{fig:blastwave}. We see that the LTS scheme with a local refinement in space and time gives a much better resolution, especially for the density profile.

\begin{figure}[!ht]
    \centering
    \begin{subfigure}[b]{0.32\textwidth}
        \includegraphics[width=\textwidth]{./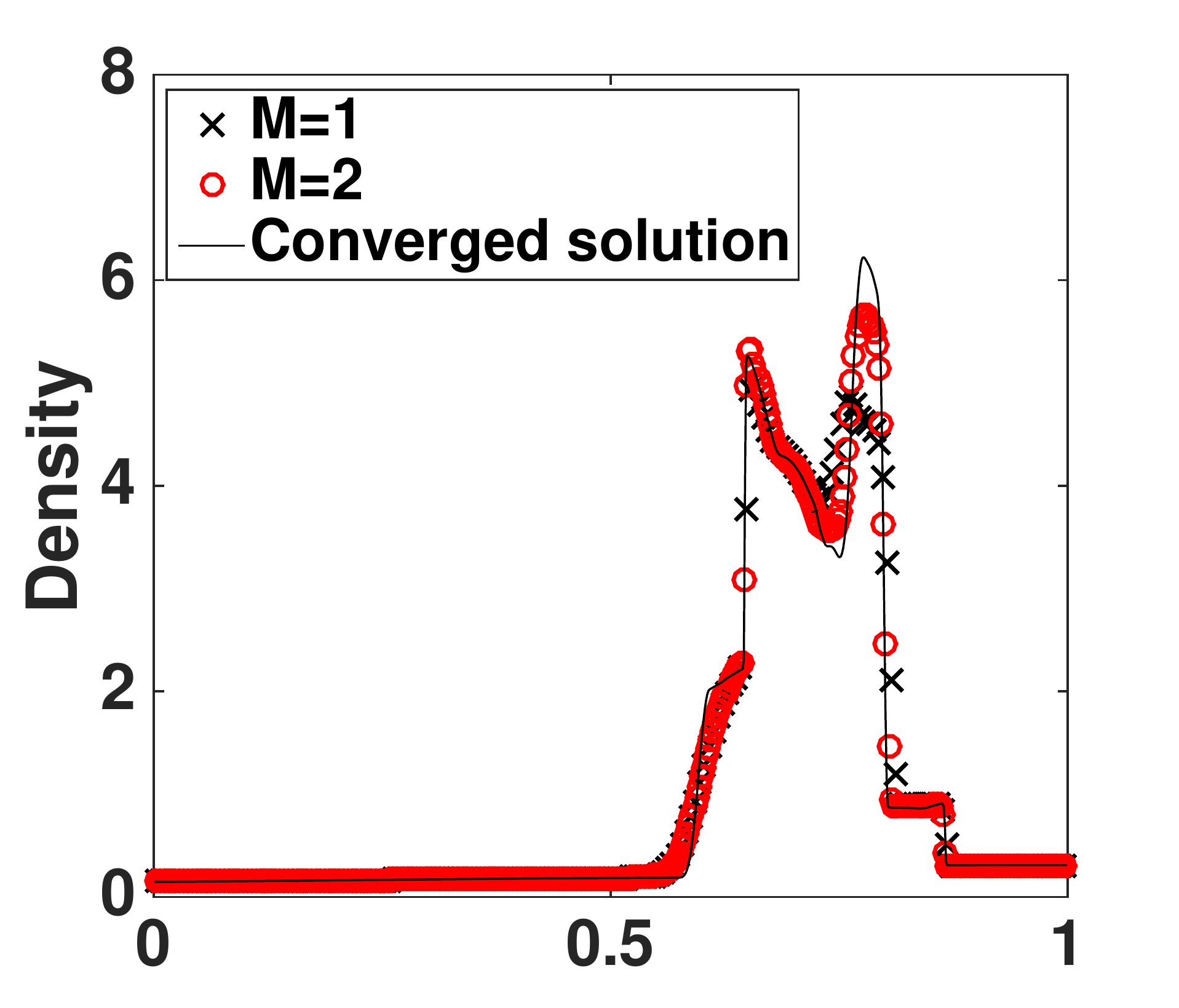}
    \end{subfigure}
    \begin{subfigure}[b]{0.32\textwidth}
        \includegraphics[width=\textwidth]{./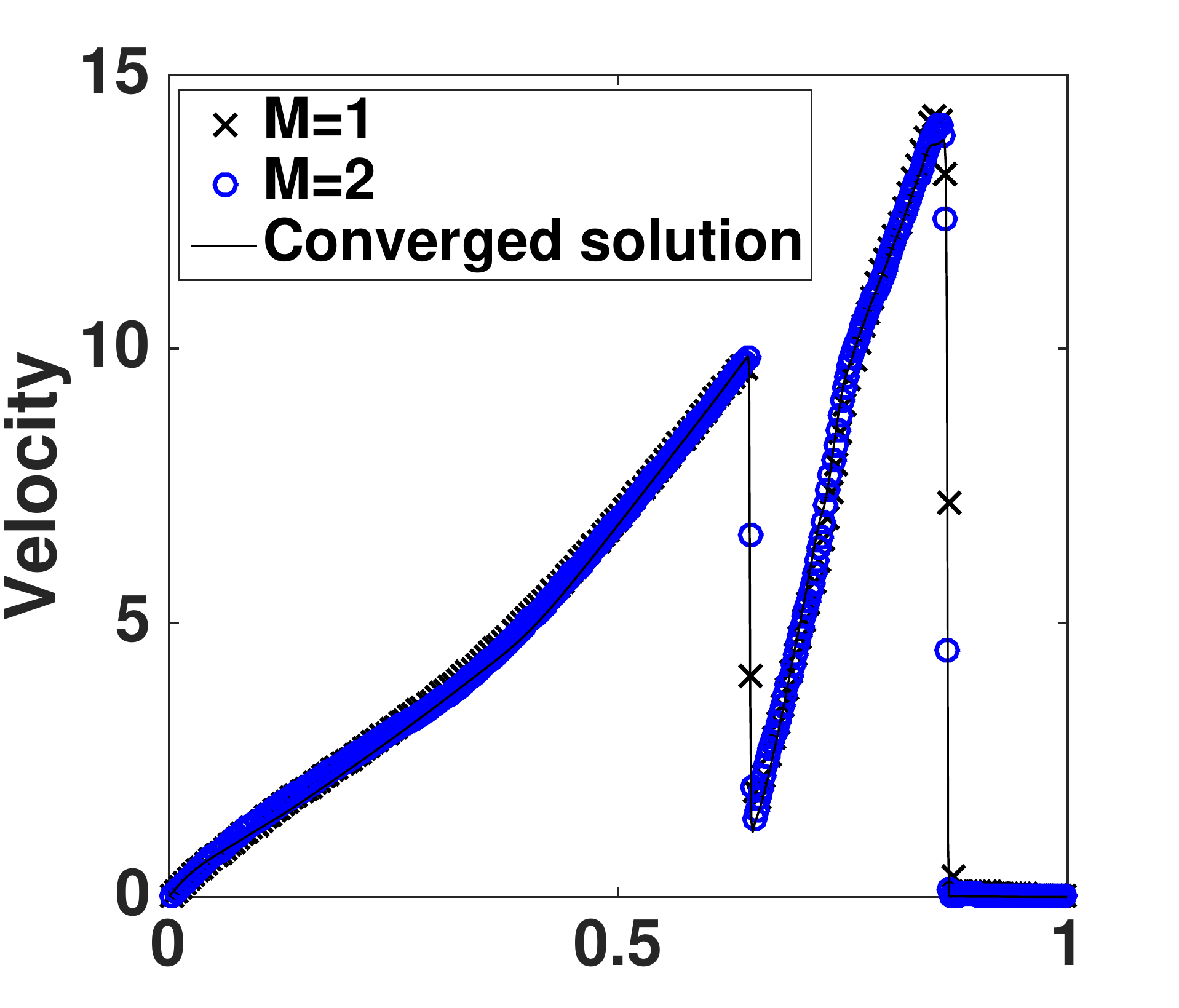}
    \end{subfigure}
    \begin{subfigure}[b]{0.32\textwidth}
        \includegraphics[width=\textwidth]{./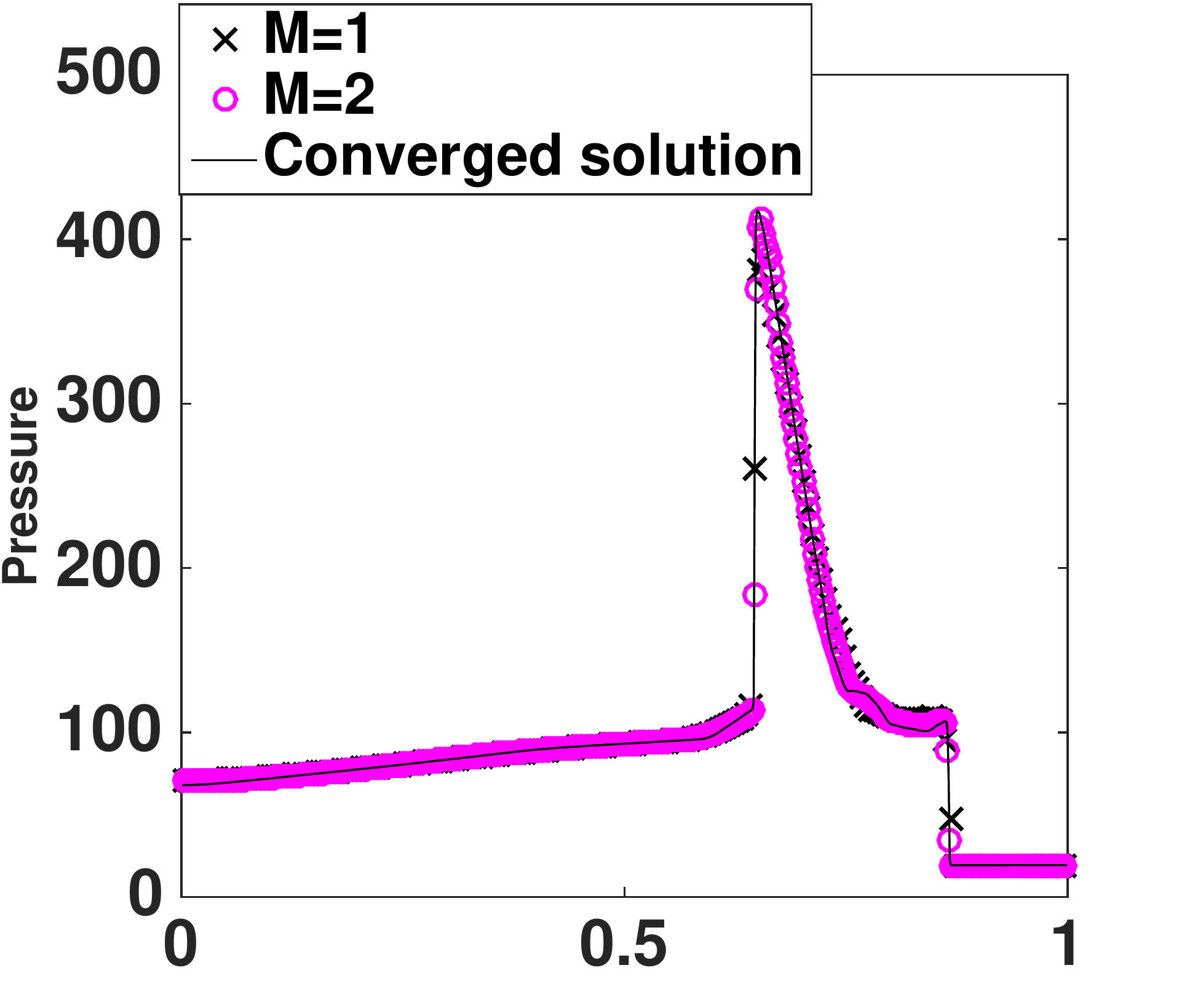}
    \end{subfigure}
    \caption{[Blast waves' interaction problem] Snapshots of the density, velocity and pressure at $T=0.038$ by the second order GTS ($M=1$) and LTS ($M=2$) schemes with $\Delta x_{\text{coarse}}=1/200$. }\label{fig:blastwave} \vspace{-0.2cm}
\end{figure}

\section{Conclusion}
In this work, high order explicit local time-stepping algorithms have been proposed and analyzed for hyperbolic conservation laws. The approaches are of predictor-corrector type, and algorithms of up to fourth order accuracy are constructed in a general setting of Runge-Kutta discontinuous Galerkin methods with the modified {\em minmod} limiter. With our LTS schemes, different time-step sizes can be used based on a local CFL condition instead of the more restrictive global CFL condition. Thus, they outperform the global time-stepping for simulations on multi-resolution meshes or of multiple scales. In addition, we rigorously prove the conservation property and nonlinear stability of these schemes. Numerical results confirm their accuracy and efficiency. Future work includes the coupling of adaptive multi-resolution meshes with our local time-stepping to carry out simulations in parallel and further investigations on the numerical performance of the proposed approaches for large scale simulations on modern supercomputer systems. 

\bibliographystyle{siam}

\section*{Appendices}
\begin{appendices}
\renewcommand{\theequation}{A.\arabic{equation}}
\setcounter{equation}{0}  

\section{SSP-RK time stepping schemes}\label{SSPRKappend}
We first present the SSP-RK(2,2) and SSP-RK(3,3) schemes for solution of the system \eqref{eq:auto}, which are \emph{optimal} in the sense that the number of stages equals the order of accuracy and the coefficients $\beta_{i,\nu}$ are nonnegative~\cite{GS98}. In both cases, the schemes possess the SSP coefficient $\iC=1$.

\emph{Second order SSP-RK(2,2)}: $\alpha_{10}=\beta_{10}=1$, $\alpha_{20}=\alpha_{21}=\beta_{21}=\sfrac{1}{2}$ and $\beta_{20}=0$, which is equivalent to the Heun's method:
\begin{equation} \label{eq:SSPRK2}
\begin{array}{ll}
\bU_{h}^{n,(1)} &\hspace{-0.2cm}=\bU_{h}^{n} +\Delta t \bL_{h}(\bU_{h}^{n}), \vspace{0.1cm}\\
\bU_{h}^{n+1}&\hspace{-0.2cm}=\sfrac{1}{2}\, \bU_{h}^{n} + \sfrac{1}{2} \left (\bU_{h}^{n,(1)} +\Delta t \bL_{h}(\bU_{h}^{n,(1)})\right). 
\end{array}
\end{equation}

\emph{Third order SSP-RK(3,3)}: $\alpha_{10}=\beta_{10}=1$, $\alpha_{20}=\sfrac{3}{4}, \, \alpha_{21}=\beta_{21}=\sfrac{1}{4}$, $\alpha_{30}=\sfrac{1}{3}, \, \alpha_{32}=\beta_{32}=\sfrac{2}{3}$ and $\beta_{20}=\beta_{30}=\alpha_{31}=\beta_{31}=0$, or explicitly:
\begin{equation}
\begin{array}{ll}
\bU_{h}^{n,(1)}   &\hspace{-0.2cm}=\bU_{h}^{n} +\Delta t \bL_{h}(\bU_{h}^{n}), \vspace{0.1cm}\\
\bU_{h}^{n,(2)}&\hspace{-0.2cm}=\sfrac{3}{4}\, \bU_{h}^{n} + \sfrac{1}{4} \left (\bU_{h}^{n,(1)} +\Delta t \bL_{h}(\bU_{h}^{n,(1)})\right), \vspace{0.1cm}\\
\bU_{h}^{n+1}&\hspace{-0.2cm}=\sfrac{1}{3}\, \bU_{h}^{n} + \sfrac{2}{3} \left (\bU_{h}^{n,(2)} +\Delta t \bL_{h}(\bU_{h}^{n,(2)})\right). 
\end{array}
\end{equation}


%
For higher order schemes $r\geq 4$, we can not avoid negative $\beta_{i\nu}$ without using additional stages. We shall use the SSP-RK(5,4) proposed in \cite{KYK14} with the SSP coefficient $\iC=1.652$, and the coefficients of the scheme are listed in Table~\ref{tab:RK4}.
\begin{table}[!ht]\scriptsize
\setlength{\extrarowheight}{3pt}
\begin{tabular}{c  c  c  c  c} 
\hline 
$\alpha_{i\nu}$ &  &  &  & \\
1 & 0  & 0 & 0 & 0\\
0.261216512493821   & 0.738783487506179 & 0 & 0 & 0\\
0.623613752757655 & 0 & 0.376386247242345 & 0  & 0\\
0.444745181201454 & 0.120932584902288 & 0 & 0.434322233896258 & 0 \\
0.213357715199957 & 0.209928473023448 & 0.063353148180384 & 0 & 0.513360663596212 \\
$\beta_{i\nu}$ &  &  &  &  \\
0.605491839566400 & 0  & 0 & 0 & 0\\
0 & 0.447327372891397 & 0 & 0 & 0\\
0.000000844149769 & 0 & 0.227898801230261 & 0 & 0  \\
0.002856233144485 & 0.073223693296006 & 0 & 0.262978568366434 & 0\\
0.002362549760441 & 0.127109977308333 & 0.038359814234063 & 0  & 0.310835692561898 \\ \hline
\end{tabular}
\caption{\cite{KYK14} Coefficients of the SSP-RK (5,4) scheme with $\iC=1.652$.} \label{tab:RK4} \vspace{-0.4cm}
\end{table}

\renewcommand{\theequation}{B.\arabic{equation}}
\setcounter{equation}{0}  

\section{Derivation of the predictors} \label{PREDappend}
We derive the predictors up to fourth order accuracy $(k=1,2,3)$ in the DG-RK setting. To simplify the notation, in the following, we consider an arbitrary equation in the system \eqref{eq:auto} and write it in the form:
\begin{equation} \label{eq:autoj}
\partial_{t} w_{j} = L_{j}(\bw), 
\end{equation}
where $\bw =\left (w_{j}(t)\right )_{\forall\, j}$ ($w_{j}$ represents $u_{j}^{(l)}$ and we have dropped the superscript $(l)$ for simplicity) and $L_{j}$ is a multivariable, real-valued function. 

Given the time partition with coarse and fine time steps as defined in Section~\ref{sec:LTS} and assume that the solution $\bw^{n}$ at $t^{n}$ is known, we shall construct the approximation of $\bw$ at the interface $x_{j+\ahalf}$ at the intermediate time levels $t^{n,p}$ for $p=1, \ldots, M-1$. Performing Taylor expansion of $w_{j}$ at $t^{n}$ yields:
\begin{equation} \label{aTaylor}
w_{j}(t) = w_{j}(t^{n}) + (t-t^{n}) \frac{dw_{j}}{dt}(t^{n}) + \ldots + \frac{1}{k!} (t-t^{n})^{k} \frac{d^{(k)}w_{j}}{dt}(t^{n}) + O\left ((\Delta t)^{k+1}\right ).
\end{equation}
The time derivatives of $w_{j}$ up to order $k$ are approximated by the SSP-RK solution of the first $(s-1)$ stages with a coarse time step, $w_{j}^{n, (i)}$ for $i=1, \ldots, s-1$. These approximations are detailed in the following for second, third and fourth order schemes respectively. 
\subsection{Predictor for the SSP-RK(2,2) scheme}
We obtain the approximation for $w_{j}^{n,p}=w_{j}^{n,p, (0)}$ by truncating \eqref{aTaylor} to the second term:
\begin{equation} \label{eq:pRK2-s0}
w_{j}^{n,p, (0)} = w_{j}^{n, (0)} + \frac{p\Delta t}{M} \partial_{t} w_{j}^{n, (0)} + O(\Delta t_{p}^{2}), \quad p=0, 1, \ldots, M-1,
\end{equation}
where $\Delta t_{p}=p\sfrac{\Delta t}{M}$. To compute the first time derivative, we first notice that $\partial_{t} w_{j}^{n, (0)} = L_{j}(\bw^{n, (0)})$
according to \eqref{eq:autoj}. Thus, by using the solution at stage 1 of SSP-RK(2,2) with a coarse time step
$$ w_{j}^{n, (1)}= w_{j}^{n, (0)} + \Delta t  L_{j}(\bw^{n, (0)}),
$$ 
we deduce that
\begin{equation} \label{eq:Ut}
\partial_{t} w_{j}^{n, (0)} = L_{j}(\bw^{n, (0)}) = \frac{w_{j}^{n, (1)} - w_{j}^{n, (0)}}{\Delta t} + O\left (\Delta t\right ).
\end{equation}
Substituting this into \eqref{eq:pRK2-s0} yields:
\begin{equation} \label{eq:pRK2-s0b}
w_{j}^{n,p, (0)} = w_{j}^{n, (0)} + \frac{p}{M} \left (w_{j}^{n, (1)} - w_{j}^{n, (0)} \right ) + O(\Delta t^{2}). 
\end{equation}
We also need to predict $w_{j}^{n,p, (1)}$, the solution at stage 1 at intermediate time levels, for $p=0, \ldots, M-1$. By definition of the SSP-RK(2,2) scheme, we have:
\begin{equation*} 
w_{j}^{n,p, (1)}= w_{j}^{n,p, (0)} + \frac{\Delta t}{M}  L_{j}(\bw^{n,p, (0)}),
\end{equation*}
As $L_{j}(\bw^{n,p, (0)}) = L_{j}(\bw^{n,(0)}) + O(\Delta t) $, we deduce from \eqref{eq:pRK2-s1} that
\begin{equation*}
w_{j}^{n,p, (1)}= w_{j}^{n,p, (0)} + \frac{\Delta t}{M} L_{j}(\bw^{n,(0)}) + O(\Delta t^{2}),
\end{equation*}
or equivalently via \eqref{eq:pRK2-s0b}
\begin{equation} \label{eq:pRK2-s1}
w_{j}^{n,p, (1)}= w_{j}^{n,p, (0)} + \frac{p+1}{M} \left (w_{j}^{n, (1)} - w_{j}^{n, (0)} \right ) + O(\Delta t^{2}). 
\end{equation}
It is clear from \eqref{eq:pRK2-s0b} and \eqref{eq:pRK2-s1} that the proposed predictor gives second order accurate approximations of the solutions at intermediate time levels $t^{n,p}$, for $p=0, \ldots, M-1$. 
\subsection{Predictor for the SSP-RK(3,3) scheme}
As in the second order case, we approximate $w_{j}^{n,p, (0)}$ by truncating \eqref{aTaylor}, but now to the third term:
\begin{equation} \label{eq:pRK3-s0}
w_{j}^{n,p, (0)} = w_{j}^{n, (0)} + \frac{p\Delta t}{M} \partial_{t} w_{j}^{n, (0)} + \frac{1}{2}\left (\frac{p\Delta t}{M}\right )^{2} \partial_{tt} w_{j}^{n, (0)}  + O(\Delta t_{p}^{3}), \quad p=0, 1, \ldots, M-1.
\end{equation}
The time derivatives are computed from the solutions at stage 1 and stage 2 of SSP-RK(3,3) with a coarse time step:
\begin{align}
w_{j}^{n, (1)}&= w_{j}^{n,(0)} + \Delta t L_{j}(\bw^{n,(0)}), \vspace{3pt}\\
w_{j}^{n, (2)}&= \frac{3}{4}w_{j}^{n,(0)}  + \frac{1}{4}w_{j}^{n,(1)} + \frac{1}{4}\Delta t L_{j}(\bw^{n,(1)}). \label{eq:RK3-s2}
\end{align}
The first time derivative can be obtained as in \eqref{eq:Ut}, for the second time derivative, by the chain rule we deduce from \eqref{eq:autoj} that
\begin{equation*}
 \partial_{tt} w_{j} = \nabla L_{j}(\bw) \cdot \partial_{t} \bw,
 \end{equation*} 
and we will compute the right-hand side by using \eqref{eq:RK3-s2}. In particular, by performing first order Taylor expansion, we obtain:
\begin{equation*}
L_{j}(\bw^{n,(1)})  =  L_{j}(\bw^{n,(0)})+ \nabla L_{j}(\bw^{n,(0)}) \cdot (\bw^{n,(1)}-\bw^{n,(0)}) + O(\Delta t^{2}).
\end{equation*}
Substituting this into \eqref{eq:RK3-s2} yields
\begin{equation*}
w_{j}^{n, (2)}= \frac{3}{4} w_{j}^{n,(0)}  + \frac{1}{4} w_{j} ^{n,(1)} + \frac{1}{4} \Delta t L_{j}(\bw^{n,(0)}) + \frac{1}{4} \Delta t^{2} \, \nabla L_{j}(\bw^{n,(0)}) \cdot \partial_{t} (\bw^{n,(0)})  + O(\Delta t^{3}),
\end{equation*}
from which we deduce that
\begin{equation*}
\begin{array}{ll}
\Delta t^{2} \nabla L_{j}(\bw^{n,(0)}) \cdot \partial_{t} (\bw^{n,(0)}) & = 4w_{j}^{n, (2)} - 3w_{j}^{n,(0)}  -w_{j}^{n,(1)} -\Delta t L_{j}(\bw^{n,(0)}) + O(\Delta t^{3}) \\
& = 4w_{j}^{n, (2)} - 2w_{j}^{n,(0)}  -2w_{j}^{n,(1)} + O(\Delta t^{3}),
\end{array}
\end{equation*}
where the last equality is obtained by \eqref{eq:Ut}. Inserting this and \eqref{eq:Ut} into \eqref{eq:pRK3-s0}, we arrive at
\begin{equation*}
w_{j}^{n,p, (0)} = w_{j}^{n, (0)} + \frac{p}{M} (w_{j}^{n,(1)}-w_{j}^{n,(0)} ) + \frac{p^{2}}{M^{2}}\left (2w_{j}^{n, (2)} - w_{j}^{n,(0)}  -w_{j}^{n,(1)})\right )+ O(\Delta t^{3}),
\end{equation*}
for $p=0, 1, \ldots, M-1$. Similarly, we can approximate $w_{j}^{n,p, (1)}$ and $w_{j}^{n,p, (2)}$, the solutions at stage 1 and stage 2 of SSP-RK(3,3) as follows:
\begin{align*}
& w_{j}^{n,p, (1)}= w_{j}^{n,p,(0)} + \frac{\Delta t}{M} L_{j}(\bw_{j}^{n,p,(0)}) \\
&= w_{j}^{n,p,(0)} + \frac{\Delta t}{M} \left (L_{j}(\bw^{n,(0)}) + \nabla L_{j}(\bw^{n,(0)}) \cdot (\bw^{n,p,(0)}-\bw^{n,(0)})+ O(\Delta t^{2})\right )\\
& = w_{j}^{n,p,(0)} + \frac{\Delta t}{M} L_{j}(\bw_{j}^{n,(0)}) + \frac{\Delta t}{M} \nabla L_{j}(\bw^{n,(0)}) \cdot  \frac{p\Delta t}{M} \partial_{t}\bw^{n,(0)} + O(\Delta t^{3}) \\
& = w_{j}^{n,(0)} + \frac{p+1}{M} (w_{j}^{n,(1)}-w_{j}^{n,(0)} ) + \frac{p(p+2)}{M^{2}} \left (2w_{j}^{n, (2)} - w_{j}^{n,(0)}  -w_{j}^{n,(1)})\right ) + O(\Delta t^{3}),
\end{align*}
and 
\begin{align*}
& w_{j}^{n,p, (2)}= \frac{3}{4}w_{j}^{n,p,(0)} + \frac{1}{4}w_{j}^{n,p, (1)} +\frac{1}{4} \frac{\Delta t}{M} L_{j}(\bw^{n,p,(1)}) \\
&= \frac{3}{4}w_{j}^{n,p,(0)} + \frac{1}{4}w_{j}^{n,p, (1)} +\frac{1}{4}\frac{\Delta t}{M} \left (L_{j}(\bw^{n,(0)}) + \nabla L_{j}(\bw^{n,(0)}) \cdot  (\bw^{n,p,(1)}-\bw^{n,(0)})+ O(\Delta t^{2})\right ) \\
& = w_{j}^{n,p,(0)} + \frac{\Delta t}{M} L_{j}(\bw^{n,(0)}) + \frac{\Delta t}{M} \nabla L_{j}(\bw^{n,(0)}) \cdot  \frac{(p+1)\Delta t}{M} \, \partial_{t}\bw^{n,(0)} + O(\Delta t^{3}) \\
& = w_{j}^{n,(0)} + \frac{2p+1}{2M} (w_{j}^{n,(1)}-w_{j}^{n,(0)} ) + \frac{2p^{2}+2p+1}{2M^{2}} \left (2w_{j}^{n, (2)} - w_{j}^{n,(0)}  -w_{j}^{n,(1)})\right ) + O(\Delta t^{3}).
\end{align*}

\subsection{Predictor for the SSP-RK(5,4) scheme} \label{subsec:pRK4} Again, we approximate $w_{j}^{n,p, (0)}$ by truncating \eqref{aTaylor}, now to the fourth term:
\begin{equation} \label{eq:pRK4-s0}
w_{j}^{n,p, (0)} = w_{j}^{n, (0)} + \frac{p\Delta t}{M} \partial_{t} w_{j}^{n, (0)} + \frac{1}{2}\left (\frac{p\Delta t}{M}\right )^{2} \partial_{tt} w_{j}^{n, (0)}  + \frac{1}{6}\left (\frac{p\Delta t}{M}\right )^{3} \partial_{ttt} w_{j}^{n, (0)}+ O(\Delta t_{p}^{4}),
\end{equation}
for $p=0, 1, \ldots, M-1$. As for the second and third order cases, we approximate the time derivatives 
\begin{align}
\partial_{t} w_{j}^{n, (0)}&=L_{j}(\bw^{n,(0)}), \quad \partial_{tt} w_{j}^{n,(0)}=\nabla L_{j} (\bw^{n,(0)}) \cdot \partial_{t} \bw^{n,(0)}, \vspace{4pt} \label{eq:deri2t}\\
\partial_{ttt} w_{j}^{n, (0)} &= (\partial_{t} \bw^{n,(0)})^{T} \, \bH_{L_{j}}(\bw^{n,(0)}) \,  \partial_{t} \bw^{n,(0)}  + \nabla L_{j}(\bw^{n,(0)} ) \cdot \partial_{tt} \bw^{n,(0)},  \label{eq:deri3t}
\end{align}
by using the solution the first four stages of SSP-RK(5,4) with a coarse time step: 
\begin{align}
w_{j}^{n, (1)}&= \alpha_{10} w_{j}^{n,(0)} + \beta_{10} \Delta t L_{j}(\bw^{n,(0)}), \label{eq:RK4-s1} \vspace{3pt}\\
w_{j}^{n, (2)}&= \alpha_{20} w_{j}^{n,(0)} + \alpha_{21} w_{j}^{n,(1)} + \beta_{21} \Delta t L_{j}(\bw^{n,(1)}),\label{eq:RK4-s2} \vspace{3pt}\\
w_{j}^{n, (3)}&= \alpha_{30} w_{j}^{n,(0)} + \beta_{30} \Delta t L_{j}(\bw^{n,(0)}) + \alpha_{32} w_{j}^{n,(2)} + \beta_{32} \Delta t L_{j}(\bw^{n,(2)}), \label{eq:RK4-s3} \vspace{3pt}\\
w_{j}^{n, (4)}&= \alpha_{40} w_{j}^{n,(0)} + \beta_{40} \Delta t L_{j}(\bw^{n,(0)}) + \alpha_{41} w_{j}^{n,(1)} + \beta_{41} \Delta t L_{j}(\bw^{n,(1)}) \notag \\ 
& \hspace{1cm} + \alpha_{43} w_{j}^{n,(3)} + \beta_{43} \Delta t L_{j}(\bw^{n,(3)}). \label{eq:RK4-s4}  \vspace{-0.2cm}
\end{align}
Denote by  \vspace{-0.2cm}
\begin{equation} \label{eq:rk4tstages}
\Delta t^{n,(i)}= \gamma^{(i)} \Delta t, \; i=0,1,2,3, \vspace{-0.2cm}
\end{equation}
with  \vspace{-0.2cm}
\begin{equation} \label{eq:gamma}
\gamma^{(0)}=0, \quad \gamma^{(1)}=\beta_{10}, \quad \gamma^{(2)}=\alpha_{21}\gamma^{(1)}+\beta_{21}, \quad \gamma^{(3)}=\alpha_{32}\gamma^{(2)} + \beta_{32}+\beta_{30}. \vspace{-0.2cm}
\end{equation}
From \eqref{eq:RK4-s1} we have
\begin{equation*}
\Delta t \, \partial_{t} w_{j}^{n,(0)} = \Delta t \, L_{j}(\bw^{n,(0)}) =\frac{1}{\beta_{10}} \left (w_{j}^{n, (1)}-\alpha_{10} w_{j}^{n,(0)}\right ).
\end{equation*}
Next, we approximate the flux by Taylor expansion with $O(\Delta t^{3})$ truncated error:
\begin{align} 
L_{j}(\bw^{n,(1)}) &=L_{j}(\bw^{n,(0)}) + \nabla L_{j} (\bw^{n,(0)}) \cdot (\bw^{n,(1)}-\bw^{n,(0)}) \notag \\
& \hspace{0.4cm} + \frac{1}{2} (\bw^{n,(1)}-\bw^{n,(0)}) \, \bH_{L_{j}}(\bw^{n,(0)}) \,  (\bw^{n,(1)}-\bw^{n,(0)}) + O(\Delta t^{3}) \nonumber\\
& = L_{j}(\bw^{n,(0)}) +  \nabla L_{j} (\bw^{n,(0)}) \cdot \left (\Delta t^{n,(1)}  \, \partial_{t} \bw^{n,(0)} + \frac{(\Delta t^{n,(1)})^2}{2} \, \partial_{tt} \bw^{n,(0)}\right ) \nonumber\\
& \hspace{0.2cm}+ \frac{(\Delta t^{n,(1)})^2}{2} \, (\partial_{t} \bw^{n,(0)})^{T} \, \bH_{L_{j}}(\bw^{n,(0)}) \,  \partial_{t} \bw^{n,(0)}  + O(\Delta t^{3}), \nonumber \\
& =  L_{j}(\bw^{n,(0)}) +  \Delta t^{n,(1)}  \, \partial_{tt} w_{j}^{n,(0)}  + \frac{(\Delta t^{n,(1)})^2}{2} \,   \partial_{ttt} w_{j}^{n,(0)}  + O(\Delta t^{3}), \label{eq:Lappr1}
\end{align}
in which $\Delta t^{n,(1)}$ is defined in \eqref{eq:rk4tstages} and the last equality is obtained by substituting the derivatives in time \eqref{eq:deri2t}-\eqref{eq:deri3t}. Similarly, \vspace{-0.1cm}
\begin{align}
L_{j}(\bw^{n,(2)}) &=L_{j}(\bw^{n,(0)}) + \Delta t^{n,(2)} \, \partial_{tt} w_{j}^{n,(0)}+ \frac{\left (\Delta t^{n,(2)}\right )^{2}}{2} \, \partial_{ttt} w_{j}^{n,(0)}+ O(\Delta t^{3}),  \label{eq:Lappr2}
\end{align}
and
\begin{align}
L_{j}(\bw^{n,(3)}) &=L_{j}(\bw^{n,(0)}) + \Delta t^{n,(3)} \, \partial_{tt} w_{j}^{n,(0)}+ \frac{\left (\Delta t^{n,(3)}\right )^{2}}{2} \, \partial_{ttt} w_{j}^{n,(0)}+ O(\Delta t^{3}).  \label{eq:Lappr3}
\end{align}
For the fourth order SSP-RK scheme, the number of stages is larger than the order of the scheme. Consequently, we can compute different approximations of the time derivatives $\partial_{tt} w_{j}^{n,(0)}$ and $\partial_{ttt} w_{j}^{n,(0)}$ using either equations \eqref{eq:Lappr1}-\eqref{eq:Lappr2} or \eqref{eq:Lappr2}-\eqref{eq:Lappr3}.  In particular, if we substitute the equations \eqref{eq:Lappr1}-\eqref{eq:Lappr2} into \eqref{eq:RK4-s2}-\eqref{eq:RK4-s3}, we obtain the following system for $\partial_{tt} w_{j}^{n,(0)} $ and $\partial_{ttt} w_{j}^{n,(0)}$:
\begin{align}
w_{j}^{n,(2)} &= A_{j} + \beta_{21}\beta_{10} \Delta t^{2} \partial_{tt} w_{j}^{n,(0)} + \beta_{21}\beta_{10}^{2} \frac{\Delta t^{3}}{2} \partial_{ttt} w_{j}^{n,(0)}+ O(\Delta t^{4}), \label{eq:lin1}\vspace{4pt}\\
w_{j}^{n,(3)} & = B_{j} + \beta_{32} (\alpha_{21}\beta_{10} + \beta_{21}) \Delta t^{2}  \partial_{tt} w_{j}^{n,(0)}+ \beta_{32}(\alpha_{21}\beta_{10} + \beta_{21}) ^{2} \frac{\Delta t^{3}  }{2} \partial_{ttt} w_{j}^{n,(0)} \nonumber \\
& \hspace{8cm}  + O(\Delta t^{4}), \label{eq:lin2} \vspace{-0.2cm}
\end{align}
where \vspace{-0.2cm}
\begin{align*}
A_{j}&=\alpha_{20} w_{j}^{n,(0)} + \alpha_{21} w_{j}^{n,(1)} + \frac{\beta_{21}}{\beta_{10}} \left (w_{j}^{n, (1)}-\alpha_{10} w_{j}^{n,(0)}\right ), \\
B_{j}&= \alpha_{30} w_{j}^{n,(0)} + \alpha_{32} w_{j}^{n,(2)} + \frac{(\beta_{30}+\beta_{32}) }{\beta_{10}} \left (w_{j}^{n, (1)}-\alpha_{10} w_{j}^{n,(0)}\right ).
\end{align*}
By solving \eqref{eq:lin1}-\eqref{eq:lin2}, we can compute fourth order approximations of $\partial_{tt} w_{j}^{n,(0)}$ and $\partial_{ttt} w_{j}^{n,(0)}$, denoted by $\partial_{tt} \overline{w}_{j}^{n,(0)}$ and $\partial_{ttt} \overline{w}_{j}^{n,(0)}$, as linear combinations of the solutions at different stages with a coarse time step $w_{j}^{n,(0)}, \, w_{j}^{n,(1)}, \, w_{j}^{n,(2)}$ and $w_{j}^{n,(3)}$: 
\begin{equation} \label{eq:4thapprox1}
\begin{array}{ll}
\partial_{tt} \overline{w}_{j}^{n,(0)}&=\partial_{tt} \overline{w}_{j}^{n,(0)}(w_{j}^{n,(0)}, \, w_{j}^{n,(1)}, \, w_{j}^{n,(2)}, \, w_{j}^{n,(3)}), \vspace{3pt}\\
\partial_{ttt} \overline{w}_{j}^{n,(0)}&=\partial_{ttt} \overline{w}_{j}^{n,(0)}(w_{j}^{n,(0)}, \, w_{j}^{n,(1)}, \, w_{j}^{n,(2)}, \, w_{j}^{n,(3)}).
\end{array}
\end{equation}
Similarly, we can substitute the equations \eqref{eq:Lappr2}-\eqref{eq:Lappr3} into \eqref{eq:RK4-s3}-\eqref{eq:RK4-s4} to obtain alternative approximations of $\partial_{tt} w_{j}^{n,(0)}$ and $\partial_{ttt} w_{j}^{n,(0)}$, denoted by $\partial_{tt} \overline{\overline{w}}_{j}^{n,(0)}$ and $\partial_{ttt} \overline{\overline{w}}_{j}^{n,(0)}$, as linear combinations of the solutions $w_{j}^{n,(0)}, \, w_{j}^{n,(1)}, \, w_{j}^{n,(3)}$ and $w_{j}^{n,(4)}$: 
\begin{equation} \label{eq:4thapprox2}
\begin{array}{ll}
\partial_{tt} \overline{\overline{w}}_{j}^{n,(0)}&=\partial_{tt} \overline{\overline{w}}_{j}^{n,(0)}(w_{j}^{n,(0)}, \, w_{j}^{n,(1)}, \, w_{j}^{n,(3)}, \, w_{j}^{n,(4)}), \vspace{3pt}\\
\partial_{ttt} \overline{\overline{w}}_{j}^{n,(0)}&=\partial_{ttt} \overline{\overline{w}}_{j}^{n,(0)}(w_{j}^{n,(0)}, \, w_{j}^{n,(1)}, \, w_{j}^{n,(3)}, \, w_{j}^{n,(4)}).
\end{array}
\end{equation}
To take into account values at all four stages, we choose the average of \eqref{eq:4thapprox1} and \eqref{eq:4thapprox2} as approximations of $\partial_{tt} w_{j}^{n,(0)}$ and $\partial_{ttt} w_{j}^{n,(0)}$ respectively
and insert them into \eqref{eq:pRK4-s0} to obtain a fourth order approximation of $w_{j}^{n,p,(0)}$. 

Next, we approximate $w_{j}^{n,p, (i)}, \, i=1,2,3, 4$ based on the definition of the SSP-RK(5,4) scheme:
\begin{equation*}
 w_{j}^{n,p, (i)}=\sum_{\nu=0}^{i-1} \alpha_{i\nu} w_{j}^{n,p, (\nu)} + \beta_{i\nu} \frac{\Delta t}{M} L_{j}(\bw^{n,p, (\nu)}), \quad \forall\, i=1,2,3,4. \vspace{-0.2cm}
\end{equation*}
Denote by \vspace{-0.2cm}
\begin{equation*}
\Delta t^{n,p,(i)}= (p+ \gamma^{(i)})\frac{\Delta t}{M}, \; i=0,1,2,3,
\end{equation*}
with $\gamma^{(i)}$ defined in \eqref{eq:gamma}. We approximate $L_{j}(\bw^{n,p,(i)}), \, i=0,1,2,3,$ as in \eqref{eq:Lappr1}-\eqref{eq:Lappr3} and use \eqref{eq:4thapprox1} and \eqref{eq:4thapprox2} to approximate the time derivatives: 
\begin{equation*} \label{eq:approxLp}
L_{j}(\bw^{n,p,(i)}) = L_{j}(\bw^{n,(0)}) + \Delta t^{n,p,(i)} \partial_{tt} w_{j}^{n,(0)} + \frac{1}{2}(\Delta t^{n,p,(i)})^{2} \partial_{ttt} w_{j}^{n,(0)} + O\left (\Delta t^{3}\right ).
\end{equation*}
Using this we can compute $w_{j}^{n,p,(i)},$ $i=1,2,3,4$ with fourth order accuracy in time. 
\begin{remark} \label{rmrk:pred}
By construction, the predictors for SSP-RK$(2,2)$, SSP-RK$(3,3)$ and SSP-RK$(5,4)$ are respectively second, third and fourth order accurate in time. 
\end{remark}
\end{appendices}
\end{document}